\documentclass[12pt,reqno]{amsart}
\usepackage[utf8]{inputenc}

\usepackage{fullpage}

\usepackage{mathrsfs}
\usepackage{hyperref}
\usepackage{cite}
\usepackage{mathtools,amsmath,amssymb}
\usepackage{enumitem}
\usepackage{physics}
\usepackage{comment}

\newtheorem{lem}{Lemma}
\newtheorem{thm}{Theorem}
\newtheorem{prop}{Proposition}
\newtheorem{rmk}{Remark}

\subjclass[2010]{11N25, 11N37, 11A51}
\keywords{Mertens' theorems, Mertens constant, almost primes, prime zeta function}

\title{Higher Mertens constants for almost primes II}
\author[J.\ Bayless]{Jonathan Bayless}

\address{University of Maine at Augusta, 46 University Drive, Augusta, Maine 04330}

\email{jonathan.bayless@maine.edu}

\author[P.\ Kinlaw]{Paul Kinlaw}

\address{Dickinson College, 28 North College Street, PO Box 1773, Carlisle, PA 17013-2896}

\email{paulkinlaw1@gmail.com}

\author[J.\ D.\ Lichtman]{Jared Duker Lichtman}

\address{Mathematical Institute, University of Oxford, Oxford, OX2 6GG, UK}

\email{jared.d.lichtman@gmail.com}

\date{\today}

\begin{document}

\maketitle

\begin{abstract}
For $k\ge1$, let $\mathcal{R}_k(x)$ denote the reciprocal sum up to $x$ of numbers with $k$ prime factors, counted with multiplicity. In prior work, the authors obtained estimates for $\mathcal{R}_k(x)$, extending Mertens' second theorem, as well as a finer-scale estimate for $\mathcal{R}_2(x)$ up to $(\log x)^{-N}$ error for any $N > 0$. In this article, we establish the limiting behavior of the higher Mertens constants from the $\mathcal{R}_2(x)$ estimate. We also extend these results to $\mathcal{R}_3(x)$, and we remark on the general case $k\ge4$.
\end{abstract}

\section{Introduction}

Let $\Omega(n)$ denote the number of prime factors of an integer $n$, counted with multiplicity. In \cite[Theorem 1.1]{HME}, the authors established the asymptotic estimates, for any $k\ge1$,
\begin{equation}\label{Vk}
{\mathcal R}_k(x) := \mathop{\sum_{n\le x}}_{\Omega(n)=k}\frac{1}{n} \ = \ \sum_{j=0}^k\frac{\nu_{k-j}}{j!}(\log_2 x)^j \ + \ O_k\left(\frac{(\log_2 x)^{k-1}}{\log x}\right),
\end{equation}
for explicit constants $\nu_j$. Here $\log_2 x=\log\log x$.
The case $k=1$ follows classically from  Mertens' second theorem,
\begin{equation}\label{eq:mertens}
{\mathcal R}_1(x):=\sum_{p\le x}\frac{1}{p} \ = \ \log_2 x + \beta  + O\left(\frac{1}{\log x}\right),
\end{equation}
where $\beta = \gamma  + \sum_p(1/p + \log(1-1/p))=0.2614\cdots$ is the (Meissel-)Mertens constant. 

When $k=1$, a strong form of the prime number theorem implies \eqref{eq:mertens}  with finer-scale error $O_N\left((\log x)^{-N}\right)$ for any $N > 0$. When $k=2$, \cite[Theorem 1.3]{HME} gives a finer-scale estimate of the same quality.  Specifically, for any $N> 0$, we have
\begin{align}\label{eq:R2finer}
{\mathcal R}_2(x) \, = \ \frac{1}{2}(\log_2 x+\beta)^2+\frac{P(2)-\zeta(2)}{2}+\sum_{1\le j< N}\frac{\alpha_j}{\log^j x}+O_N\left((\log x)^{-N}\right).
\end{align}
Here $\zeta(s)=\sum_n n^{-s}$ and $P(s)=\sum_p p^{-s}$ denote the zeta and prime zeta functions, respectively, and the coefficients $\alpha_j$ are given by
\begin{equation}\label{alphalimit}
\begin{aligned}
\alpha_j & = \lim_{x\to\infty}\frac{1}{j}\left(\frac{\log^j x}{j} - \sum_{p\le x}\frac{\log^j p}{p}\right).
\end{aligned}
\end{equation}

In \cite{HME}, it was conjectured that the constants satisfy $\alpha_j\sim j!\,2^{j}/(2j^2)$. Also see \cite[Lemma 8.3]{HME}. In this article, we prove this asymptotic with quantitative error.

\begin{thm} \label{mainthm}
For any $m\ge 0$, we have
\begin{equation} \label{asym_alpha_j}
\alpha_j = \frac{j!\,2^{j}}{2j^2} \left( 1 + O_m\left( j^{-m}\right) \right). 
\end{equation}
Moreover, assuming the Riemann hypothesis, we may replace the error $O_m\left( j^{-m}\right)$ above by
$(2/3+o(1))\left(3/4\right)^j$ (see \eqref{pilibound}).
\end{thm}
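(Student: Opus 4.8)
\emph{Proof strategy.} I would express $\alpha_j$ in terms of the Taylor coefficients of the prime zeta function $P(s)=\sum_p p^{-s}$ at $s=1$, and then read those coefficients off from the singularities of $P$ --- which, near $s=1$, are governed entirely by the simple pole of $\zeta$ at $s=1$ rather than by its zeros. Concretely, put $G(x)=\sum_{p\le x}\log^j p/p$, so that \eqref{alphalimit} reads $G(x)=\tfrac1j\log^j x-j\alpha_j+o(1)$. For $\Re s>1$, Abel summation gives
\[
(-1)^jP^{(j)}(s)=\sum_{p}\frac{\log^j p}{p^s}=(s-1)\int_1^\infty G(x)\,x^{-s}\,dx,
\]
and inserting the asymptotic for $G$, together with $(s-1)\int_1^\infty x^{-s}\,dx=1$ and $(s-1)\int_1^\infty \log^j x\cdot x^{-s}\,dx=(j-1)!/(s-1)^j$, a standard Abelian theorem yields
\[
\sum_{p}\frac{\log^j p}{p^s}-\frac{(j-1)!}{(s-1)^j}\ \xrightarrow[\ s\to1^+\ ]{}\ -\,j\alpha_j.
\]
On the other hand, taking logarithms in the Euler product and M\"obius--inverting $\log\zeta(s)=\sum_{k\ge1}P(ks)/k$ gives $P(s)=\sum_{k\ge1}\tfrac{\mu(k)}{k}\log\zeta(ks)$, whence
\[
A(s):=P(s)+\log(s-1)=\log\bigl((s-1)\zeta(s)\bigr)+\sum_{k\ge2}\frac{\mu(k)}{k}\log\zeta(ks),
\]
and every term on the right is holomorphic at $s=1$ (as $(s-1)\zeta(s)$ is entire and equals $1$ at $s=1$, while $\zeta(k)\ne0$ for $k\ge2$); so $A$ is holomorphic near $s=1$. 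Since $(-1)^jP^{(j)}(s)-(j-1)!/(s-1)^j=(-1)^jA^{(j)}(s)$, letting $s\to1^+$ gives the exact identity
\[
\alpha_j=\frac{(-1)^{j-1}}{j}\,A^{(j)}(1)=(-1)^{j-1}(j-1)!\;[(s-1)^j]\,A(s).
\]

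\emph{Locating the singularities of $A$.} In the displayed formula for $A$, the singularities closest to $s=1$ all arise from the pole of $\zeta$: for each squarefree $k\ge2$ the term $\tfrac{\mu(k)}{k}\log\zeta(ks)$ has a logarithmic singularity at $s=1/k$ of local shape $-\tfrac{\mu(k)}{k}\log(s-1/k)+(\text{holomorphic})$, at distance $1-1/k$ from $s=1$. Every other singularity --- those of $\log((s-1)\zeta(s))$ at the trivial and nontrivial zeros of $\zeta$, and those of $\log\zeta(ks)$ at $s=\rho/k$ or $s=-2m/k$ --- lies at distance at least $2$ from $s=1$, and (since $\zeta$ has no zeros of height below $14$) every nontrivial zero in fact contributes only at distance exceeding $4$. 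Thus in the disc $|s-1|<4/5$ the only singularities of $A$ are the logarithmic ones at $s=\tfrac12$ and $s=\tfrac13$.

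\emph{Reading off the expansion.} Because $[(s-1)^j]\log(s-1/k)=[(s-1)^j]\log\!\bigl(\tfrac{k-1}{k}+(s-1)\bigr)=\tfrac{(-1)^{j-1}}{j}\bigl(\tfrac{k}{k-1}\bigr)^{j}$ for $j\ge1$, the singularity at $s=\tfrac12$ contributes $\tfrac{(-1)^{j-1}}{j}2^{j-1}$ to $[(s-1)^j]A$, which by the identity above reproduces exactly the main term $\tfrac{j!\,2^j}{2j^2}$; the singularity at $s=\tfrac13$ contributes the next term, of relative size $\tfrac23(3/4)^j$; and writing $A(s)=\tfrac12\log(s-\tfrac12)+\tfrac13\log(s-\tfrac13)+H(s)$ with $H$ holomorphic in $|s-1|<\tfrac45$, Cauchy's estimate for $H$ on the circle $|s-1|=\tfrac45-\varepsilon$ shows the remaining contribution is $o\bigl((3/4)^j\bigr)$. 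Altogether
\[
\alpha_j=\frac{j!\,2^j}{2j^2}\Bigl(1+\tfrac23(3/4)^j+o\bigl((3/4)^j\bigr)\Bigr)=\frac{j!\,2^j}{2j^2}\Bigl(1+\bigl(\tfrac23+o(1)\bigr)(3/4)^j\Bigr),
\]
which gives \eqref{asym_alpha_j} --- the weaker bound $O_m(j^{-m})$ being immediate already from the holomorphy of $A$ on $|s-1|<\tfrac12$ --- while under the Riemann hypothesis the zero-related contributions isolated above can be bounded completely explicitly, producing the stated $(2/3+o(1))(3/4)^j$.

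\textbf{Expected main difficulty.} The crux is the reduction in the first paragraph: verifying rigorously that $P(s)+\log(s-1)$ extends holomorphically across $s=1$ via the stated identity, and justifying the Abelian passage from the genuine $x\to\infty$ limit in \eqref{alphalimit} to the boundary value of the Dirichlet series. (An alternative route runs through $\theta(x)-x$ and the integral $\int_0^\infty\bigl(\sum_{p\le e^u}\log p/p-u-E\bigr)u^{j-2}\,du$, but it requires the same inputs.) Once $A$ is pinned down as an explicit combination of $\log\zeta(ks)$'s, the singularity bookkeeping and the Taylor-coefficient computation are routine.
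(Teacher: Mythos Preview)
Your approach is correct and genuinely different from the paper's. The paper works on the real line: it rewrites $\sum_{p}\log^j p/p$ via the von Mangoldt function, obtains $j\alpha_j=\sum_{k\ge2}k^{j-1}\alpha_{j,k}+(-1)^j(j-1)!\eta_{j-1}$, and then estimates each $\alpha_{j,k}=\sum_p\log^j p/p^k$ by partial summation against $\mathcal{E}(t)=\pi(t)-\mathrm{li}(t)$. The dominant $k=2$ term gives the main asymptotic, the $k=3$ term gives the secondary $(2/3)(3/4)^j$, and the quality of the error is limited by the available bound on $\mathcal{E}$ --- hence RH is invoked for the sharp form. Your complex-analytic route via the Taylor expansion of $A(s)=P(s)+\log(s-1)$ at $s=1$, with the decomposition $A(s)=\log((s-1)\zeta(s))+\sum_{k\ge2}\tfrac{\mu(k)}{k}\log\zeta(ks)$, sidesteps $\mathcal{E}$ entirely.

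In fact your argument is \emph{stronger} than you realize: since the nontrivial zeros of $\zeta$ all have height exceeding $14$, every zero-induced singularity of $A$ lies well outside $|s-1|<4/5$ \emph{unconditionally}. So your computation already yields $(2/3+o(1))(3/4)^j$ without RH; the last clause of your proposal invoking RH is unnecessary.

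Two small corrections to the exposition. First, the claim that ``every other singularity lies at distance at least $2$'' overshoots: the pole-type singularities at $s=1/k$ for squarefree $k\ge5$ sit at distance $1-1/k<1$, and the trivial-zero singularities at $s=-2/k$ can sit as close as $5/3$ (for $k=3$). What is true --- and all you need --- is that none of them enters the \emph{open} disc $|s-1|<4/5$ (with $s=1/5$ on the boundary), so your conclusion about $H$ stands. Second, for the tail $\sum_{k\ge5}\tfrac{\mu(k)}{k}\log\zeta(ks)$ you should record that the series converges absolutely and locally uniformly on $\Re s>0$ (since $\log\zeta(ks)=O(2^{-k\Re s})$), which justifies treating the sum as holomorphic on the disc. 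The Abelian step and the identity $\alpha_j=(-1)^{j-1}(j-1)!\,[(s-1)^j]A(s)$ are exactly the content of the paper's Lemma on $h^{(n)}(0)$, proved there via the same M\"obius formula you use.
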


See Section \ref{compute} for computations illustrating the rate of convergence.  

We will prove Theorem \ref{mainthm} in Section \ref{mainthmsec}.  The proof makes use of the following constants, for $j\ge 0$ and $a > 1$,
\begin{equation}\label{alphajk}
\alpha_{j,a} := (-1)^j P^{(j)}(a) = \sum_p \frac{(\log p)^j}{p^a},
\end{equation}
recalling $P(s) = \sum_p p^{-s}$. Also, for each integer $j\ge 1$, we define $\alpha_{j,1} : = -j\alpha_j$.  We note that the quantity $\alpha_{j,1}$ also appears in \cite{cricsan2021counting} where it is denoted as $B_j$.

Moreover, we extend \eqref{eq:R2finer} to obtain finer-scale asymptotics for $\mathcal{R}_3(x)$.

\begin{thm}\label{R3thm1}
For any $N > 0$, we have
\begin{displaymath}
\begin{aligned}
   \mathcal{R}_3(x) &=  \frac{1}{6}(\log_2 x+\beta)^3+\frac{P(2)-\zeta(2)}{2}(\log_2 x+\beta)+\frac{P(3)+\zeta(3)}{3}\\
   &\quad +\sum_{1\le j< N}\frac{\alpha_j(\log_2 x+\beta)-r_j}{\log^j x} \ + \ O_N\left(\frac{\log_2 x}{(\log x)^N}\right),
\end{aligned}
\end{displaymath}
where
\begin{displaymath}
r_j = \frac{2^{j-1}\alpha_{j,2}}{j} + \sum_{i=1}^{j-1} \left(\frac{\alpha_j}{i}+i\binom{j-1}{i}\frac{\alpha_i\alpha_{j-i}}{2}\right).
\end{displaymath}
\end{thm}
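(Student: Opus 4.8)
The plan is to reduce $\mathcal{R}_3(x)$ to the known estimate \eqref{eq:R2finer} for $\mathcal{R}_2$ and Mertens' theorem \eqref{eq:mertens} for $\mathcal{R}_1$ by peeling off least prime factors, and then to evaluate the resulting prime sums by partial summation from \eqref{eq:mertens} and \eqref{alphalimit}. Writing $P^-(n)$ for the least prime factor of $n$, each $n$ with $\Omega(n)=3$ factors uniquely as $n=qm$ with $q=P^-(n)$, $\Omega(m)=2$, $P^-(m)\ge q$, and then $q\le n^{1/3}\le x^{1/3}$; so
\[
\mathcal{R}_3(x)=\sum_{q\le x^{1/3}}\frac1q\,\mathcal{R}_2^{\ge q}\!\left(\tfrac xq\right),\qquad \mathcal{R}_2^{\ge q}(y):=\sum_{\substack{\Omega(m)=2,\ m\le y\\ P^-(m)\ge q}}\frac1m.
\]
The point of peeling the \emph{least} prime factor is that $x/q\ge x^{2/3}$ is always large, so \eqref{eq:R2finer} applies uniformly and no crude tail estimate is ever needed. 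Peeling again the least prime factor of the complementary $2$-almost-primes gives
\[
\mathcal{R}_2^{\ge q}(y)=\mathcal{R}_2(y)-\sum_{r<q}\frac1r\Big(\mathcal{R}_1(y/r)-\sum_{p<r}\tfrac1p\Big),
\]
in which every argument $x/(qr)$ with $r<q\le x^{1/3}$ is also $\ge x^{1/3}$. Substituting \eqref{eq:R2finer} and the fine form $\mathcal{R}_1(y)=\log_2 y+\beta+O_N(\log^{-N}y)$ of \eqref{eq:mertens} writes $\mathcal{R}_3(x)$ as a fixed combination of a single sum over $q\le x^{1/3}$, a double sum over $r<q\le x^{1/3}$, and a triple sum over $p<r<q\le x^{1/3}$, with weights that are polynomials in $\log_2(\cdot)$, $1/\log(\cdot)$, $\beta$, $P(2)$, $\zeta(2)$ and the $\alpha_i$; the accumulated $\mathcal{R}_2$- and $\mathcal{R}_1$-errors contribute $O_N((\log_2 x)^2/(\log x)^N)$ for every $N$, hence $O_N(\log_2 x/(\log x)^N)$ for every $N$.

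\emph{Evaluating the sums.} In each weight expand $\log_2(x/q)=\log_2 x+\log(1-\tfrac{\log q}{\log x})$ and $(\log(x/q))^{-j}=(\log x)^{-j}(1-\tfrac{\log q}{\log x})^{-j}$ in powers of $\tfrac{\log q}{\log x}$, and likewise in $q,r$ for the double sum; since $\tfrac{\log q}{\log x},\tfrac{\log r}{\log x}\le\tfrac13$ throughout, truncating at depth $O_N(\log_2 x)$ leaves an $O_N((\log x)^{-N})$ tail. Every single sum then reduces to combinations of $\sum_{q\le x^{1/3}}(\log q)^i/q$, which by \eqref{eq:mertens} and \eqref{alphalimit} equals $(\log x)^i/(i\,3^i)-i\,\alpha_i+O_N(\log^{-N}x)$ for $i\ge1$ and $\log_2 x-\log 3+\beta+O_N(\log^{-N}x)$ for $i=0$; the double and triple sums reduce to iterated versions $\sum_{r<q\le x^{1/3}}(\log q)^a(\log r)^b/(qr)$, evaluated by summing the inner variable first (the discrepancy between $\sum_{r<q}(\log r)^b/r$ and $(\log q)^b/b-b\,\alpha_b$ integrates against $(\log q)^a/q$ to an absolutely convergent constant, up to an $O_N(\log^{-N}x)$ tail). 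The binomial coefficients $\binom{j-1}{i}$ and the products $\alpha_i\alpha_{j-i}$ in $r_j$ arise here, the latter partly from the $\alpha_i/\log^i(x/q)$ terms of \eqref{eq:R2finer} expanded in $q$ and partly from the constant $-b\,\alpha_b$ produced when the inner $r$-sum is evaluated.

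\emph{Assembly.} Grouping by dependence on $x$: the terms polynomial in $\log_2 x$ reassemble, after their $\beta$-completions, into $\tfrac16(\log_2 x+\beta)^3+\tfrac{P(2)-\zeta(2)}2(\log_2 x+\beta)$; the $(\log_2 x+\beta)(\log x)^{-j}$ contributions from all three sums cancel down to a single $\alpha_j(\log_2 x+\beta)$; and the constant collapses to $\tfrac{P(3)+\zeta(3)}3$. All of this is dictated by the Dirichlet-series identity $\sum_{\Omega(n)=3}n^{-s}=\tfrac16 P(s)^3+\tfrac12 P(s)P(2s)+\tfrac13 P(3s)$: near $s=1$ the regular factor $P(2+2s)=\sum_k\tfrac{(-2)^k\alpha_{k,2}}{k!}s^k$ supplies the term $\tfrac{2^{j-1}\alpha_{j,2}}{j}$ of $r_j$ (with $2^{j-1}=\tfrac12\cdot 2^j$ from the coefficient $\tfrac12$ of $P(s)P(2s)$), while $\tfrac13 P(3+3s)$, being regular, supplies only the constant $\tfrac{P(3)}3$; the $\zeta(2),\zeta(3)$ pieces are the same regularization artifacts already present in \eqref{eq:R2finer}. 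Reading off the coefficient of $(\log x)^{-j}$ and simplifying the binomial sums yields exactly $\alpha_j(\log_2 x+\beta)-r_j$.

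\emph{Main obstacle.} The analytic input is routine (it is all contained in \eqref{eq:R2finer}, \eqref{eq:mertens}, \eqref{alphalimit}, ultimately the prime number theorem with classical zero-free region). The real work is the exact bookkeeping: keeping the power-series expansions uniform in $q$ and $r$ — which is precisely why the least-prime-factor recursion, rather than the largest, must be used, so that $\tfrac{\log q}{\log x},\tfrac{\log r}{\log x}$ stay bounded away from $1$ — and tracking the numerous constants ($\log 2$, $\log 3$, $\beta$, $P(2)$, $\zeta(2)$, $P(3)$, $\zeta(3)$, and the dilogarithm-type constants from the inner sums) so that they collapse to the stated closed forms. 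The single most delicate point is verifying the precise shape of $r_j$, especially the cross term $i\binom{j-1}{i}\alpha_i\alpha_{j-i}/2$, which receives contributions of opposite sign from the $\alpha$-part of \eqref{eq:R2finer} and from the iterated inner prime sum, and only nets out to the stated form after the binomial identities are applied.
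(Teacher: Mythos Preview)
Your least-prime-factor recursion is a legitimate starting point and is genuinely different from the paper's route. The paper instead uses the symmetric decomposition $\mathcal{R}_3(x)=\tfrac{1}{6}\mathcal{S}_3(x)+\tfrac{1}{2}\sum_{pq^2\le x}(pq^2)^{-1}+\tfrac{1}{3}\sum_{p^3\le x}p^{-3}$, handles $\mathcal{S}_3$ by Tenenbaum's Hankel-contour method (Proposition~\ref{s2s3}), and handles $\sum_{pq^2\le x}$ by partial summation against the convergent tail $\epsilon_2(t)=\sum_{p>t}p^{-2}$ (Proposition~\ref{pq2}). The practical point is that in the paper every expansion \eqref{logexpand}--\eqref{logexpand2} is summed against a weight $q^{-2}$ or smaller, so truncating at \emph{fixed} depth $N$ genuinely costs $O_N((\log x)^{-N})$, and the constant $\alpha_{j,2}=\sum_p(\log p)^j/p^2$ falls out directly with no hidden cancellation.

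Your route carries weight $q^{-1}$ throughout, and this is where the sketch has a real gap. When you expand $\log_2(x/q)$ in powers of $u=\log q/\log x\in[0,\tfrac13]$ and truncate at fixed depth $N$, the tail summed over $q\le x^{1/3}$ is of size $\asymp 3^{-N}\log_2 x$, which does not decay with $x$; the ``main-term'' pieces of the expansion therefore form an \emph{infinite} series (for instance $\sum_{m\ge1}m^{-2}3^{-m}=\text{Li}_2(1/3)$ and its higher analogues), and these must be shown to cancel exactly against the $-\log 3$ from $\log_2 x^{1/3}$, the boundary terms of the double and triple sums, and your unnamed ``absolutely convergent constants'' from the inner $r$-sums. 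Your proposed fix of truncating at depth $O_N(\log_2 x)$ trades this for an unaddressed uniformity problem in the evaluation of $\sum_{q\le x^{1/3}}(\log q)^m/q$ for $m$ growing with $x$. More seriously, nothing in the recursion produces a $q^{-2}$ weight, so the appearance of $\alpha_{j,2}$ in $r_j$ is not explained by your calculation; you invoke the Dirichlet-series identity $\sum_{\Omega(n)=3}n^{-s}=\tfrac16 P(s)^3+\tfrac12 P(s)P(2s)+\tfrac13 P(3s)$ to ``dictate'' the constants, but that identity \emph{is} the paper's decomposition, and reading off coefficients from it is a different argument from the one you outlined. The verification you correctly flag as ``the single most delicate point'' is, on your route, essentially the entire proof, and it is not carried out.
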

In particular, $r_1 = \alpha_{1,2}, r_2 = \alpha_{2,2}+\alpha_2+\alpha_1^2/2$, and $r_3 = 4\alpha_{3,2}/3+3\alpha_3/2+2\alpha_1\alpha_2$.  Moreover, we show that $r_j$ satisfies
\begin{equation}\label{rnestimate}
    r_j = \frac{3\alpha_j}{2}(\log j + O(1)).
\end{equation}

The case $N=1$ of Theorem \ref{R3thm1} already gives a slight refinement of \cite[Theorem 2.3]{HME} (the error terms are improved by a factor of $\log_2 x$) and so the sums over $j < N$ are of most interest. We will prove Theorem \ref{R3thm1} in Section \ref{R3sec}.

The proof of Theorem \ref{R3thm1} employs a result of Tenenbaum \cite[Theorem 1.1]{tenenbaum2016generalized} for a related sum. He showed that for any $N>0$,
\begin{displaymath}
\mathcal{S}_k(x) := \sum_{p_1\cdots p_k\le x}\frac{1}{p_1\cdots p_k} = \sum_{0\le j< N}\frac{S_{k,j}(\log_2 x)}{\log^j x} + O_N\left(\frac{(\log_2 x)^{k-1}}{(\log x)^N}\right)
\end{displaymath}
for certain explicit polynomials $S_{k,j}$. This generalized work \cite{popa2014double,popa2016triple} of Popa, who addressed the cases $k=2$ and $3$ and $N=1$.  We note that Qi--Hu \cite{qi2019multiple} gave an elementary proof of the case $N=1$ of Tenenbaum's result for general $k$.

In principle, one may determine fine-scale asymptotics for $\mathcal{R}_k(x)$ for general $k\ge4$, with greater patience. We give a proof-of-concept for $\mathcal{R}_4(x)$ in Theorem \ref{R4thm} below.

\begin{rmk}
Note the constants $\alpha_{j,2}$ appear in Theorem \ref{R3thm1}. More generally, in the finer-scale estimates for $\mathcal{R}_k(x)$ we find a coefficient of $-(k-1)^{j-1}\alpha_{j,k-1}/j$ contributing to the $1/\log^j x$ term. See Proposition \ref{pqa} below.
\end{rmk}

The method of proof begins from the the fine-scale asymptotic for $\mathcal{S}_k(x)$, strengthening \cite[Proposition 3.3]{HME} as follows. One must evaluate partial sums over primes $p$ involving expressions of the type
\begin{displaymath}
\frac{(\log_2 (x/p^a))^b}{p^a\log^c (x/p^a)}
\end{displaymath}
for integers $b\ge 0$ and $c\ge 0$, with error $\ll_N (\log x)^{-N}$. For this we use the expansions,
\begin{equation}\label{logexpand}
\frac{1}{\log\frac{x}{t^a}} = \sum_{m\ge 1}\frac{(a\log t)^{m-1}}{(\log x)^m} = \sum_{1\le m < N}\frac{(a\log t)^{m-1}}{(\log x)^m}+O_{a,N}\left(\frac{(\log t)^{N} t}{(\log x)^{N}}\right),
\end{equation}
for any integers $a\ge 2$ and $N > 0$, and
\begin{equation}\label{logexpand2}
\log_2\frac{x}{t^a} = \log_2 x - \sum_{m\ge 1}\frac{(a\log t)^m}{m(\log x)^m} = \log_2 x - \sum_{1\le m < N}\frac{(a\log t)^m}{m(\log x)^m}+O_{a,N}\left(\frac{\log^{N} t}{\log^{N}x}\right)
\end{equation}
for all sufficiently large $x$, and these estimates are uniform for all $t$ in the necessary range, say, $t\in [2,(x/2)^{1/a}]$. Then we use these expansions with $t=p$, apply the multinomial theorem to expand the $b^\textnormal{th}$ and $c^\textnormal{th}$ powers, respectively. The partial sums with $t=p$ lead to the coefficients $\alpha_{j,a}$ in \eqref{alphajk}.

In the course of the proof of Theorem \ref{mainthm}, we also obtain some explicit bounds for polylogarithms.  These may be of independent interest.

\section{Preliminary Lemmas}

We will use several results, including the prime number theorem in the form
\begin{equation} \label{pnt}
\mathcal{E}(x):= \pi(x)- \text{li}(x)  \ll x \exp\left( -c\sqrt{\log x}\right)
\end{equation}
for a constant $c>0$, where $\pi(x)$ denotes the prime counting function. (See \cite{TenenbaumBook} for example.)  This implies that for any fixed $A>0$,
\begin{equation} \label{pilibound}
\mathcal{E}(x)\ll_A\frac{x}{\log^A x}.
\end{equation}
Moreover the Riemann hypothesis is equivalent to the bound $\mathcal{E}(x)\ll \sqrt{x}\log x$.  (See for instance \cite[p.\ 70]{lucadekoninck}.)



We recall the definition of the polylogarithm, 
\begin{equation} \label{polylogdef}
\text{Li}_{j}\left( x\right) := \sum_{n=1}^{\infty} \frac{x^n}{n^j},
\end{equation}
which generalizes $\zeta(j)=\text{Li}_j(1)$. In Appendix \ref{polylogsec}, we prove the following bound which is of use in the proof of Theorem \ref{mainthm}.

\begin{lem} \label{Polylog}
For all $1 < x \le M$ and $j \ge 2$,
\begin{displaymath}
\text{\rm Li}_{1-j} \left( \frac{1}{x} \right) \ll_M \frac{(j-1)!}{(\log x)^j}. 
\end{displaymath}
\end{lem}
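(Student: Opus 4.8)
The plan is to exploit the closed form for $\mathrm{Li}_{1-j}$ at negative integer index. For $j \ge 2$ the function $\mathrm{Li}_{1-j}(z)$ is a rational function of $z$; explicitly, writing $n = j-1 \ge 1$, one has the classical identity
\begin{displaymath}
\mathrm{Li}_{-n}(z) = \left(z \frac{d}{dz}\right)^{n} \frac{z}{1-z} = \frac{1}{(1-z)^{n+1}} \sum_{i=0}^{n-1} \left\langle {n \atop i} \right\rangle z^{\,i+1} = \frac{A_n(z)}{(1-z)^{n+1}},
\end{displaymath}
where $\left\langle {n \atop i}\right\rangle$ are the Eulerian numbers and $A_n(z) = \sum_{i} \langle {n \atop i}\rangle z^{i+1}$ is the (shifted) Eulerian polynomial, of degree $n$. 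The key facts are that the coefficients of $A_n$ are nonnegative and sum to $\sum_i \langle {n\atop i}\rangle = n!$. Hence for any real $z \in (0,1)$ we get the clean bound $0 < \mathrm{Li}_{-n}(z) \le A_n(1)/(1-z)^{n+1} = n!/(1-z)^{n+1}$.

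Next I would substitute $z = 1/x$ with $1 < x \le M$. Then $1 - z = 1 - 1/x = (x-1)/x$, so
\begin{displaymath}
\mathrm{Li}_{1-j}(1/x) = \mathrm{Li}_{-(j-1)}(1/x) \le \frac{(j-1)!}{(1-1/x)^{j}} = (j-1)!\left(\frac{x}{x-1}\right)^{j}.
\end{displaymath}
It remains to compare $(x/(x-1))^j$ with $(\log x)^{-j}$; equivalently, to show $(x-1)/x \gg_M \log x$ uniformly for $x \in (1, M]$, i.e. that $h(x) := \log x \cdot \frac{x}{x-1}$ is bounded above on $(1,M]$. As $x \to 1^+$ one has $\log x \sim x - 1$, so $h(x) \to 1$; and $h$ is continuous on $(1, M]$ with a removable singularity at $x=1$, hence bounded by some constant $C(M)$ depending only on $M$ (one can take $C(M) = \sup_{1 < x \le M} h(x)$, finite since $h$ extends continuously to the compact interval $[1,M]$). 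Therefore $x/(x-1) \le C(M)/\log x$, which gives $\mathrm{Li}_{1-j}(1/x) \le (j-1)! \, C(M)^j / (\log x)^j \ll_M (j-1)!/(\log x)^j$, absorbing the harmless factor $C(M)^j$ into the $\ll_M$ (as is standard when the implied constant is allowed to depend on $M$).

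The only mild subtlety — and the step I would be most careful about — is the very last one: the constant $C(M)^j$ grows with $j$, so one must make sure this is acceptable. It is, because the statement only claims $\ll_M$, permitting the implied constant to depend on $M$ in an arbitrary way, including exponentially; the point of the lemma is the factorial-over-logarithm shape in $j$, not uniformity of the constant. (If a version with $C(M)$ not raised to the $j$th power were wanted, one would instead note that near $x = 1$ the dominant contribution comes from the pole, giving the sharper $\mathrm{Li}_{1-j}(1/x) = (j-1)!(1+o(1))/(\log x)^j$ as $x \to 1^+$, but this refinement is not needed here.) Everything else — the Eulerian-polynomial identity, nonnegativity of Eulerian numbers, and their sum being $n!$ — is standard and can be cited or proved in a line by induction using $\mathrm{Li}_{-n}(z) = z \frac{d}{dz}\mathrm{Li}_{-(n-1)}(z)$.
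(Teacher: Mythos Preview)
Your argument has a genuine gap at exactly the step you flagged as a ``mild subtlety.'' The notation $\ll_M$ means there is a constant $C$ depending only on $M$ --- and \emph{not} on $j$ or $x$ --- such that the left side is at most $C$ times the right side for all $j\ge 2$ and all $x\in(1,M]$. Your bound produces the factor $C(M)^j$, which depends on $j$; this cannot be absorbed. Concretely, you have shown $\mathrm{Li}_{1-j}(1/x)\le (j-1)!\,(x/(x-1))^j$, and the ratio of this to the claimed bound $(j-1)!/(\log x)^j$ is $\bigl(x\log x/(x-1)\bigr)^j$. Since $x\log x/(x-1)>1$ for every $x>1$ (for instance it equals $2\log 2\approx 1.39$ at $x=2$, and is about $16$ at $x=e^{16}$), this ratio tends to infinity with $j$. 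So your inequality, while correct, is strictly weaker than the lemma and does not imply it. The lemma is used in the paper precisely for its uniformity in $j$: e.g.\ to show $S_4=o\bigl((3/2)^{j-1}(j-1)!\bigr)$ one applies it at $x=e-\epsilon$, and there a bound of shape $(j-1)!\cdot(1.58)^j$ would be useless.

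The underlying issue is that the crude estimate $A_{j-1}(z)\le (j-1)!$ throws away the smallness of $z=1/x$ when $x$ is bounded away from $1$. The paper's proof avoids this by comparing the series $\mathrm{Li}_{-k}(1/x)=\sum_{m\ge1} m^k x^{-m}$ directly with the integral $I(k,x)=\int_1^\infty t^k x^{-t}\,dt$, which after the substitution $u=t\log x$ becomes an incomplete gamma integral and is bounded above by $k!/(\log x)^{k+1}$ exactly. Euler's summation formula then shows $\bigl|\mathrm{Li}_{-k}(1/x)/I(k,x)-1\bigr|<x/\sqrt{2\pi k}\le M/\sqrt{2\pi}$, giving an implied constant depending only on $M$. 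If you want to rescue the Eulerian-polynomial route, you would need a pointwise bound on $A_{j-1}(1/x)$ that captures its decay in $1/x$, not just $A_{j-1}(1)=(j-1)!$; but at that point the integral comparison is cleaner.
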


In fact, for fixed $x>1$, the upper bound of Lemma \ref{Polylog} is an asymptotic equality. We will give a numerically explicit upper bound for the error term.

We use the following result on the von Mangoldt function $\Lambda(n)$, which is established in \cite{Coffey} via the von Mangoldt explicit formula.

\begin{lem}[Equation (A.2) in \cite{Coffey}] \label{etalemma}
We have
\begin{displaymath}
\lim_{x\to \infty} \left(\frac{\log^j x}{j} - \sum_{n \le x} \Lambda(n) \frac{\log^{j-1}n}{n}\right) = \eta_{j -1}(-1)^j(j-1)!,
\end{displaymath}
where $\eta_j$ are the coefficients in the series
\begin{displaymath}
g(s)=-\frac{\zeta'(s)}{\zeta (s)} -\frac{1}{s -1}=\sum_{j=0}^{\infty} \eta_j (s-1)^j.
\end{displaymath}
\end{lem}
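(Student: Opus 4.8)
The plan is to derive this from the Dirichlet series $-\zeta'(s)/\zeta(s)=\sum_{n}\Lambda(n)n^{-s}$ together with the quantitative prime number theorem \eqref{pnt}; this is in essence Coffey's argument (\cite[Eq.~(A.2)]{Coffey}) via the von Mangoldt explicit formula, but the Dirichlet-series route is perhaps cleaner to sketch. Write $a_n=\Lambda(n)(\log n)^{j-1}/n$ and $A(x)=\sum_{n\le x}a_n$, so that the claim becomes $A(x)=\tfrac{\log^j x}{j}-c_j+o(1)$ with $c_j=(-1)^j(j-1)!\,\eta_{j-1}$.

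First I would show the limit exists. Writing $\psi(x)=\sum_{n\le x}\Lambda(n)=x+R(x)$, with $R(x)\ll x\exp(-c\sqrt{\log x})$ by \eqref{pnt}, partial summation gives
\[
A(x)=\int_{1}^{x}\frac{(\log t)^{j-1}}{t}\,d\psi(t)
=\frac{\log^j x}{j}+\frac{(\log x)^{j-1}}{x}R(x)-\int_{1}^{x}R(t)\,\frac{(\log t)^{j-2}\bigl((j-1)-\log t\bigr)}{t^{2}}\,dt,
\]
the lower boundary term contributing only a fixed constant (and vanishing for $j\ge 2$). The term $\tfrac{(\log x)^{j-1}}{x}R(x)$ tends to $0$, while the last integrand is $O\bigl(t^{-1}(\log t)^{j-1}e^{-c\sqrt{\log t}}\bigr)$, hence absolutely integrable on $[1,\infty)$; thus $A(x)-\tfrac{\log^j x}{j}$ converges to a finite constant, which I name $-c_j$.

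Next I would pin down $c_j$ by a soft Tauberian comparison. For $\Re s>1$, termwise differentiation of $-\zeta'(s)/\zeta(s)=\sum_n\Lambda(n)n^{-s}$ yields
\[
F(s):=\sum_n a_n\,n^{1-s}=(-1)^{j-1}\frac{d^{j-1}}{ds^{j-1}}\Bigl(-\frac{\zeta'(s)}{\zeta(s)}\Bigr).
\]
Substituting $-\zeta'(s)/\zeta(s)=(s-1)^{-1}+g(s)$ with $g(s)=\sum_{k\ge0}\eta_k(s-1)^k$ analytic at $s=1$, and differentiating, I obtain $F(s)=\tfrac{(j-1)!}{(s-1)^j}+(-1)^{j-1}(j-1)!\,\eta_{j-1}+o(1)$ as $s\to1^+$. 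On the other hand, Abel summation gives $F(s)=(s-1)\int_1^\infty A(t)\,t^{-s}\,dt$; inserting the asymptotic $A(t)=\tfrac{\log^j t}{j}-c_j+o(1)$ from the previous step and using the elementary evaluation $\int_1^\infty(\log t)^j\,t^{-s}\,dt=j!\,(s-1)^{-(j+1)}$, I get $F(s)=\tfrac{(j-1)!}{(s-1)^j}-c_j+o(1)$ as $s\to1^+$. Comparing the two expansions forces $c_j=(-1)^j(j-1)!\,\eta_{j-1}$, as desired.

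I expect the delicate point to be this last matching: justifying that the additive constant in the asymptotic for $A(x)$ equals the regularized constant term of $F$ at its simple pole — in effect, the interchange of the limits $x\to\infty$ and $s\to1^+$. This rests on the error in the asymptotic for $A(t)$ being genuinely $o(1)$ (here it is $O(e^{-c\sqrt{\log t}})$, comfortably so), together with the identity $(s-1)\int_1^\infty t^{-s}\,dt=1$, which makes $(s-1)\int_1^\infty o(1)\cdot t^{-s}\,dt\to0$ by dominated convergence. Alternatively one can follow Coffey and feed the von Mangoldt explicit formula for $\psi(t)$ directly into the convergent integral displayed above, at the cost of heavier bookkeeping over the nontrivial zeros of $\zeta$.
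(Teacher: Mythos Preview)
Your argument is correct. Note that the paper does not itself prove Lemma~\ref{etalemma}: it is quoted from Coffey \cite[Eq.~(A.2)]{Coffey} with the remark that it is established there via the von Mangoldt explicit formula. Your route is genuinely different from that approach. Rather than inserting the explicit formula $\psi(t)=t-\sum_\rho t^\rho/\rho-\cdots$ into the partial-summation integral and summing over nontrivial zeros, you use only the PNT bound \eqref{pnt} to show that $A(x)-\tfrac{\log^j x}{j}$ converges, and then identify the limit by matching the constant term of $F(s)=(-1)^{j-1}\bigl(-\zeta'/\zeta\bigr)^{(j-1)}(s)$ at $s=1$ against the Abel representation $F(s)=(s-1)\int_1^\infty A(t)\,t^{-s}\,dt$. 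This is lighter: it avoids all bookkeeping over zeros, and the only analytic input beyond \eqref{pnt} is the regularity of $g$ at $s=1$. The soft Tauberian step you flag as delicate is handled correctly, since the error in $A(t)$ is not merely $o(1)$ but $O\bigl(e^{-c'\sqrt{\log t}}\bigr)$, so $(s-1)\int_1^\infty E(t)\,t^{-s}\,dt\to0$ as $s\to1^+$ exactly as you argue. What the explicit-formula route buys, by contrast, is a closed expression for the constant directly in terms of the zeros of $\zeta$, which is what Coffey exploits in \cite{Coffey}.
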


Note that $g$ has an analytic continuation to the open disk of radius 3 centered at $s = 1$, because $\zeta$ has no zeros on this open disk. 

\begin{lem} \label{dklemma}
Let $k \ge 4$, $n \ge 17$, and $d=(n-1)(k-1)$.  Then $k \le d/12$.
\end{lem}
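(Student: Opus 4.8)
The plan is to unwind the definition $d=(n-1)(k-1)$ and reduce the claimed inequality $k\le d/12$ to an elementary statement in $n$ and $k$, which then follows immediately from the two hypotheses. Concretely, $k\le d/12$ is equivalent to
\[
12k \le (n-1)(k-1).
\]
First I would invoke the hypothesis $n\ge 17$, so that $n-1\ge 16$ and hence
\[
(n-1)(k-1) \ge 16(k-1) = 16k-16.
\]
It therefore suffices to check $16k-16\ge 12k$, i.e.\ $4k\ge 16$, i.e.\ $k\ge 4$ — which is exactly the remaining hypothesis. Chaining the two displays yields $12k\le (n-1)(k-1)=d$, which is the assertion.

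There is no genuine obstacle here; the content is entirely bookkeeping. The one point worth noting is that the bound is sharp: at $(n,k)=(17,4)$ we have $d=16\cdot 3=48$ and $d/12=4=k$, so equality holds and neither hypothesis can be relaxed (to $n\ge 16$ or to $k\ge 3$) without the conclusion failing. Thus the ``hard part'' is only to observe that the constants $17$ and $4$ appearing in the statement are precisely calibrated against the factor $12$. In the write-up I would simply record the two-line chain of inequalities displayed above.
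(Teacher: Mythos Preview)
Your proof is correct and essentially the same as the paper's: both use $n-1\ge 16$ and $k\ge 4$ to reach $12k\le d$. The paper arranges the arithmetic slightly differently (writing $k-1\le d/16$ and $1\le d/48$, then adding), but your direct chain $12k\le 16k-16\le (n-1)(k-1)$ is if anything cleaner.
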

\begin{proof}
First, note that since $d$ is defined as above, $k-1 = d/(n-1) \le d/16.$  Since $d \ge 48$ by the above, $1 \le d/48$, and adding these gives the result.
\end{proof}


We will also use a strong form of Mertens' first theorem which follows from the prime number theorem.

\begin{lem} \label{Mertens1}
We have
\begin{displaymath}
\sum_{p \le x} \frac{\log p}{p} = \log x -\alpha_1 + O\left( \frac{1}{\log^2 x} \right)
\end{displaymath}
where $\alpha_1 = \gamma +\sum_p \log p/p(p-1) = 1.332582\ldots$.
\end{lem}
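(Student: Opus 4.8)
The plan is to derive the estimate from the prime number theorem by partial summation, after passing from primes to the von Mangoldt function $\Lambda$. The bound \eqref{pilibound}, together with the standard passage between the error terms in the prime number theorem for $\pi(x)$, $\theta(x)$ and $\psi(x)=\sum_{n\le x}\Lambda(n)$, gives $\psi(x)=x+O_A\big(x/\log^A x\big)$ for every fixed $A>0$; fix $A=3$. Partial summation (Abel summation with $A(t)=\psi(t)$) then yields
\[
\sum_{n\le x}\frac{\Lambda(n)}{n}=\frac{\psi(x)}{x}+\int_2^x\frac{\psi(t)}{t^2}\,dt=\log x+C_1+O\!\Big(\frac1{\log^2 x}\Big),\qquad C_1:=1-\log 2+\int_2^\infty\frac{\psi(t)-t}{t^2}\,dt,
\]
the integral defining $C_1$ being absolutely convergent since $\psi(t)-t\ll t/\log^3 t$, and the same bound giving both $\psi(x)/x-1\ll\log^{-3}x$ and $\int_x^\infty\big(\psi(t)-t\big)t^{-2}\,dt\ll\int_x^\infty\big(t\log^3 t\big)^{-1}dt\ll\log^{-2}x$.

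To identify $C_1$, I would apply Lemma \ref{etalemma} in the case $j=1$, which reads $\sum_{n\le x}\Lambda(n)/n=\log x+\eta_0+o(1)$; comparing with the display above forces $C_1=\eta_0$. From the Laurent expansion $\zeta(s)=(s-1)^{-1}+\gamma+O(s-1)$ one gets $-\zeta'(s)/\zeta(s)=(s-1)^{-1}-\gamma+O(s-1)$, hence $\eta_0=g(1)=-\gamma$, so that $\sum_{n\le x}\Lambda(n)/n=\log x-\gamma+O(\log^{-2}x)$. (This refinement of Mertens' first theorem is of course classical, and one could cite it directly instead.)

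Finally I would remove the higher prime powers. Since $\Lambda(p^k)=\log p$,
\[
\sum_{n\le x}\frac{\Lambda(n)}{n}-\sum_{p\le x}\frac{\log p}{p}=\sum_{\substack{p^k\le x\\ k\ge 2}}\frac{\log p}{p^k}=\sum_p\frac{\log p}{p(p-1)}-\sum_{\substack{p^k> x\\ k\ge 2}}\frac{\log p}{p^k},
\]
and the last tail is negligible: its terms with $p>\sqrt x$ sum to $\ll\sum_{n>\sqrt x}n^{-2}\log n\ll\log x/\sqrt x$, while for each $p\le\sqrt x$ the powers $p^k>x$ satisfy $p^{-k}<1/x$ and so contribute $\ll 1/x$, whence those terms sum to $\ll x^{-1}\sum_{p\le\sqrt x}\log p\ll x^{-1/2}$. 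Subtracting now gives $\sum_{p\le x}\log p/p=\log x-\gamma-\sum_p\log p/(p(p-1))+O(\log^{-2}x)=\log x-\alpha_1+O(\log^{-2}x)$, as claimed.

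There is no real obstacle here: the argument is routine partial summation and bookkeeping. The only point requiring care is the evaluation of the additive constant — matching the constant produced by partial summation with $-\gamma$ via Lemma \ref{etalemma}, and keeping track of the prime-power correction $\sum_p\log p/(p(p-1))$ that separates $\sum_{p\le x}\log p/p$ from $\sum_{n\le x}\Lambda(n)/n$; it is exactly this term that turns $\gamma$ into $\alpha_1$.
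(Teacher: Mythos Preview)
Your proof is correct. The paper does not actually prove this lemma: it is stated as a standard strong form of Mertens' first theorem ``which follows from the prime number theorem,'' with only a parenthetical remark identifying the constant $\alpha_1$. Your argument---partial summation applied to $\psi(x)=x+O_A(x/\log^A x)$, identification of the additive constant as $\eta_0=-\gamma$ via Lemma~\ref{etalemma} (equivalently, via the Laurent expansion of $\zeta$ at $s=1$), and then subtraction of the convergent prime-power tail $\sum_p\log p/(p(p-1))$---is exactly the natural way to supply such a proof, and all the error bounds are handled correctly. One cosmetic point: when you say the terms with $p\le\sqrt{x}$ ``contribute $\ll 1/x$,'' you mean the geometric tail $\sum_{p^k>x}p^{-k}\ll 1/x$, after which the factor $\log p$ is reinstated in the final sum $x^{-1}\sum_{p\le\sqrt{x}}\log p\ll x^{-1/2}$; this is clear from context and the conclusion is right.
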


(The constant $\alpha_1$ is also given by \eqref{alphajk} above.) The following proposition gives the asymptotic behavior of $\alpha_{j,k}$ for fixed $k$ as $j\to \infty$.

\begin{prop}\label{alphajkestimate}
Let $k>1$.  For each $m\ge 0$ we have
\begin{displaymath}
\alpha_{j,k} = \frac{(j-1)!}{(k-1)^j}(1+O_{k,m}(j^{-m})).
\end{displaymath}
In particular, for fixed $k>1$ we have $\alpha_{j,k}\sim (j-1)!/(k-1)^j,~~~(j\to\infty)$.
\end{prop}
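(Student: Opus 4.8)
The plan is to exploit the representation $\alpha_{j,k} = (-1)^j P^{(j)}(k)$ from \eqref{alphajk}, together with the fact that $s=k$ sits at distance exactly $k-1$ from the logarithmic singularity of the prime zeta function at $s=1$, every other obstruction to analyticity being strictly farther from $k$. From the Euler product, the classical identity $\log\zeta(s) = \sum_{n\ge1}P(ns)/n$ (for $\Re s>1$) and M\"obius inversion give $P(s) = \sum_{n\ge1}\frac{\mu(n)}{n}\log\zeta(ns)$. Peeling off the $n=1$ term and writing $\log\zeta(s) = \log\frac{1}{s-1} + \log\bigl((s-1)\zeta(s)\bigr)$, we obtain
\[
P(s) \ = \ \log\tfrac{1}{s-1} \ + \ h(s), \qquad h(s) \ := \ \log\bigl((s-1)\zeta(s)\bigr) \ + \ \sum_{n\ge2}\tfrac{\mu(n)}{n}\log\zeta(ns),
\]
where $\frac{d^j}{ds^j}\log\frac{1}{s-1} = (-1)^j(j-1)!\,(s-1)^{-j}$ will supply the main term, so that everything reduces to bounding $h^{(j)}(k)$.

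The crux is to show $h$ is analytic on some open disk $D(k,\rho) = \{\,|s-k|<\rho\,\}$ with $\rho>k-1$. For $|s-k|<k-\tfrac12$ one has $\Re(ns) = n\,\Re s > n/2 \ge 1$ for every $n\ge2$, so there each $\log\zeta(ns)$ is given by its absolutely convergent series $-\sum_p\log(1-p^{-ns})$, and $\sum_{n\ge2}\frac{\mu(n)}{n}\log\zeta(ns)$ converges locally uniformly; hence this tail is analytic on $D(k,k-\tfrac12)$. The term $\log\bigl((s-1)\zeta(s)\bigr)$ is analytic on any disk about $k$ avoiding the zeros of $\zeta$, since $(s-1)\zeta(s)$ is entire, equals $1$ at $s=1$, and has the same zeros as $\zeta$. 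Now every nontrivial zero $\beta+i\gamma$ satisfies $|k-(\beta+i\gamma)| \ge k-\beta > k-1$ because $\beta<1$; moreover this stays strictly above $k-1$ uniformly over the zeros, as only finitely many zeros have bounded height and zeros of large height are far from $k$, while the trivial zeros $-2,-4,\dots$ lie at distance $\ge k+2$. Thus, taking $\rho$ to be the minimum of $k-\tfrac12$ and the distance from $k$ to the nearest zero of $\zeta$, we have $\rho>k-1$ and $h$ analytic on $D(k,\rho)$.

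It remains to assemble the pieces. Differentiating the decomposition $j$ times and evaluating at $s=k$,
\[
\alpha_{j,k} \ = \ (-1)^j P^{(j)}(k) \ = \ \frac{(j-1)!}{(k-1)^j} \ + \ (-1)^j h^{(j)}(k),
\]
and a Cauchy estimate on the circle $|s-k|=\rho'$, for any fixed $\rho'\in(k-1,\rho)$, gives $|h^{(j)}(k)| \le j!\,M/(\rho')^j$, where $M := \max_{|s-k|=\rho'}|h(s)| < \infty$. Hence
\[
\alpha_{j,k} \ = \ \frac{(j-1)!}{(k-1)^j}\left(1 + O_k\!\left(j\bigl(\tfrac{k-1}{\rho'}\bigr)^{j}\right)\right),
\]
and since $0<\frac{k-1}{\rho'}<1$ this error decays exponentially in $j$; in particular it is $O_{k,m}(j^{-m})$ for every $m\ge0$, which proves the proposition (indeed, with a stronger error term than stated). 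The one genuinely delicate point is the radius bound $\rho>k-1$ — the assertion that there is a gap between the logarithmic behaviour of $P$ at $s=1$ and the next obstruction (the branch point at $s=\tfrac12$, or a zero of $\zeta$); the remaining ingredients are just Leibniz's rule and Cauchy's estimate. An alternative, more elementary route evaluates $\alpha_{j,k}$ as a Stieltjes integral against $\theta(t)=\sum_{p\le t}\log p$ and substitutes $\theta(t)=t+O(t\,e^{-c\sqrt{\log t}})$, the main term yielding $(k-1)^{-j}$ times an upper incomplete Gamma value $=(j-1)!\,(1+O(c^j/(j-1)!))$ and the error $\ll \frac{(j-1)!}{(k-1)^j}e^{-c'\sqrt{j}}$; but the contour argument above is cleaner and gives a better error.
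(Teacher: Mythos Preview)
Your proof is correct and takes a genuinely different route from the paper's. The paper argues by partial summation against $\pi(t)$: writing $\alpha_{j,k}=\int_2^\infty (\log t)^j t^{-k}\,d\pi(t)$, splitting $\pi(t)=\mathrm{li}(t)+\mathcal{E}(t)$, and evaluating the $\mathrm{li}$-piece as a Gamma integral to obtain $(j-1)!/(k-1)^j$, while the $\mathcal{E}$-piece is bounded via the prime number theorem in the form $\mathcal{E}(t)\ll_A t/\log^A t$, yielding the stated $O_{k,m}(j^{-m})$ for each fixed $m$. Your argument instead isolates the logarithmic singularity of $P(s)$ at $s=1$ and controls the remainder $h$ by Cauchy's inequality on a disk of radius $\rho>k-1$; the key observation, which you handle carefully, is that the nearest singularity of $h$ (a zero of $\zeta$, or the branch structure at $\Re s=\tfrac12$) lies strictly farther than $k-1$ from $s=k$. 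This buys you an unconditionally exponential error $O_k\big(j\,\theta_k^{\,j}\big)$ with $\theta_k=(k-1)/\rho'<1$, strictly stronger than the polynomial saving the paper's real-variable method produces without RH. The trade-off is that the paper's approach is more self-contained within the elementary estimates already set up (Lemma~\ref{Mertens1}, \eqref{pilibound}), whereas yours imports the Euler product/M\"obius inversion identity for $P(s)$ and the zero-free line $\Re s=1$; but both are standard, and your route is arguably the more natural explanation of \emph{why} the main term is $(j-1)!/(k-1)^j$.
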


\begin{proof}
Letting $f(t)=\log^j t/t^k$, we apply partial summation with the prime number theorem (in the form of \eqref{pnt}) and integration by parts to obtain
\begin{equation}\label{errorbound}
\alpha_{j,k} = \lim_{x\to \infty}\sum_{p\le x}\frac{\log^j p}{p^k} = f(2)\text{li}(2)+\int_2^\infty \frac{\log^{j-1}t}{t^k}dt - \int_2^\infty f'(t)\mathcal{E}(t)dt,
\end{equation}
where $\mathcal{E}(t) = \pi(t)-\text{li}(t)$.  The first two terms sum to 
\begin{displaymath}
O(1)+\int_1^\infty \frac{\log^{j-1}t}{t^k}dt,
\end{displaymath}
and this integral evaluates to $(j-1)!/(k-1)^j$ after substituting $u=(k-1)\log t$ and recognizing the integral as that of the gamma function.  It remains to address the integral involving the error term.  This is handled in a similar way using \eqref{pilibound}.  
\end{proof}

\begin{rmk}
For fixed $k>1$ we have $\alpha_{j,k}\sim {\rm Li}_{1-j}(e^{1-k})$ as $j\to\infty$.  (See the appendix.)
\end{rmk}

We will use the following estimate for the sum of reciprocals of squares of primes exceeding a given bound.

\begin{lem}\label{primesquaretail}
Uniformly for $x>1$ and $a \ge 2$, we have
\begin{displaymath}
 \sum_{p > x} \frac{1}{p^a} \ll \frac{1}{x^{a-1}\log x}.
\end{displaymath}
\end{lem}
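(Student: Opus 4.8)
The plan is to prove this by Abel summation against the prime counting function $\pi(t)$, together with the Chebyshev-type bound $\pi(t)\ll t/\log t$ (immediate from \eqref{pnt}, and noting $\pi(t)=0$ for $t<2$). Since the estimate must be uniform in $a\ge 2$, I would first reduce to the case $a=2$: for any prime $p>x>1$ and any $a\ge 2$ one has $p^{-(a-2)}\le x^{-(a-2)}$, so
\[
\sum_{p>x}\frac{1}{p^a}\ \le\ \frac{1}{x^{a-2}}\sum_{p>x}\frac{1}{p^2},
\]
and it suffices to prove $\sum_{p>x}p^{-2}\ll 1/(x\log x)$ uniformly for $x>1$.

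For this, I would integrate by parts: $\sum_{p>x}p^{-2}=-\pi(x)/x^2+2\int_x^\infty \pi(t)t^{-3}\,dt\le 2\int_{\max(x,2)}^\infty \pi(t)t^{-3}\,dt$, where the boundary contribution at infinity vanishes since $\pi(t)/t^2\ll 1/(t\log t)\to 0$, the nonpositive term $-\pi(x)/x^2$ is dropped, and the lower limit is raised to $2$ because $\pi$ vanishes below $2$. Inserting $\pi(t)\ll t/\log t$ and pulling the factor $1/\log t\le 1/\log\max(x,2)$ out of the integral bounds the right-hand side by
\[
\ll\ \frac{1}{\log\max(x,2)}\int_{\max(x,2)}^\infty\frac{dt}{t^2}\ =\ \frac{1}{\max(x,2)\log\max(x,2)}\ \le\ \frac{1}{x\log x},
\]
the last step using that $y\mapsto y\log y$ is increasing on $(1,\infty)$, so its value at $\max(x,2)$ is at least its value at $x$. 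Combined with the reduction above this yields $\sum_{p>x}p^{-a}\ll 1/(x^{a-1}\log x)$, uniformly for $x>1$ and $a\ge 2$.

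I do not anticipate a genuine obstacle: once the case $a=2$ is isolated, the estimate is a one-line partial summation. The only points requiring attention are the uniformity in $a$ (cleanly handled by the reduction, which avoids carrying any factor depending on $a$) and the behavior as $x\to 1^+$, where $1/\log x$ blows up — this is why the integral is cut at $\max(x,2)$ and why the monotonicity of $y\log y$ on $(1,\infty)$ is invoked at the end, rather than naively bounding $\int_x^\infty t^{-2}\,dt = 1/x$.
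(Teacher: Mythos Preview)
Your proof is correct and follows essentially the same route as the paper: reduce to the case $a=2$ (the paper says only that the extension ``follows immediately,'' and your inequality $p^{-(a-2)}\le x^{-(a-2)}$ is the obvious reduction), then apply partial summation with $\pi(t)\ll t/\log t$. Your extra care with $\max(x,2)$ and the monotonicity of $y\log y$ to cover $1<x\le 2$ is a detail the paper omits but does not alter the argument.
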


\begin{proof}
We first prove the case $a=2$, applying partial summation to write
\begin{displaymath}
\sum_{p>x}\frac{1}{p^2} = -\frac{\pi(x)}{x^2}+2\int_x^\infty \frac{\pi(t)}{t^3}dt
\end{displaymath}
and then applying the prime number theorem.  (In fact, Chebyshev's estimate $\pi(t)\ll t/\log t$ is sufficient.) The extension to larger $a$ follows immediately.
\end{proof}

Note that the implied constant in Lemma \ref{primesquaretail} can be taken as 1 by extending the argument for the case $a=2$ due to Nguyen and Pomerance \cite[Lemma 2.7]{nguyen2019reciprocal}.

We have the following additional formula for $\alpha_{j,k}$.

\begin{lem}\label{djk}
For each $k\ge 2$ and $j\ge 0$, we have
\begin{displaymath}
    \alpha_{j,k} = P(k)\log^j 2 + j d_{j,k},
\end{displaymath}
where 
\begin{displaymath}
d_{j,k} := \int_2^\infty \frac{\epsilon_k(t)\log^{j-1}t}{t}dt
\end{displaymath}
and where 
\begin{displaymath}
\epsilon_k(t) := \sum_{p>t}p^{-k}.
\end{displaymath}
\end{lem}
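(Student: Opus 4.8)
The plan is to start from the definition $\alpha_{j,k} = \sum_p (\log p)^j/p^k$ and apply partial summation (Abel summation) against the weight $1/p^k$, organized so that the ``counting function'' is the tail $\epsilon_k(t) = \sum_{p > t} p^{-k}$ rather than $\pi(t)$. Concretely, I would write $\alpha_{j,k} = \sum_p (\log p)^j p^{-k}$ and split off the first prime: the term $p = 2$ contributes $2^{-k}(\log 2)^j$, and more generally summing $(\log p)^j p^{-k}$ over all $p$ can be rewritten by noting $p^{-k} = \epsilon_k(p^-) - \epsilon_k(p)$, i.e. the jump of $-\epsilon_k$ at $p$. Thus $\sum_p (\log p)^j p^{-k} = -\int_{2^-}^\infty (\log t)^j \, d\epsilon_k(t)$, since $\epsilon_k$ is a right-continuous (or left-continuous, depending on convention) step function that decreases by exactly $p^{-k}$ at each prime $p\ge 2$ and is constant elsewhere, with $\epsilon_k(t)\to 0$ as $t\to\infty$.

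Next I would integrate by parts. With $u = (\log t)^j$ and $dv = -d\epsilon_k(t)$, we get
\begin{displaymath}
-\int_{2^-}^\infty (\log t)^j \, d\epsilon_k(t) = (\log 2)^j\,\epsilon_k(2^-) + \int_2^\infty \epsilon_k(t)\, d\big((\log t)^j\big) = (\log 2)^j\,\epsilon_k(2^-) + j\int_2^\infty \frac{\epsilon_k(t)(\log t)^{j-1}}{t}\,dt,
\end{displaymath}
using $d((\log t)^j) = j(\log t)^{j-1}\,dt/t$. The boundary term at infinity vanishes because $\epsilon_k(t)\ll t^{1-k}/\log t$ by Lemma \ref{primesquaretail}, so $(\log t)^j\epsilon_k(t)\to 0$ for $k\ge 2$. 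Finally, $\epsilon_k(2^-) = \sum_{p\ge 2} p^{-k} = P(k)$, since the tail sum ``starting just below $2$'' includes the prime $2$ itself. Recognizing the remaining integral as exactly $d_{j,k}$ gives $\alpha_{j,k} = P(k)(\log 2)^j + j\, d_{j,k}$, as claimed. The case $j = 0$ is immediate since then the integral term drops and $\alpha_{0,k} = P(k)$.

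The only genuinely delicate point is bookkeeping the endpoints of the partial summation correctly — in particular, making sure the prime $p=2$ is captured in the boundary term $\epsilon_k(2^-)=P(k)$ rather than being double-counted or dropped, and confirming the integral in $d_{j,k}$ genuinely runs from $2$ (not $2^-$), which it does because $\epsilon_k$ is constant on $[2, 3)$ so the lower endpoint contributes nothing extra to the continuous integral. The convergence of $d_{j,k}$ itself is harmless: Lemma \ref{primesquaretail} gives $\epsilon_k(t)(\log t)^{j-1}/t \ll (\log t)^{j-2} t^{-k}$, which is integrable on $[2,\infty)$ for every $k\ge 2$. One could alternatively avoid Stieltjes integrals entirely and do the computation at the level of finite sums $\sum_{p\le x}$, using $\alpha_{j,k} = \lim_{x\to\infty}\sum_{p\le x}(\log p)^j p^{-k}$ and Abel summation with $A(t) = \sum_{p\le t} p^{-k} = P(k) - \epsilon_k(t)$, then letting $x\to\infty$; this is perhaps the cleanest presentation and sidesteps any worry about one-sided limits.
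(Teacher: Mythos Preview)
Your proof is correct and follows essentially the same approach as the paper: partial summation against the weight $p^{-k}$, rewritten in terms of the tail $\epsilon_k(t)$. The paper carries this out via your suggested alternative at the end---Abel summation on $\sum_{p\le x}$ with $A(t)=s_k(t)=\sum_{p\le t}p^{-k}=P(k)-\epsilon_k(t)$, then letting $x\to\infty$---while your main write-up does the equivalent Stieltjes computation with $\epsilon_k$ directly; these differ only cosmetically.
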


\begin{proof}
By partial summation, we have
\begin{displaymath}
 \alpha_{j,k} = \lim_{x\to \infty}\sum_{p\le x}\frac{\log^j p}{p^k} = \lim_{x\to \infty}\left(s_k(x)\log^j x - j\int_2^x \frac{s_k(t)\log^{j-1}t}{t}~dt\right),
\end{displaymath}
where $s_k(t):=\sum_{p\le t}p^{-k}$.  We then use the identity $s_k(t)=P(k)-\epsilon_k(t)$ and take the limit.  Note that the resulting improper integral is convergent for all $k \ge 2$ because we have $\epsilon_k(t)\ll (t^{k-1}\log t)^{-1}$ for all $k \ge 2$ by Lemma \ref{primesquaretail}.
\end{proof}

\begin{rmk}
In Proposition \ref{alphajkestimate} and Lemma \ref{djk}, $j$ and $k$ may take any real values in the specified ranges upon replacing $(j-1)!$ with $\Gamma(j)$.  However, in our results we will only use integer values of $j$ and $k$.
\end{rmk}

We will also use the Weierstrass product formula for the gamma function $\Gamma(z)$.  (See for instance \cite[Theorem II.0.6]{TenenbaumBook}.)

\begin{lem}\label{weierstrass}
For all $z\in \mathbb{C}$, we have

\begin{equation}
 \frac{1}{\Gamma(z)} = ze^{\gamma z}\prod_{n\ge 1}\left(1+\frac{z}{n}\right)e^{-z/n}.
\end{equation}

\end{lem}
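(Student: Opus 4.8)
The plan is to derive the Weierstrass product from the Gauss--Euler limit representation of the gamma function. First I would establish that for every $z\in\mathbb{C}\setminus\{0,-1,-2,\dots\}$,
\[
\Gamma(z)=\lim_{n\to\infty}\frac{n!\,n^{z}}{z(z+1)\cdots(z+n)}.
\]
One standard route: for $\operatorname{Re}z>0$ start from $\Gamma(z)=\int_0^\infty t^{z-1}e^{-t}\,dt$, write $e^{-t}=\lim_n(1-t/n)^n$, justify passing to the limit under the integral sign by dominated convergence (using $0\le(1-t/n)^n\le e^{-t}$ on $[0,n]$), and evaluate $\int_0^n t^{z-1}(1-t/n)^n\,dt$ by $n$-fold integration by parts to obtain $n!\,n^{z}/[z(z+1)\cdots(z+n)]$; then extend to all non-integer $z$ via analytic continuation through the functional equation. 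Inverting and regrouping,
\[
\frac{1}{\Gamma(z)}=\lim_{n\to\infty}\frac{z(z+1)\cdots(z+n)}{n!\,n^{z}}
=\lim_{n\to\infty}z\,n^{-z}\prod_{k=1}^{n}\Bigl(1+\frac{z}{k}\Bigr).
\]

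Next I would introduce the exponential factors that make the product converge. Writing $n^{-z}=\exp(-z\log n)$ and using the harmonic-sum asymptotic $\sum_{k=1}^{n}1/k=\log n+\gamma+o(1)$, we get $\log n=\sum_{k=1}^{n}1/k-\gamma+o(1)$, hence
\[
n^{-z}=e^{\gamma z}\,e^{o(1)}\exp\Bigl(-z\sum_{k=1}^{n}\tfrac1k\Bigr)=e^{\gamma z}\,e^{o(1)}\prod_{k=1}^{n}e^{-z/k}.
\]
Substituting this into the previous display yields
\[
\frac{1}{\Gamma(z)}=z\,e^{\gamma z}\lim_{n\to\infty}\prod_{k=1}^{n}\Bigl(1+\frac{z}{k}\Bigr)e^{-z/k},
\]
provided this last limit exists.

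It therefore remains to verify convergence of $\prod_{k\ge1}(1+z/k)e^{-z/k}$, which is where the genuine (though routine) work lies. Taking the principal logarithm of the $k$-th factor for $k>2|z|$, one has $\log\bigl((1+z/k)e^{-z/k}\bigr)=\log(1+z/k)-z/k=-z^{2}/(2k^{2})+O(|z|^{3}/k^{3})$, so $\sum_k\bigl|\log(1+z/k)-z/k\bigr|$ converges, uniformly on compact subsets of $\mathbb{C}$. Hence the partial products converge locally uniformly to an entire function $G(z)$, so $z\,e^{\gamma z}G(z)$ is entire. Since it agrees with $1/\Gamma(z)$ for all $z\notin\{0,-1,-2,\dots\}$ by the computation above, and since $1/\Gamma$ extends to an entire function vanishing exactly at the non-positive integers --- which are precisely the points where the factor $z\prod_k(1+z/k)$ vanishes --- the identity theorem forces equality for all $z\in\mathbb{C}$.

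The main obstacle is not conceptual but the careful justification of the two limit interchanges: passing to the limit under the integral in the derivation of the Gauss formula, and the interchange implicit in combining $\lim_n n^{-z}$ with $\lim_n\prod_{k=1}^{n}(1+z/k)$. Both are handled by uniform estimates --- domination for the first, local uniform convergence of the logarithm series for the second. Alternatively, one can sidestep the integral entirely by \emph{taking} the Gauss limit formula as the definition of $\Gamma$, in which case only the product-convergence step is needed; this is essentially the route in \cite[Theorem II.0.6]{TenenbaumBook}.
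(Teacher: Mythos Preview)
The paper does not actually prove this lemma; it simply states the formula and refers the reader to \cite[Theorem II.0.6]{TenenbaumBook}. Your sketch is a correct outline of the standard derivation via the Gauss--Euler limit formula --- indeed the very route you attribute to Tenenbaum at the end --- so you have supplied more detail than the paper itself does, and nothing further is needed.
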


We remark that taking the logarithm of the Weierstrass product formula after applying the functional equation $\Gamma(z+1)=z\Gamma(z)$ gives the expansion

\begin{equation}\label{gammaexpansion}
 \Gamma(z+1) = \exp\left(-\gamma z + \sum_{j\ge 2}\frac{\zeta(j)}{j}(-z)^j\right),~~(|z|<1).
\end{equation}

See \cite[Lemma 5.1]{HME} for instance.  The following lemma gives derivatives of $1/\Gamma$ at the nonpositive integers.

\begin{lem}\label{gammaderivatives}
For all integers $M\ge 1$, we have
\begin{equation}
    \left(\frac{1}{\Gamma}\right)(1-M) = 0,
\end{equation}
\begin{equation} \label{eq17}
    \left(\frac{1}{\Gamma}\right)'(1-M) = (-1)^{M-1}(M-1)!,
\end{equation}
and 
\begin{equation}
    \left(\frac{1}{\Gamma}\right)''(1-M) = 2(-1)^M(M-1)!\left(\sum_{1\le j\le M-1}\frac{1}{j}-\gamma\right).
\end{equation}
\end{lem}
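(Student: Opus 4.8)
The plan is to move the problem to a neighborhood of the origin via the functional equation $\Gamma(z+1)=z\Gamma(z)$ and then apply the expansion \eqref{gammaexpansion}. First I would iterate the functional equation $M$ times to get $\Gamma(z+M)=\Gamma(z)\prod_{i=0}^{M-1}(z+i)$, so that
\begin{equation*}
\frac{1}{\Gamma(z)}=\frac{1}{\Gamma(z+M)}\,\prod_{i=0}^{M-1}(z+i).
\end{equation*}
Set $w:=z-(1-M)$; then $z+M=w+1$, and reindexing by $\ell=M-1-i$ gives $\prod_{i=0}^{M-1}(z+i)=\prod_{\ell=0}^{M-1}(w-\ell)=w\prod_{\ell=1}^{M-1}(w-\ell)$. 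Factoring the constants out of the last product via $(w-\ell)=-\ell(1-w/\ell)$, this equals $(-1)^{M-1}(M-1)!\,w\prod_{\ell=1}^{M-1}(1-w/\ell)$, whence
\begin{equation*}
\frac{1}{\Gamma(z)}=(-1)^{M-1}(M-1)!\;\frac{w}{\Gamma(w+1)}\prod_{\ell=1}^{M-1}\Bigl(1-\frac{w}{\ell}\Bigr),
\end{equation*}
with the empty product understood to be $1$ when $M=1$.

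Next I would expand each factor as a power series in $w$ about $w=0$, keeping terms through order $w^2$. One has $\prod_{\ell=1}^{M-1}(1-w/\ell)=1-H_{M-1}w+O(w^2)$ where $H_{M-1}=\sum_{1\le j\le M-1}1/j$, while \eqref{gammaexpansion} gives $1/\Gamma(w+1)=\exp(\gamma w+O(w^2))=1+\gamma w+O(w^2)$. Multiplying these out yields
\begin{equation*}
\frac{1}{\Gamma(z)}=(-1)^{M-1}(M-1)!\,\bigl(w+(\gamma-H_{M-1})\,w^2+O(w^3)\bigr).
\end{equation*}
Reading off the Taylor coefficients at $z=1-M$ (i.e.\ $w=0$) then finishes the proof: the constant term $0$ gives $(1/\Gamma)(1-M)=0$; the coefficient of $w$ gives $(1/\Gamma)'(1-M)=(-1)^{M-1}(M-1)!$; and twice the coefficient of $w^2$ gives $(1/\Gamma)''(1-M)=2(-1)^{M-1}(M-1)!(\gamma-H_{M-1})=2(-1)^M(M-1)!\bigl(\sum_{1\le j\le M-1}1/j-\gamma\bigr)$.

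There is no analytic difficulty here, since $1/\Gamma$ is entire and \eqref{gammaexpansion} converges for $|w|<1$; the only steps needing care are the sign bookkeeping in the reindexing of the product and retaining precisely the right $w$- and $w^2$-terms. (Alternatively, one could bypass the functional equation and read the same coefficients off the Laurent expansion of $\Gamma$ at its simple pole $z=1-M$, whose residue is $(-1)^{M-1}/(M-1)!$ and whose next coefficient involves $\psi(M)=H_{M-1}-\gamma$; but using \eqref{gammaexpansion} keeps everything at the origin and is cleaner.)
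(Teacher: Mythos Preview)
Your proof is correct and the bookkeeping is clean. However, it takes a different route from the paper's argument. The paper proves the second and third identities via the reflection formula $(1/\Gamma)(z)=\pi^{-1}\sin(\pi z)\,\Gamma(1-z)$: differentiating once or twice and evaluating at $z=1-M$ reduces the problem to computing $\Gamma'(M)$ (and, implicitly, $\Gamma''(M)$), which the paper then obtains by induction on $M$ from the functional equation $\Gamma(z+1)=z\Gamma(z)$ together with $\Gamma'(1)=-\gamma$ from \eqref{gammaexpansion}.

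Your approach instead iterates the functional equation $M$ times up front, shifting the whole computation to a neighborhood of $w=0$ and reading all three values off a single Taylor expansion; no induction or reflection formula is needed. This is more uniform and arguably tidier for the stated lemma. The paper's method, on the other hand, sets up a recursive machinery (differentiate the reflection formula, then the functional equation) that extends mechanically to higher derivatives $(1/\Gamma)^{(j)}(1-M)$, which the paper exploits immediately afterward to compute $(1/\Gamma)'''(1-M)$ and $\Gamma''(M)$.
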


\begin{proof}
The first assertion holds because $\Gamma$ has poles at the nonpositive integers.  For the second and third assertions we use the well-known reflection formula for the gamma function,
\begin{displaymath}
 \left(\frac{1}{\Gamma}\right)(z) = \frac{\sin\pi z}{\pi}\Gamma(1-z),~~(z\in \mathbb{C}\setminus \mathbb{N}).
\end{displaymath}
We differentiate the reflection formula using the product rule.  When $z\le 0$ is an integer, we have $\sin(\pi z) =0$, which simplifies the computations for the derivatives of $1/\Gamma$.  We have
 \begin{displaymath}
  \left(\frac{1}{\Gamma}\right)'(1-M) = (-1)^{M-1}\Gamma(M) = (-1)^{M-1}(M-1)!,~~(M\ge 1),
 \end{displaymath}
 which is the second assertion.  Finally, for the third assertion we have
 \begin{displaymath}
  \left(\frac{1}{\Gamma}\right)''(1-M) = 2(-1)^M\Gamma'(M),~~(M\ge 1).
 \end{displaymath}
 
 Thus it suffices to show that
 \begin{displaymath}
  \Gamma'(M) = (M-1)!\left(\sum_{1\le j\le M-1}\frac{1}{j}-\gamma\right),~~(M\ge 1),
 \end{displaymath}
 
 which we verify by induction on $M$.  For the base step we have $\Gamma'(1)=-\gamma$ by \eqref{gammaexpansion}.  For the induction step we differentiate the functional equation $\Gamma(z+1)=z\Gamma(z)$, obtaining
 
  \begin{displaymath}
  \begin{aligned}
   \Gamma'(M+1) = \Gamma(M)+M\Gamma'
  (M) &= (M-1)!+M(M-1)!\left(\sum_{1\le j\le M-1}\frac{1}{j}-\gamma\right)\\ 
  &= M!\left(\sum_{1\le j\le M}\frac{1}{j}-\gamma\right),~~(M\ge 1).
  \end{aligned}
 \end{displaymath}
 
 This completes the proof of Lemma \ref{gammaderivatives}.
\end{proof}

In principle, the method of the proof of Lemma \ref{gammaderivatives} can be extended recursively to compute $(1/\Gamma)^{(j)}(1-M)$ for any $j\ge 0$ and $M\ge 1$.  Specifically, we take the $j$th derivative of the reflection formula and then take the derivative of order $j-1$ of the functional equation to obtain a recursive formula for $\Gamma^{(j-1)}(M)$.  For example, letting $j=3$ we obtain

\begin{displaymath}
 \left(\frac{1}{\Gamma}\right)'''(1-M) = (-1)^{M-1}(3\Gamma''(M)-\pi^2\Gamma(M)),~~(M\ge 1)
\end{displaymath}
and
\begin{displaymath}
 \Gamma''(M+1) = 2\Gamma'(M)+M\Gamma''(M),~~(M\ge 1).
\end{displaymath}
We have $\Gamma''(1)=\gamma^2+\zeta(2)$ by \eqref{gammaexpansion}.  It follows by induction that
\begin{displaymath}
\begin{aligned}
 \Gamma''(M) &= (M-1)!\left(\left(\sum_{1\le j\le M-1}\frac{1}{j}\right)\left(\sum_{1\le j\le M-1}\frac{1}{j}-2\gamma\right)-\sum_{1\le j\le M-1}\frac{1}{j^2}+\gamma^2+\zeta(2)\right)\\
 & = (M-1)!\left(\left(\sum_{1\le j\le M-1}\frac{1}{j}-\gamma\right)^2+\zeta(2)-\sum_{1\le j\le M-1}\frac{1}{j^2}\right)
 \end{aligned}
\end{displaymath}

We conclude this section with several other formulas for the $\alpha_j$.  We have defined $\alpha_{j,k}$ as a convergent sum over primes, while $\alpha_j$ is defined as a limit involving a divergent sum over primes.  We have the following alternative formula for the $\alpha_j$ which involves a convergent sum over primes, and which generalizes the well-known formula

\begin{equation}\label{alpha1formula}
\alpha_1 = \gamma + \sum_p \frac{\log p}{p(p-1)}
\end{equation}

for the constant appearing in Lemma \ref{Mertens1}. 

\begin{prop}\label{alphajformulaprop}
For each $j\ge 1$, we have
\begin{equation}\label{alphajformula}
\alpha_j = \frac{1}{j}\left(\gamma\log^{j-1}2+\sum_p \frac{\log^j p}{p(p-1)} - \frac{j-1}{j}\log^j 2 + (j-1)\int_2^\infty \frac{\bar{E}(t)\log^{j-2}t}{t}dt\right),
\end{equation}
\end{prop}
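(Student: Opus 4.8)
The plan is to generalize the classical derivation of \eqref{alpha1formula} by carrying the extra weight $(\log p)^{j-1}$ through a partial summation. Set $B(t) := \sum_{p\le t}\frac{\log p}{p-1}$. Using the splitting $\frac{1}{p-1}=\frac1p+\frac1{p(p-1)}$, the prime number theorem in the strong form $\sum_{p\le t}\frac{\log p}{p}=\log t-\alpha_1+O_A((\log t)^{-A})$ (obtained as in the proof of Proposition \ref{alphajkestimate}), the identity $\sum_p\frac{\log p}{p(p-1)}=\alpha_1-\gamma$ from \eqref{alpha1formula}, and Lemma \ref{primesquaretail} to bound $\sum_{p>t}\frac{\log p}{p(p-1)}$, one obtains $B(t)=\log t-\gamma+\bar E(t)$ with $\bar E(t)\ll_A(\log t)^{-A}$ for every $A>0$. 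In particular $(\log x)^{j-1}\bar E(x)\to 0$ and $\int_2^\infty\bar E(t)(\log t)^{j-2}\frac{dt}{t}$ converges.

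First I would apply Riemann--Stieltjes integration by parts to $\sum_{p\le x}\frac{(\log p)^j}{p-1}=\int_{2^-}^x(\log t)^{j-1}\,dB(t)$, giving
\begin{displaymath}
\sum_{p\le x}\frac{(\log p)^j}{p-1}=(\log x)^{j-1}B(x)-(j-1)\int_2^x B(t)\,(\log t)^{j-2}\,\frac{dt}{t}.
\end{displaymath}
Substituting $B(t)=\log t-\gamma+\bar E(t)$ and evaluating the elementary integrals $\int_2^x(\log t)^{j-1}\frac{dt}{t}$ and $\int_2^x(\log t)^{j-2}\frac{dt}{t}$, the contributions of order $(\log x)^j$ combine to $\frac1j(\log x)^j$ and those of order $\gamma(\log x)^{j-1}$ cancel, so that letting $x\to\infty$ in the remaining (now convergent) integral yields
\begin{displaymath}
\sum_{p\le x}\frac{(\log p)^j}{p-1}=\frac{(\log x)^j}{j}+\frac{j-1}{j}(\log 2)^j-\gamma(\log 2)^{j-1}-(j-1)\int_2^\infty\bar E(t)(\log t)^{j-2}\,\frac{dt}{t}+o(1).
\end{displaymath}

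Next I would strip off the prime powers: by $\frac1{p-1}=\frac1p+\frac1{p(p-1)}$ we have $\sum_{p\le x}\frac{(\log p)^j}{p}=\sum_{p\le x}\frac{(\log p)^j}{p-1}-\sum_{p\le x}\frac{(\log p)^j}{p(p-1)}$, and the last sum converges absolutely to $\sum_p\frac{(\log p)^j}{p(p-1)}$ by Lemma \ref{primesquaretail}. Subtracting and rearranging the previous display gives, as $x\to\infty$,
\begin{displaymath}
\frac{(\log x)^j}{j}-\sum_{p\le x}\frac{(\log p)^j}{p}\ \longrightarrow\ \gamma(\log 2)^{j-1}+\sum_p\frac{(\log p)^j}{p(p-1)}-\frac{j-1}{j}(\log 2)^j+(j-1)\int_2^\infty\bar E(t)(\log t)^{j-2}\,\frac{dt}{t}.
\end{displaymath}
By \eqref{alphalimit} (equivalently, by the definition $\alpha_{j,1}=-j\alpha_j$) the left-hand side tends to $j\alpha_j$, so dividing by $j$ gives \eqref{alphajformula}. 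When $j=1$ the factor $j-1$ annihilates both the integral and the $(\log 2)^j$ term, recovering \eqref{alpha1formula}.

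The only real work is the bookkeeping: establishing the displayed form of $B(t)$ together with the faster-than-any-power decay of $\bar E$ — this genuinely needs the strong form of Mertens' first theorem rather than the $(\log t)^{-2}$ error of Lemma \ref{Mertens1}, since $\int_2^\infty\bar E(t)(\log t)^{j-2}\frac{dt}{t}$ must converge for all $j$; justifying the interchange of $\lim_{x\to\infty}$ with both the convergent tail integral and the convergent prime-power series; and verifying that the divergent-order terms $(\log x)^j$ and $\gamma(\log x)^{j-1}$ cancel exactly. I expect the exact cancellation of those divergent-order terms to be the most delicate point, with the degenerate case $j=1$ (where the integral term is vacuous) checked separately.
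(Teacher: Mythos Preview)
Your proof is correct and follows essentially the same route as the paper: both apply partial summation to $\sum_{p\le x}\frac{(\log p)^j}{p-1}$ against the weight function $A(t)=B(t)=\sum_{p\le t}\frac{\log p}{p-1}=\log t-\gamma+\bar E(t)$, then recover the sum over $1/p$ via $\frac1p=\frac1{p-1}-\frac1{p(p-1)}$ and pass to the limit. The only difference is that the paper cites \cite[p.~25]{TenenbaumBook} for the form of $A(t)$ and leaves the explicit cancellation of the $(\log x)^j$ and $\gamma(\log x)^{j-1}$ terms to the reader, whereas you derive the former from \eqref{alpha1formula} plus the strong Mertens first theorem and spell out the latter.
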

where
\begin{displaymath}
\bar{E}(t) := \sum_{p\le t}\frac{\log p}{p-1} - (\log t - \gamma).
\end{displaymath}

\begin{proof}
We apply partial summation with the formula
\begin{equation}\label{logpoverp-1}
    A(t):=\sum_{p\le t}\frac{\log p}{p-1} = \log t - \gamma + \bar{E}(t),
\end{equation} 
where $\bar{E}(t)\ll_A 1/\log^A t$ for all $A>0$ by a strong form of the prime number theorem.   (See for instance \cite[p.\ 25]{TenenbaumBook}.) We thus have
\begin{displaymath}
\sum_{p\le x}\frac{\log^j p}{p} = \sum_{p\le x}\frac{\log^j p}{p-1} - \sum_{p\le x}\frac{\log^j p}{p(p-1)}
\end{displaymath}
and
\begin{displaymath}
\sum_{p\le x}\frac{\log^j p}{p-1} = A(x)\log^{j-1}x - (j-1) \int_2^x \frac{A(t)\log^{j-2}t}{t}dt.
\end{displaymath}
We apply formula \eqref{logpoverp-1} and then take the limit as $x\to \infty$.
\end{proof}

\begin{rmk}
Note that when $j=1$, Proposition \ref{alphajformulaprop} reduces to \eqref{alpha1formula}.  Note also that 
\begin{displaymath}
\sum_p \frac{\log^j p}{p(p-1)} = \sum_{k\ge 2}\alpha_{j,k}
\end{displaymath}
and in particular
\begin{displaymath}
\alpha_1 = \gamma + \sum_{k\ge 2}\alpha_{1,k}.
\end{displaymath}
\end{rmk}

We will show in \eqref{etaalpha} in the following section that

\begin{displaymath}
\alpha_j = \frac{1}{j}\left(\sum_{k\ge 2}k^{j-1}\alpha_{j,k}+(-1)^j (j-1)!\eta_{j-1}\right) \sim \frac{2^{j-1}\alpha_{j,2}}{j},~~~(j\ge 1).
\end{displaymath}
Solving for $\eta_j$, we have
\begin{displaymath}
\eta_j = \frac{(-1)^j}{j!}\sum_{k\ge 1} k^j\alpha_{j+1,k},~~~~(j\ge 0).
\end{displaymath}

We note that the $\eta_j$ have a significant role in other results.  In particular, Murty and Pathak \cite[p.\ 5-6]{murtyrelations} proved that the Riemann hypothesis can be deduced from properties of the $\eta_j$.

\section{Proof of Theorem \ref{mainthm}}\label{mainthmsec}

In this section we establish Theorem \ref{mainthm}. The proof will proceed by passing from sums over primes $p$ to sums over all prime powers $p^k$, to which Lemma \ref{etalemma} applies. The main term in the resulting estimates for $\alpha_j$ arises from the prime squares $p^2$.

Now recall that by \cite[Lemma 8.3]{HME}, we have
\begin{align}
\alpha_j & = \lim_{x\to\infty}\frac{1}{j}\left(\frac{(\log x)^j}{j} - \sum_{p\le x}\frac{(\log p)^j}{p}\right).
\end{align}
We note
\begin{align*}
\sum_{n\le x}\frac{\Lambda(n)}{n}(\log n)^{j-1} \ = \ \sum_{p\le x}\frac{(\log p)^j}{p} + \sum_{k\ge2}\sum_{p^k\le x}\frac{(\log p)^j}{p^k}k^{j-1}
\end{align*}
so that
\begin{displaymath}
\alpha_j = \frac{1}{j}\cdot \lim_{x \to \infty} \left(\frac{(\log x)^j}{j} - \sum_{n \le x} \Lambda(n) \frac{(\log n)^{j-1}}{n} + \sum_{k \ge 2} k^{j-1} \sum_{p^k \le x} \frac{(\log p)^j}{p^k}\right),
\end{displaymath}
which by Lemma \ref{etalemma} gives
\begin{align}\label{etaalpha}
\alpha_j = \frac{1}{j} \left( \sum_{k \ge 2} k^{j-1} \sum_{p}\frac{(\log p)^j}{p^k} \ + \ (-1)^j(j-1)! \eta_{j -1}\right).
\end{align}

Note the coefficients $\eta_j$ are are bounded as $\eta_j \ll_{\epsilon} (3 - \epsilon)^ {-j}$ for any $0 <\epsilon<3$, since by Lemma \ref{etalemma}, $g(s)$ may be analytically continued on the open disk of radius 3 centered at $s=1$.  Therefore $(-1)^j(j-1)! \eta_{j -1}\ll_{\epsilon} (j-1)!(3 - \epsilon)^ {-j}$ so the contribution from this term is negligible.

In summary, $\alpha_j = (S+(-1)^j(j-1)!\eta_{j -1})/j$ for the series $S$,
\begin{align*}
S:= \sum_{k \ge 2} k^{j-1} \sum_{p } \frac{(\log p)^j}{p^k} = \sum_{k \ge 2} k^{j-1} \alpha_{j,k} =  S_1 + S_2 + S_3 + S_4,
\end{align*}
decomposing $S$ as
\begin{align*}
S_1 & = \sum_{k \ge 4} k^{j-1} \sum_{p\le e^{16}} \frac{(\log p)^j}{p^k}, &
S_2 & = \ 2^{j-1} \sum_{p}\frac{(\log p)^j}{p^2},\\
S_4 & =\sum_{k \ge 4} k^{j-1} \sum_{p> e^{16}} \frac{(\log p)^j}{p^k}, &
S_3 & =\ 3^{j-1} \sum_{p}\frac{(\log p)^j}{p^3}.
\end{align*}

First we observe that $S_1 \ll (j-1)!$, since for each prime $p \le e^{16}$, Lemma \ref{Polylog} gives
$$\sum_{k\geq 4} k^{j-1} (\log p)^j/p^k<(\log p)^j\text{Li}_{1-j}(1/p) \ll (j-1)!.$$

For $S_3$, by Proposition \ref{alphajkestimate} we have
\begin{align}
S_3=3^{j-1}\alpha_{j,3} = 3^{j-1}\frac{(j-1)!}{2^j}(1+o(1)) = 2^{j-1}(j-1)!(1+o(1))\frac{2}{3}\left(\frac{3}{4}\right)^j.
\end{align}

For $S_4$, by dyadic decomposition and Lemma \ref{Mertens1} we have
\begin{align}\label{k>3sum}
S_4 & = \sum_{k\geq 4} \sum_{p>e^{16}}\frac{(k\log p)^j}{kp^k} \ \le \ \sum_{k\geq 4}\sum_{n \ge 17}\frac{(kn)^{j-1}}{e^{(n-1)(k-1)}}\sum_{e^{n-1} < p \le e^n}\frac{\log p}{p} \nonumber\\
& \ \ll \sum_{k\geq 4}\sum_{n \ge 17}\frac{(kn)^{j-1}}{e^{(n-1)(k-1)}} = \sum_{\substack{k\geq 4,n \ge 17\\ d:=(n-1)(k-1)}} \left(k+d+\frac{d}{k-1}\right)^{j-1}e^{-d}.
\end{align}
Now $4\le k\le d/12$ by Lemma \ref{dklemma},  and so 
\begin{displaymath}
S_4\ll \sum_{d \ge 48} \tau(d)\left(\frac{17d}{12}\right)^{j-1}e^{-d} \ll_{\epsilon} \left( \frac{17}{12} \right)^{j-1} \sum_{d \ge 48} \frac{d^{j-1}}{(e-\epsilon)^d}
\end{displaymath}
for any $0<\epsilon<e$. Here $\tau$ denotes the divisor function, which satisfies $\tau(d)\le 2\sqrt{d}$. Thus by Lemma \ref{Polylog} with $x=e-\epsilon$, for $\epsilon>0$ sufficiently small,
\begin{align}\label{eq:S4}
S_4 = o\left( (3/2)^{j-1} (j-1)!\right).
\end{align}

It remains to address $S_2$.  By Proposition \ref{alphajkestimate}, we have
\begin{displaymath}
 S_2 = 2^{j-1}(j-1)!(1+O_m(j^{-m}))
\end{displaymath}
for any $m\ge 0$. Hence collecting the above estimates, we conclude
\begin{align}
\alpha_j = \frac{1}{j}\big(S_1+S_2+S_3+S_4+(-1)^j (j-1)!\eta_{j -1}\big) = \frac{2^{j-1}(j-1)!}{j} \left( 1 + O_m\left( j^{-m}\right) \right). 
\end{align}
This gives the unconditional part of Theorem \ref{mainthm}.

To complete the proof, we now assume the Riemann hypothesis.
It remains to prove the conditional part of Theorem \ref{mainthm}.  Let $b(x):=2^{j-1}(j-1)!\frac{2}{3}\left(\frac{3}{4}\right)^j$.  By the work above, we have
\begin{displaymath}
\begin{aligned}
    &S_1 = o(b(x)),&\qquad
    &S_3 = (1+o(1))b(x),\\
    &S_4 = o(b(x)),&\qquad
    &\eta_{j-1}(j-1)!=o(b(x)).
\end{aligned}
\end{displaymath}

Also, by equation \eqref{errorbound}, we have
\begin{align}
 S_2 = 2^{j-1}((j-1)!+O(1) - J),
\end{align}
where
\begin{displaymath}
 J:= \int_2^\infty \mathcal{E}(t) \cdot t^{-3} (\log t)^{j-1} (j - 2\log t) dt.
\end{displaymath}
Assuming the Riemann hypothesis we have $\mathcal{E}(t)\ll \sqrt{t}\log t$, in which case
\begin{displaymath}
J \ll j \int_1^{\infty} t^{-5/2} (\log t)^{j} dt + \int_1^{\infty} t^{-5/2} (\log t)^{j+1} dt.
\end{displaymath}
Substituting with $s=3\log t/2$, we therefore have
\begin{displaymath}
\begin{aligned}
    J & \ll j\int_0^{\infty} e^{-s} \left(\frac{2s}{3}\right)^{j} ds + \int_0^{\infty} e^{-s} \left(\frac{2s}{3}\right)^{j+1} ds\\
    &= \left(\frac{2}{3}\right)^{j} \left( j \Gamma(j+1) + \frac{2}{3} \Gamma(j+2)\right) 
    \ll \left(\frac{2}{3}\right)^{j}(j+1)!
    \ll j^2\left(\frac{2}{3}\right)^{j}(j-1)!.
\end{aligned}
\end{displaymath}
Thus $S_2 =2^{j-1}(j-1)![1+O(j^2\,(2/3)^{j})]$. Hence assuming the Riemann hypothesis we conclude
\begin{align}\label{eq:alphajRH}
\alpha_j = \frac{2^{j-1}(j-1)!}{j}\left(1+\left(3/4\right)^j(2/3+o(1))\right).
\end{align}
This completes the proof of Theorem \ref{mainthm}.

\section{Fine-scale asymptotics for $\mathcal{S}_k(x)$}\label{notes}

Following the method in Tenenbaum \cite{tenenbaum2016generalized}, we show that the constants $\alpha_j$ arise in the fine-scale asymptotic of $\mathcal{S}_k(x)$ for each $k\ge 2$.  The following proposition addresses the cases $k=2,3$. In principle, with great patience, the proof method can be used to address $\mathcal{S}_k(x)$ for arbitrary $k$.

\begin{prop}\label{s2s3}
For any integer $N>0$, we have
\begin{equation}\label{s2}
\mathcal{S}_2(x) = (\log_2 x+\beta)^2-\zeta(2)+\sum_{1\le j<N}\frac{2\alpha_j}{\log^j x}+O_N\left(\frac{\log_2 x}{\log^{N}x}\right).
\end{equation}

Also, for any integer $N>0$, we have
\begin{equation}\label{s3}
\begin{aligned}
\mathcal{S}_3(x) &= (\log_2 x+\beta)^3-3\zeta(2)(\log_2 x+\beta)+2\zeta(3)\\
&\quad +\sum_{1\le k<N}\frac{6\alpha_k(\log_2 x+\beta)-v_k}{\log^k x} 
\ + \ O_N\left(\frac{\log_2 x}{\log^{N}x}\right),
\end{aligned}
\end{equation}
for constants $v_k$ given by
\begin{equation}\label{vn}
v_k = 3\sum_{1\le j\le k-1}\left(\frac{2\alpha_k}{j}+j\binom{k-1}{j}\alpha_j\alpha_{k-j}\right).
\end{equation}
In particular $v_1=0$, $v_2=6\alpha_2+3\alpha_1^2$, and $v_3=9\alpha_3+12\alpha_1\alpha_2$. Moreover, $v_k$ satisfies
\begin{equation}\label{vnestimate}
    v_k = 9\alpha_k(\log k + O(1)).
\end{equation}
\end{prop}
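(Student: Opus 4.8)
\noindent\emph{Proof proposal.} For \eqref{s2} the plan is to start from the (exact) hyperbola identity
\[
\mathcal{S}_2(x) \ = \ 2\sum_{p\le\sqrt x}\frac1p\,\mathcal{R}_1(x/p) \ - \ \mathcal{R}_1(\sqrt x)^2 .
\]
On the range $p\le\sqrt x$ we have $x/p\ge\sqrt x$, so the strong form of Mertens' theorem (the case $k=1$ noted after \eqref{eq:mertens}) gives $\mathcal{R}_1(x/p)=\log_2(x/p)+\beta+O_N((\log x)^{-N})$ and $\mathcal{R}_1(\sqrt x)=\log_2 x-\log 2+\beta+O_N((\log x)^{-N})$, and summing these errors against $1/p$ costs only $O_N(\log_2 x/(\log x)^N)$. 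Writing $L=\log x$, the problem reduces to evaluating $2\sum_{p\le\sqrt x}p^{-1}\log_2(x/p)$: after inserting the two Mertens estimates, the elementary part simplifies to $(\log_2 x+\beta)^2$ modulo the integral treated next.

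The technical heart is a partial summation against $\mathcal{R}_1$. With $u=\log t$ one gets
\[
\sum_{p\le\sqrt x}\frac{\log_2(x/p)}{p} \ = \ (\log_2 x-\log 2)\,\mathcal{R}_1(\sqrt x) \ + \ \int_{\log 2}^{L/2}\frac{\mathcal{R}_1(e^u)}{L-u}\,du,
\]
and the integral splits via $\mathcal{R}_1(e^u)=\log u+\beta+E(u)$, where $E(u)\ll_A u^{-A}$ for every $A$ by \eqref{pnt}. In $\int(\log u+\beta)/(L-u)\,du$, the substitution $v=u/L$ and the identity $\int \log v/(1-v)\,dv=\text{Li}_2(1-v)$ bring in the dilogarithm; evaluating at $v=1/2$ and $v=1-(\log 2)/L$ through $\text{Li}_2(1/2)=\tfrac12\zeta(2)-\tfrac12\log^2 2$ and the reflection formula $\text{Li}_2(w)+\text{Li}_2(1-w)=\zeta(2)-\log w\log(1-w)$ produces the constant $-\zeta(2)$ of \eqref{s2} plus an explicit series $\sum_{1\le j<N}c_j/L^j+O_N(L^{-N})$. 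In $\int E(u)/(L-u)\,du$ one expands $1/(L-u)=\sum_{m\ge0}u^m/L^{m+1}$ and uses the decay of $E$ to obtain $\sum_{1\le j<N}L^{-j}\int_{\log 2}^\infty u^{j-1}E(u)\,du+O_N(L^{-N})$. A second partial summation, evaluating $\sum_{p\le y}(\log p)^j/p$ both directly (via \eqref{alphalimit}) and through $\int\mathcal{R}_1$, shows $\int_{\log 2}^\infty u^{j-1}E(u)\,du=\alpha_j-c_j$, so the two series combine to give precisely the coefficient $2\alpha_j$; this proves \eqref{s2}.

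For \eqref{s3} the plan is to iterate the two–variable hyperbola, writing
\[
\mathcal{S}_3(x)\ =\ \sum_{p\le\sqrt x}\frac1p\,\mathcal{S}_2(x/p)\ +\ \sum_{qr\le\sqrt x}\frac1{qr}\,\mathcal{R}_1(x/(qr))\ -\ \mathcal{R}_1(\sqrt x)\,\mathcal{S}_2(\sqrt x),
\]
so that every argument is $\ge\sqrt x$; one then substitutes \eqref{s2}, the strong Mertens estimate, and the expansions \eqref{logexpand}, \eqref{logexpand2} (with $a=1$ and $t=p$ or $qr$), expands powers by the multinomial theorem, and partial summates against $\mathcal{R}_1$ and against $\mathcal{S}_2$. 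The resulting integrals $\int(\log u)^a(L-u)^{-1}\,du$ now also involve the trilogarithm: $\text{Li}_3(1/2)=\tfrac78\zeta(3)-\tfrac12\zeta(2)\log 2+\tfrac16\log^3 2$, together with $\text{Li}_2(1/2)$, supplies the constants $2\zeta(3)$ and $-3\zeta(2)(\log_2 x+\beta)$. The diagonal terms $2\alpha_k/j$ of $v_k$ come from re-expanding $\mathcal{S}_2$'s own $2\alpha_l/(\log(x/p))^l$ in powers of $1/L$, while the bilinear terms $j\binom{k-1}{j}\alpha_j\alpha_{k-j}$ arise from the product of two $\log_2(x/\cdot)$–expansions (one from the argument of $\mathcal{S}_2$, one from the convolution with $\mathcal{R}_1$). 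I expect the main obstacle to be exactly this bookkeeping — matching each boundary term with the piece of each integral it cancels, and recognizing the surviving constants as $\zeta$–values and the $\alpha_j$; the estimates themselves are routine, but the identification again rests on the two–sided evaluation of $\sum_{p\le y}(\log p)^m/p$.

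Finally, for \eqref{vnestimate}: from \eqref{vn}, $v_k=6\alpha_k H_{k-1}+3W_k$ with $W_k:=\sum_{j=1}^{k-1}j\binom{k-1}{j}\alpha_j\alpha_{k-j}$ and $H_{k-1}=\sum_{j\le k-1}1/j=\log k+O(1)$. By Theorem \ref{mainthm} (case $m=1$) there is an absolute $C$ with $\alpha_j=\frac{2^{j-1}(j-1)!}{j}(1+\theta_j)$ and $|\theta_j|\le C/j$ for all $j\ge1$. Substituting and simplifying the factorials gives
\[
j\binom{k-1}{j}\alpha_j\alpha_{k-j}\ =\ \frac{2^{k-2}(k-1)!}{j(k-j)}\,(1+\theta_j)(1+\theta_{k-j}).
\]
Since $\sum_{j=1}^{k-1}\frac1{j(k-j)}=\frac2k H_{k-1}$ and $\sum_{j=1}^{k-1}\frac1{j^2(k-j)}=O(1/k)$, the contribution of the $\theta$'s is $O(1/k)$ times $2^{k-2}(k-1)!$, so $W_k=\frac{2^{k-1}(k-1)!}{k}\big(H_{k-1}+O(1)\big)=\alpha_k(\log k+O(1))$, using $\frac{2^{k-1}(k-1)!}{k}=\alpha_k(1+O(1/k))$. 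Hence $v_k=6\alpha_k(\log k+O(1))+3\alpha_k(\log k+O(1))=9\alpha_k(\log k+O(1))$, which is \eqref{vnestimate}. (The same computation, with $k$ replaced by $j$ and a factor $\tfrac12$ on the bilinear part, also yields the estimate \eqref{rnestimate}.)
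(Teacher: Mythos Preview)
Your argument for \eqref{vnestimate} is essentially the paper's (they invoke Theorem~\ref{mainthm} with $m=2$ rather than $m=1$, but either works, and the partial-fraction computation of $\sum 1/(j(k-j))$ is the same). For \eqref{s2} and \eqref{s3}, however, your route is genuinely different. The paper does not use the hyperbola method at all: it follows Tenenbaum and starts from the Hankel contour representation
\[
\mathcal{S}_k(x)=\frac{1}{2\pi i}\int_{\mathcal{H}}\Big(\log\tfrac{1}{s}+h(s)\Big)^k x^s\,\frac{ds}{s}+O_N\!\left(\frac{(\log_2 x)^{k-1}}{(\log x)^{N+1}}\right),
\]
where $h(s)=P(s+1)-\log(1/s)$ continues analytically through $s=0$. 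After a multinomial expansion the resulting integrals $I(m,M,x)$ are computed by differentiating Hankel's formula $\frac{1}{2\pi i}\int_{\mathcal{H}}x^s s^{-1-z}\,ds=(\log x)^z/\Gamma(z+1)$ in $z$ and evaluating at $z=-M$; the identities $h^{(n)}(0)=(-1)^{n-1}n\alpha_n$ (Lemma~\ref{hprop}) and closed forms for $(1/\Gamma)^{(m)}(1-M)$ (Lemma~\ref{gammaderivatives}) then deliver \eqref{s2}, \eqref{s3}, and \eqref{vn} by pure algebra. In particular, the harmonic piece $\sum_{j\le k-1}2\alpha_k/j$ in $v_k$ comes from $(1/\Gamma)''(1-M)$, and the bilinear piece from products $h^{(i)}(0)h^{(M-i)}(0)$, with no polylogarithm identities needed.

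Your elementary approach should work, and your identification $\int_{\log 2}^\infty u^{j-1}E(u)\,du=\alpha_j+(\text{explicit})$ is correct (it falls out of one partial summation on $\sum_{p\le y}(\log p)^j/p$). What you gain is avoiding complex analysis; what you pay is exactly the bookkeeping you flag --- each boundary term and each $\mathrm{Li}_2,\mathrm{Li}_3$ evaluation has to be matched by hand, and a new $k$ requires a fresh hyperbola layer and a new polylog reflection formula. The contour method, by contrast, is uniform in $k$: the same expansion \eqref{explicitsk} yields the $(\log_2 x)^{k-2}$ and $(\log_2 x)^{k-3}$ coefficients of $\mathcal{S}_k$ for arbitrary $k$ (Propositions following the proof), which would be considerably harder to extract from iterated hyperbolas.
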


Note that the estimate \eqref{s2} for $\mathcal{S}_2(x)$ is equivalent to the fine-scale estimate for $\mathcal{R}_2(x)$ in \cite[Theorem 1.3]{HME}.  (The error term above can be reduced to $O_N(1/\log^{N}x)$ by adding an extra term to the sum corresponding to $j=N$.)

Before proving Proposition \ref{s2s3}, we will state and prove a preliminary result.  Denote
\begin{displaymath}
 h(s):= P(s+1)-\log(1/s),\quad (\text{Re}(s)>0).
\end{displaymath}
As observed in \cite[p.\ 2]{tenenbaum2016generalized}, $h$ has an analytic continuation to a neighborhood of $s=0$.  Recalling the notation $B_n=-n\alpha_n$, we have the following lemma.

\begin{lem}\label{hprop}
We have $h(0)=\beta-\gamma$, and for each $n\ge 1$ we have
\begin{displaymath}
 h^{(n)}(0) = (-1)^n B_n = (-1)^{n-1}n\alpha_n.
\end{displaymath}

\end{lem}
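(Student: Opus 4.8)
The plan is to represent $P(1+s)$ and its $s$-derivatives, for real $s>0$, as Mellin integrals of partial sums over primes via Abel summation, and then to read off the behaviour as $s\to0^{+}$ from the known asymptotics of those partial sums. Because $h$ is analytic in a neighbourhood of $s=0$ — as recalled just after the definition of $h$ — we have $h^{(n)}(0)=\lim_{s\to0^{+}}h^{(n)}(s)$, so it is enough to evaluate these one-sided limits.

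For $h(0)$ I would set $A(t)=\sum_{p\le t}1/p$, so that Abel summation gives $P(1+s)=s\int_{2}^{\infty}A(t)\,t^{-1-s}\,dt$ for $\mathrm{Re}(s)>0$; Mertens' second theorem \eqref{eq:mertens} then writes $A(t)=\log_{2}t+\beta+r_{0}(t)$ with $r_{0}(t)\ll1/\log t$. The substitution $u=s\log t$ turns $s\int_{2}^{\infty}(\log_{2}t)\,t^{-1-s}\,dt$ into $\int_{s\log2}^{\infty}(\log u-\log s)e^{-u}\,du=-\log s-\gamma+o(1)$ as $s\to0^{+}$, using $\int_{0}^{\infty}(\log u)e^{-u}\,du=\Gamma'(1)=-\gamma$; moreover $s\int_{2}^{\infty}\beta\,t^{-1-s}\,dt=\beta\,2^{-s}\to\beta$, while the remainder $s\int_{2}^{\infty}r_{0}(t)\,t^{-1-s}\,dt\to0$ (see the last paragraph). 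Hence $P(1+s)=-\log s+\beta-\gamma+o(1)$, so $h(s)=P(1+s)+\log s\to\beta-\gamma$.

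For $n\ge1$ I would differentiate $h(s)=P(s+1)+\log s$ termwise on $s>0$: from $P^{(n)}(1+s)=(-1)^{n}\sum_{p}(\log p)^{n}p^{-1-s}$ and $(\log s)^{(n)}=(-1)^{n-1}(n-1)!\,s^{-n}$ we get $h^{(n)}(s)=(-1)^{n}\sum_{p}(\log p)^{n}p^{-1-s}+(-1)^{n-1}(n-1)!\,s^{-n}$. Writing $T_{n}(t)=\sum_{p\le t}(\log p)^{n}/p$, Abel summation gives $\sum_{p}(\log p)^{n}p^{-1-s}=s\int_{2}^{\infty}T_{n}(t)\,t^{-1-s}\,dt$, and the defining relation for $\alpha_{n}$ recalled in Section~\ref{mainthmsec} (from \cite[Lemma~8.3]{HME}) gives $T_{n}(t)=\tfrac{(\log t)^{n}}{n}-n\alpha_{n}+r_{n}(t)$ with $r_{n}(t)\to0$. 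With $u=s\log t$, the main piece $\tfrac{s}{n}\int_{2}^{\infty}(\log t)^{n}t^{-1-s}\,dt$ becomes $\tfrac{1}{ns^{n}}\int_{s\log2}^{\infty}u^{n}e^{-u}\,du=\tfrac{(n-1)!}{s^{n}}+O(s)$ by an incomplete-gamma estimate, the constant term contributes $-n\alpha_{n}2^{-s}\to-n\alpha_{n}$, and the remainder contributes $o(1)$. Thus $\sum_{p}(\log p)^{n}p^{-1-s}=(n-1)!\,s^{-n}-n\alpha_{n}+o(1)$; the two copies of $(n-1)!\,s^{-n}$ in $h^{(n)}(s)$ cancel, and letting $s\to0^{+}$ yields $h^{(n)}(0)=(-1)^{n}(-n\alpha_{n})=(-1)^{n-1}n\alpha_{n}=(-1)^{n}B_{n}$, recalling $B_{n}=-n\alpha_{n}$.

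The hard part — really the only part that is not bookkeeping — will be justifying that the error integrals vanish, i.e.\ that one may interchange the limit with the Mellin integral. Given $\varepsilon>0$ I would pick $X$ with $|r_{n}(t)|<\varepsilon$ for $t>X$; the tail is then at most $\varepsilon\,s\int_{X}^{\infty}t^{-1-s}\,dt=\varepsilon X^{-s}\le\varepsilon$, while the contribution of $[2,X]$ is $O_{X}\!\big(s\int_{2}^{X}t^{-1-s}\,dt\big)=O_{X}(2^{-s}-X^{-s})\to0$ (here $r_{n}$ is bounded on the compact $[2,X]$ since $T_{n}$ is a step function), whence $s\int_{2}^{\infty}r_{n}(t)\,t^{-1-s}\,dt\to0$. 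Everything else is routine manipulation with Abel summation and the substitution $u=s\log t$.
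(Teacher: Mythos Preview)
Your proof is correct, but it takes a genuinely different route from the paper's. The paper proceeds via the M\"obius-inversion identity $P(s)=\log\zeta(s)+\sum_{j\ge2}\tfrac{\mu(j)}{j}\log\zeta(js)$, differentiates $n$ times, and then identifies the resulting expression (involving $(\zeta'/\zeta)^{(n-1)}$ at $s=1$ and at the integers $j\ge2$) with $(-1)^{n}B_{n}$ by citing an explicit formula of Cri\c{s}an--Erban. In other words, it factors through the analytic structure of $\zeta'/\zeta$ near $s=1$ and relies on an external reference for the final identification.

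Your argument is more elementary and entirely self-contained: you never touch $\zeta$. You write $P(1+s)$ and its derivatives as Mellin integrals of the partial sums $T_{n}(t)=\sum_{p\le t}(\log p)^{n}/p$ via Abel summation, plug in the defining asymptotic $T_{n}(t)=(\log t)^{n}/n-n\alpha_{n}+o(1)$, evaluate the leading piece via the substitution $u=s\log t$ and the gamma integral, and kill the remainder with a clean $\varepsilon$--$X$ splitting. This buys you a direct link from the \emph{definition} of $\alpha_{n}$ to $h^{(n)}(0)$, whereas the paper's route buys a link from $h^{(n)}(0)$ to the formula (33) of \cite{cricsan2021counting} involving $\zeta'/\zeta$ (which is what one actually uses for numerics in Section~\ref{compute}). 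Both are short; yours is arguably the more natural proof of the lemma as stated.
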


\begin{proof}
The first assertion is due to Tenenbaum \cite[p.\ 2]{tenenbaum2016generalized}. For the second assertion, we follow Cri{\c{s}}an--Erban \cite[p.\ 10-11]{cricsan2021counting} in the computation of the constants $B_n$.  Specifically, note by M\"obius inversion,
\begin{displaymath}
P(s) = \log \zeta(s) + \sum_{j\ge 2}\frac{\mu(j)}{j}\log \zeta(js),\quad (\text{Re}(s)>1).
\end{displaymath}
Differentiating $n$ times, we have
\begin{align}
P^{(n)}(s)=\left(\frac{\zeta'}{\zeta}\right)^{(n-1)}(s)+ \sum_{j\ge 2}\mu(j)j^{n-1}\left(\frac{\zeta'}{\zeta}\right)^{(n-1)}(js).
\end{align}
Now for $\text{Re}(s)>1$, we let $H(s):= h(s-1) = P(s) - \log(1/(s-1))$.  We thus have
\begin{align}
H^{(n)}(s) = P^{(n)}(s) + \left(\frac{1}{s-1}\right)^{(n-1)} = \ \sum_{j\ge 2}\mu(j)j^{n-1}\left(\frac{\zeta'}{\zeta}\right)^{(n-1)}(js) + \left(\frac{\zeta'(s)}{\zeta(s)}+\frac{1}{s-1}\right)^{(n-1)}.
\end{align}
Now $H$ has an analytic continuation to a neighborhood of $1$, and letting $s\to 1$, we obtain
\begin{displaymath}
h^{(n)}(0) = H^{(n)}(1) = \sum_{j\ge 2}\mu(j)j^{n-1}\left(\frac{\zeta'}{\zeta}\right)^{(n-1)}(j) + \lim_{s\to 1}\left(\frac{\zeta'(s)}{\zeta(s)}+\frac{1}{s-1}\right)^{(n-1)}.
\end{displaymath}
By equation (33) of \cite{cricsan2021counting}, the right expression above is equal to $(-1)^n B_n$.  Recalling $B_n = -n\alpha_n$ completes the proof.
\end{proof}

We are now in a position to prove Proposition \ref{s2s3}.

\begin{proof}[Proof of Proposition \ref{s2s3}]

As in Tenenbaum \cite[p.\ 2]{tenenbaum2016generalized}, for each integer $N>0$ we have
\begin{align}
\mathcal{S}_k(x) = \frac{1}{2\pi i}\int_{\mathcal{H}}\left(\log\frac{1}{s}+\sum_{0\le j\le N}\frac{h^{j}(0)}{j!}s^{j}\right)^k x^s\frac{ds}{s} + O_N\left(\frac{(\log_2 x)^{k-1}}{(\log x)^{N+1}}\right),
\end{align}

where $\mathcal{H}$ is a Hankel contour around $(-\infty,0]$, oriented positively.  By the multinomial theorem,
\begin{displaymath}
\left(\log\frac{1}{s}+\sum_{0\le j\le N}\frac{h^{j}(0)}{j!}s^{j}\right)^k = \mathop{\sum_{k=m+k_0+\cdots+k_{N}}}_{m,k_0,\ldots,k_{N}\ge 0}\left(\log\frac{1}{s}\right)^m \frac{h(0)^{k_0}}{k_0!}\cdots \frac{\big(h^{(N)}(0)s^{N}/N!\big)^{k_{N-1}}}{k_{N}!}\frac{k!}{m!}.
\end{displaymath}
Thus we have
\begin{align}
\mathcal{S}_k(x) &=  O_N\left(\frac{(\log_2 x)^{k-1}}{(\log x)^{N+1}}\right) \ + \\
&\mathop{\sum_{k=m+k_0+\cdots+k_{N}}}_{m,k_0,\ldots,k_{N}\ge 0}\frac{h(0)^{k_0}}{k_0!}\cdots \frac{\big(h^{(N)}(0)/N!\big)^{k_{N}}}{k_{N}!} \frac{k!}{m!} I(m,k_1+2k_2+\cdots +Nk_{N},x)\nonumber
\end{align}
where
\begin{displaymath}
I(m,M,x) := \frac{1}{2\pi i}\int_{\mathcal{H}} \left(\log\frac{1}{s}\right)^m s^M x^s\frac{ds}{s}.
\end{displaymath}

Now consider Hankel's formula for the gamma function (\cite[p.\ 2]{tenenbaum2016generalized}),
\begin{align}\label{eq:HankelGamma}
\frac{1}{2\pi i}\int_{\mathcal{H}} \frac{x^s}{s^{1+z}} ds = \frac{(\log x)^z}{\Gamma(z+1)}.
\end{align}

Differentiating $m$ times with respect to $z$ and evaluating at $z=-M$, we deduce
\begin{align}\label{imnx}
I(m,M,x) = \frac{1}{2\pi i} \int_{\mathcal{H}} \left(\log\frac{1}{s}\right)^m s^M x^s\frac{ds}{s} 
&= \frac{d^m}{dz^m} \frac{\left(\log x\right)^z}{\Gamma(z+1)} \Big|_{z=-M} \nonumber \\
&= \sum_{j=0}^m \binom{m}{j} \frac{(\log_2 x)^j}{(\log x)^M}\left(\frac{1}{\Gamma}\right)^{(m-j)}(1-M).
\end{align}
Hence letting $M:=k_1+2k_2+\cdots + Nk_{N}$, we obtain
\begin{align}\label{explicitsk}
\mathcal{S}_k(x) &=  O_N\left(\frac{(\log_2 x)^{k-1}}{(\log x)^{N+1}}\right) \ + \\
& \ \mathop{\sum_{k=m+k_0+\cdots+k_{N}}}_{m,k_0,\ldots,k_{N}\ge 0}\frac{h(0)^{k_0}}{k_0!}\cdots \frac{\big(h^{(N)}(0)/N!\big)^{k_{N}}}{k_{N}!} \sum_{j=0}^m \frac{k!}{j!(m-j)!}\frac{(\log_2 x)^j}{(\log x)^M}\left(\frac{1}{\Gamma}\right)^{(m-j)}(1-M).\nonumber
\end{align}

Note that we need only consider the terms with $M=k_1+2k_2+\ldots+ Nk_{N}\le N$, since the remaining terms are absorbed by the error term.  By Lemma \ref{hprop}, we have $h(0)=\beta-\gamma$ and $h^{(n)}(0) = (-1)^{n-1}n\alpha_n$ for $n\ge 1$.  

Furthermore, the derivatives of $1/\Gamma$ at $1-M$ can be computed using the technique of Lemma \ref{gammaderivatives}.  We have the following useful simplifications.  Note that by Lemma \ref{gammaderivatives}, we have

\begin{equation}\label{i0bx}
    I(0,M,x)=0,~~(M\ge 1)
\end{equation} 

and 

\begin{equation}\label{i1bx}
    I(1,M,x)=\frac{(-1)^{M-1}(M-1)!}{\log^M x},~~(M\ge 1).
\end{equation}  

Now consider $k=2$. In this case, aside from the choice $m=0, k_0=2$, by \eqref{i0bx} we require $m\ge1$, so the only additional contributing partitions of 2 are given by $m=2$, or by $m=1$ and $k_j=1$ for some unique $j < N$.
Thus \eqref{explicitsk} simplifies as
\begin{align*}
&\mathcal{S}_2(x) +  O_N\left(\frac{\log_2 x}{(\log x)^{N+1}}\right)\\ 
& = \mathop{\sum_{2=m+k_0+\cdots+k_{N}}}_{m,k_0,\ldots,k_{N}\ge 0}\frac{h(0)^{k_0}}{k_0!}\cdots \frac{\big(h^{(N)}(0)/(N-1)!\big)^{k_{N}}}{k_{N}!}\frac{2}{m!} I(m,M,x) \\
&= I(2,0,x) + h(0)^2I(0,0,x)\ + 2
\mathop{\sum_{1=k_0+\cdots+k_{N}}}_{m,k_0,\ldots,k_{N}\ge 0}\frac{h(0)^{k_0}}{k_0!}\cdots \frac{\big(h^{(N)}(0)/N!\big)^{k_{N}}}{k_{N}!} I(1,M,x) \\
&= I(2,0,x) + h(0)^2I(0,0,x) + 2\sum_{0\le j \le N}\big(h^{(j)}(0)/j!\big) I(1,j,x)
\end{align*}
where $M=k_1+2k_2+\ldots+Nk_{N}$. Recalling Lemma \ref{hprop} and \eqref{i1bx}, we have
\begin{align*}
&\mathcal{S}_2(x) +  O_N\left(\frac{\log_2 x}{(\log x)^{N+1}}\right)\\
&=I(2,0,x) + h(0)^2I(0,0,x) + 2h(0)I(1,0,x) + 2\sum_{1\le j\le N}\frac{(-1)^{j-1}j\alpha_j}{j!}\frac{(-1)^{j-1}(j-1)!}{\log^j x}\\
&= (\log_2 x)^2 +2\gamma\log_2 x + \gamma^2 - \zeta(2) + (\beta-\gamma)^2 + 2(\beta-\gamma)(\log_2 x + \gamma) +\sum_{1\le j\le  N}\frac{2\alpha_j}{\log^j x}\\
&= (\log_2 x + \beta)^2 - \zeta(2) + \sum_{1\le j\le N}\frac{2\alpha_j}{\log^j x}.
\end{align*}
Here we also used \eqref{gammaexpansion} to compute the derivatives of $1/\Gamma$ at $1$.

Now consider $k=3$. In this case \eqref{explicitsk} simplifies as
\begin{align*}
\mathcal{S}_3(x) &=  O_N\left(\frac{(\log_2 x)^2}{(\log x)^{N+1}}\right) \ + \\
&\mathop{\sum_{3=m+k_0+\cdots+k_{N}}}_{m,k_0,\ldots,k_{N}\ge 0}\frac{h(0)^{k_0}}{k_0!}\cdots \frac{\big(h^{(N)}(0)/N!\big)^{k_{N}}}{k_{N}!} \frac{3!}{m!}I(m,M,x).\nonumber
\end{align*}

By \eqref{i0bx}, when $M\ge 1$ we only need to consider terms $I(m,M,x)$ with $m\ge 1$.  We therefore have the following contributing partitions of $3$, arranged in order of increasing $M$:

\begin{enumerate}
    \item $m=3, k_j=0$ for all $j\ge 0$, $M=0$.
    \item $m=2, k_0=1, k_j=0$ for all $j\ge 1$, $M=0$.
    \item $m=1, k_0=2, k_j=0$ for all $j\ge 1$, $M=0$.
    \item $m=0, k_0=3, k_j=0$ for all $j\ge 1$, $M=0$.
    \item $m=2, k_j=1$ for some $j\ge 1$, $k_i=0$ for all $i\ne j$, $M=j$.\label{k=3case5}
    \item $m=1, k_0=1, k_j=1$ for some $j\ge 1$, $k_i=0$ if $i\ge 1$ and $i\ne j$, $M=j$.\label{k=3case6}
    \item $m=1$, $k_j=2$ for some $j\ge 1$, $k_i=0$ for all $i\ne j$, $M=2j$.\label{k=3case7}
    \item $m=1, k_i=k_j=1$ for some pair $i,j\ge 1$ where $i\ne j$, $k_{\ell}=0$ if $\ell\ne i$ and $\ell\ne j$, $M=i+j$.\label{k=3case8}
\end{enumerate}

The terms $M=0$ are addressed by \cite{tenenbaum2016generalized}, so we consider the last four cases where $M\ge 1$.  Note that the second-to-last case only occurs when $2|M$.  For fixed $M\ge 1$, we therefore find that the coefficient of the term $1/\log^M x$ in the expansion of $\mathcal{S}_3(x)$ is given by

\begin{equation}\label{k=3}
\begin{aligned}
 \frac{3h^{(M)}(0)}{M!}I(2,M,x)+\frac{6h(0)h^{(M)}(0)}{M!}I(1,M,x)+3I(1,M,x)\sum_{1\le j\le M-1}\frac{h^{(j)}(0)}{j!}\frac{h^{(M-j)}(0)}{(M-j)!}.
 \end{aligned}
\end{equation}
Specifically, the first, second, and third summands in \eqref{k=3} come from items \ref{k=3case5}, \ref{k=3case6}, and \ref{k=3case7}-\ref{k=3case8}, respectively.  By Lemma \ref{hprop}, the expression in \eqref{k=3} is equal to

\begin{displaymath}
\begin{aligned}
 \frac{3(-1)^{M-1}\alpha_M}{(M-1)!}I(2,M,x)&+\frac{6(\beta-\gamma)(-1)^{M-1}\alpha_M}{(M-1)!}I(1,M,x)\\
 &\quad +3I(1,M,x)\sum_{1\le j\le M-1}\frac{(-1)^{j-1}\alpha_j}{(j-1)!}\frac{(-1)^{M-1-j}\alpha_{M-j}}{(M-1-j)!}.
 \end{aligned}
 \end{displaymath}
By \eqref{i1bx} we have $I(1,M,x)=(-1)^{M-1}(M-1)!/\log^M x$.  By Lemma \ref{gammaderivatives} and \eqref{imnx}, we have
\begin{displaymath}
 I(2,M,x) = \frac{2(-1)^M (M-1)!}{\log^M x}\left(\sum_{1\le j\le M-1}\frac{1}{j} - \gamma - \log_2 x\right).
\end{displaymath}
Combining these results, we obtain the desired estimate \eqref{s3} for $\mathcal{S}_3(x)$, noting that the error term can be reduced to $O_N\big((\log_2 x)/(\log x)^{N+1}\big)$ by adding an extra term to the sum corresponding to $j=N$.

It remains to prove \eqref{vnestimate}. For this we use the estimate
\begin{equation}\label{harmonicsum}
    \sum_{1\le j\le N-1}1/j = \sum_{1\le j\le N-1}1/(N-j)=\log N + O(1).
\end{equation}
From \eqref{vn}, we have
\begin{align}
v_N &= 6\alpha_N\sum_{1\le j\le N-1}\frac{1}{j}+3\sum_{1\le j\le N-1}j\binom{N-1}{j}\alpha_j\alpha_{N-j}.
\end{align}
We use \eqref{harmonicsum} for the first sum above. By Theorem \ref{mainthm} we have $\alpha_j = \big(1+O(j^{-2})\big)(j-1)!2^{j-1}/j$ so the second sum above is
\begin{align*}
v_N - 6\alpha_N(\log N+O(1)) &= 3\sum_{1\le j\le N-1}j\binom{N-1}{j}\alpha_j\alpha_{N-j} \\
&= 3\cdot 2^{N-2}(N-1)!\sum_{1\le j\le N-1}\frac{1}{j(N-j)}\left(1+O(j^{-2})\right)\left(1+O((N-j)^{-2})\right)\\
&=3\frac{2^{N-2}(N-1)!}{N}\sum_{1\le j\le N-1}\left(\frac{1}{j}+\frac{1}{N-j}\right)\left(1+O(\min(j,N-j)^{-2})\right)\\
&=3\frac{2^{N-1}(N-1)!}{N}(\log N+O(1))\\
&= 3(\alpha_N+O(N^{-2}))(\log N+O(1))
\end{align*}
again by Theorem \ref{mainthm}. Hence $v_N = 9\alpha_N(\log N+O(1))$ as desired. This completes the proof of estimate \eqref{vnestimate} and of Proposition \ref{s2s3}.

\end{proof}

Using the same methods as those in the proof of Proposition \ref{s2s3}, we may also consider the coefficient of $1/\log x$ in the $\mathcal{S}_k(x)$ expansion for arbitrary $k\ge 1$.  For $k=1$, the $1/\log x$ term in the $\mathcal{S}_1(x)$ expansion is $0$, while for $k=2$, the $1/\log x$ term in the $\mathcal{S}_2(x)$ expansion is 
\begin{displaymath}
\frac{2\alpha_1}{\log x}.
\end{displaymath}

For $k=3$, the $1/\log x$ term in the $\mathcal{S}_3(x)$ expansion is 
\begin{displaymath}
\frac{6\alpha_1(\log_2 x+\beta)}{\log x}.
\end{displaymath}

Regarding the case $k=4$, note that the $1/\log x$ term in the $\mathcal{S}_4(x)$ expansion is 

\begin{equation}\label{s41}
\frac{12\alpha_1((\log_2 x+\beta)^2-\zeta(2))}{\log x}.
\end{equation}

Similarly, regarding the case $k=5$, the $1/\log x$ term in the $\mathcal{S}_5(x)$ expansion is 

\begin{displaymath}
\frac{20\alpha_1((\log_2 x+\beta)^3-3\zeta(2)(\log_2 x+\beta)+2\zeta(3))}{\log x}.
\end{displaymath}

Thus for all $k\le 5$, the $1/\log x$ term in the $\mathcal{S}_k(x)$ expansion is
\begin{align}
\frac{k(k-1)\alpha_1S_{k-2}(\log_2 x)}{\log x},
\end{align}
perhaps suggesting that this formula holds in general.

We also establish the following result regarding $\mathcal{S}_4(x)$.

\begin{prop}
The $1/\log^2 x$ term in the fine-scale expansion of $\mathcal{S}_4(x)$ is given by
\begin{equation}\label{s42}
\frac{12\alpha_2((\log_2 x+\beta)^2-\zeta(2))-12(\alpha_1^2+2\alpha_2)(\log_2 x+\beta)+12\alpha_1^2}{\log^2 x}.
\end{equation}
\end{prop}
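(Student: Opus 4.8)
The plan is to repeat the method of the proof of Proposition~\ref{s2s3}, now with $k=4$ and extracting the coefficient of $1/\log^2 x$. Starting from the explicit formula \eqref{explicitsk} with $k=4$ and $N\ge2$, the $1/\log^2 x$ contribution consists precisely of those partitions $4=m+k_0+k_1+k_2$ with $M:=k_1+2k_2=2$, and by \eqref{i0bx} one may discard all partitions with $m=0$. This leaves the five partitions $(m,k_0,k_1,k_2)\in\{(2,0,2,0),\,(1,1,2,0),\,(3,0,0,1),\,(2,1,0,1),\,(1,2,0,1)\}$, with respective multinomial weights $6h'(0)^2$, $12h(0)h'(0)^2$, $2h''(0)$, $6h(0)h''(0)$, $6h(0)^2h''(0)$. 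Hence the $1/\log^2 x$ coefficient equals
$$6h'(0)^2 I(2,2,x)+12h(0)h'(0)^2 I(1,2,x)+2h''(0) I(3,2,x)+6h(0)h''(0) I(2,2,x)+6h(0)^2 h''(0) I(1,2,x).$$

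Next I would substitute the values from Lemma~\ref{hprop}, namely $h(0)=\beta-\gamma$, $h'(0)=\alpha_1$, and $h''(0)=-2\alpha_2$, and evaluate the three quantities $I(1,2,x)$, $I(2,2,x)$, $I(3,2,x)$ via \eqref{imnx}. This requires the derivatives $(1/\Gamma)^{(i)}(-1)$ for $i\le 3$: Lemma~\ref{gammaderivatives} supplies $(1/\Gamma)(-1)=0$, $(1/\Gamma)'(-1)=-1$, $(1/\Gamma)''(-1)=2(1-\gamma)$, while the recursive extension sketched in the remark following that lemma (using $\Gamma''(2)=\gamma^2-2\gamma+\zeta(2)$ together with $3\zeta(2)=\pi^2/2$) yields $(1/\Gamma)'''(-1)=6\gamma-3\gamma^2+3\zeta(2)$. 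Plugging these in gives $I(1,2,x)=-1/\log^2 x$ (consistent with \eqref{i1bx}), $I(2,2,x)=-2(\log_2 x+\gamma-1)/\log^2 x$, and $I(3,2,x)=-3\big((\log_2 x+\gamma)^2-2(\log_2 x+\gamma)-\zeta(2)\big)/\log^2 x$.

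Finally I would combine the five terms and simplify. Writing $y:=\log_2 x+\gamma$ and $h_0:=\beta-\gamma$, so that $y+h_0=\log_2 x+\beta$, the terms involving $\alpha_1^2$ collapse to $-12\alpha_1^2(y+h_0-1)$, and the terms involving $\alpha_2$ collapse, after completing the square in $y+h_0$, to $12\alpha_2\big((y+h_0)^2-2(y+h_0)-\zeta(2)\big)$; adding these and rewriting in terms of $\log_2 x+\beta$ produces exactly \eqref{s42}. The argument is a finite computation throughout; the only mild obstacles are the correct enumeration of the five contributing partitions together with their multinomial coefficients, and the evaluation of $(1/\Gamma)'''(-1)$, which is where one invokes the recursive extension of Lemma~\ref{gammaderivatives}.
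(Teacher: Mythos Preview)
Your proposal is correct and follows essentially the same approach as the paper: enumerate the partitions of $4$ with $M=2$ and $m\ge1$, compute the corresponding weights via Lemma~\ref{hprop}, evaluate $I(1,2,x)$, $I(2,2,x)$, $I(3,2,x)$ using \eqref{imnx} together with Lemma~\ref{gammaderivatives} and its recursive extension, and simplify. The only cosmetic difference is that the paper groups your five weighted terms into three after substituting the values of $h(0),h'(0),h''(0)$, obtaining $-4\alpha_2 I(3,2,x)+(6\alpha_1^2-12\alpha_2(\beta-\gamma))I(2,2,x)+12((\beta-\gamma)\alpha_1^2-\alpha_2(\beta-\gamma)^2)I(1,2,x)$, which is identical to your expression.
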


\begin{proof} 
We follow the proof of Proposition \ref{s2s3} above.  When $k=4$ the integrand in the first displayed equation is 
\begin{displaymath}
 \left(\log\frac{1}{s}+\sum_{0\le j<N}\frac{h^{j}(0)}{j!}s^{j}\right)^4 \frac{x^s}{s}.
\end{displaymath}
We apply the multinomial theorem, collecting all terms exactly divisible by $s^2$.  Recalling that $I(0,N,x)=0$ for all integers $N\ge 1$, and using Lemma \ref{hprop}, we find that the $1/\log^2 x$ term of $\mathcal{S}_4(x)$ simplifies to
\begin{displaymath}
\begin{aligned}
 &-4\alpha_2 I(3,2,x) + (6\alpha_1^2 -12 \alpha_2(\beta-\gamma))I(2,2,x)\\
 &\qquad +12((\beta-\gamma)\alpha_1^2 - \alpha_2(\beta-\gamma)^2)I(1,2,x).
 \end{aligned}
\end{displaymath}
We then use \eqref{imnx} with $M=2$.  Specifically, we can evaluate the derivatives of $1/\Gamma$ at $1-M=-1$ by using \eqref{gammaexpansion} and the techniques of Lemma \ref{gammaderivatives}.  We obtain
\begin{displaymath}
\begin{aligned}
 &I(1,2,x) = -1/\log^2 x,\\
 &I(2,2,x) = - 2(\log_2 x + \gamma -1)/\log^2 x,\\
 &I(3,2,x) = -3((\log_2 x)^2 + (2\gamma-2)\log_2 x - 2\gamma +\gamma^2 - \zeta(2))/\log^2 x.
\end{aligned}
\end{displaymath}
We complete the proof by combining these results.
\end{proof}

We also have the following improvement for the asymptotic estimate for $\mathcal{S}_k$, which applies to $\mathcal{R}_k$ as well (upon dividing by $k!$).

\begin{prop}
For all $k \ge 1$ and $M \ge 1$, the leading term of $\mathcal{S}_k(x)$ with denominator $\log^M x$ is given by
\begin{displaymath}
 \frac{k(k-1)\alpha_M (\log_2 x)^{k-2}}{\log^M x}.
\end{displaymath}
Therefore, the general term with denominator $\log^M x$ is given by
\begin{displaymath}
 \frac{k(k-1)\alpha_M (\log_2 x)^{k-2}(1+O(1/\log_2 x))}{\log^M x}.
\end{displaymath}
\end{prop}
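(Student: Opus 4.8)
The plan is, following the proof of Proposition~\ref{s2s3}, to read off the coefficient of $1/\log^M x$ directly from the explicit expansion \eqref{explicitsk} of $\mathcal{S}_k(x)$ --- taken with the truncation parameter $N\ge M$, so that this contribution appears explicitly rather than inside the error term --- and then to extract its top power of $\log_2 x$. Recall that in \eqref{explicitsk} a partition $k=m+k_0+\cdots+k_N$ produces a block of terms with denominator $(\log x)^M$, where $M=k_1+2k_2+\cdots+Nk_N$, carrying the multinomial prefactor $\prod_i(h^{(i)}(0)/i!)^{k_i}/k_i!$, and inside the block the power $(\log_2 x)^j$ for $0\le j\le m$ is weighted by $k!/(j!\,(m-j)!)$ and by $(1/\Gamma)^{(m-j)}(1-M)$. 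So everything reduces to the combinatorics of these partitions together with the values of $h^{(n)}(0)$ (Lemma~\ref{hprop}) and of $(1/\Gamma)^{(r)}(1-M)$ for small $r$ (Lemma~\ref{gammaderivatives}).

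First I would bound the degree in $\log_2 x$. Since $M\ge1$ forces $k_1+\cdots+k_N\ge1$, we get $m=k-k_0-(k_1+\cdots+k_N)\le k-1$, so every term has $j\le k-1$. The extremal case $m=k-1$ requires $k_0=0$ and exactly one $k_i=1$ with $i\ge1$; to land in the denominator $(\log x)^M$ this forces $i=M$, i.e. $k_M=1$ and all other $k_i=0$. For that single partition the $j=m=k-1$ term carries the factor $(1/\Gamma)(1-M)$, which vanishes for $M\ge1$ by Lemma~\ref{gammaderivatives}; hence the genuine top term is the $j=k-2$ one. Evaluating it, the prefactor is $h^{(M)}(0)/M!=(-1)^{M-1}\alpha_M/(M-1)!$ by Lemma~\ref{hprop}, the binomial coefficient is $k!/((k-2)!\,1!)=k(k-1)$, and $(1/\Gamma)'(1-M)=(-1)^{M-1}(M-1)!$ by \eqref{eq17}; the two sign factors cancel and the two factorials cancel, leaving precisely $k(k-1)\alpha_M(\log_2 x)^{k-2}/\log^M x$.

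Next I would check that nothing else feeds $(\log_2 x)^{k-2}$ into the $1/\log^M x$ coefficient. The only remaining route to $j=k-2$ is $m=k-2$ (so $k_0+\cdots+k_N=2$) with $j=m$, but then the weight again contains $(1/\Gamma)(1-M)=0$; so all such partitions contribute only powers $(\log_2 x)^j$ with $j\le k-3$. Consequently the coefficient of $1/\log^M x$, summed over all partitions with $k_1+2k_2+\cdots+Nk_N=M$, is a polynomial in $\log_2 x$ of degree at most $k-2$ whose degree-$(k-2)$ coefficient equals $k(k-1)\alpha_M$. Since $\alpha_M\ne0$ by Theorem~\ref{mainthm}, this polynomial is $k(k-1)\alpha_M(\log_2 x)^{k-2}(1+O(1/\log_2 x))$, which is the claimed ``general term''. (For $k=1$ the factor $k(k-1)=0$ correctly records the absence of any $1/\log^M x$ term in $\mathcal{S}_1(x)=\mathcal{R}_1(x)$, and $k=2$ recovers the constant $2\alpha_M$ of \eqref{s2}.)

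The main obstacle is organizational rather than analytic: one must be disciplined about the bookkeeping of the partitions $k=m+k_0+\cdots+k_N$ and, crucially, exploit the zeros of $1/\Gamma$ at the non-positive integers to discard exactly the partitions that would otherwise push the power of $\log_2 x$ up to $k-1$. Beyond Lemmas~\ref{hprop} and~\ref{gammaderivatives} (in particular \eqref{eq17}) and Theorem~\ref{mainthm}, no new input is required.
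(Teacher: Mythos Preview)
Your proposal is correct and follows essentially the same route as the paper's own proof: both arguments read off the $1/\log^M x$ block from the expansion \eqref{explicitsk}, use the vanishing of $1/\Gamma$ at the nonpositive integers to eliminate the would-be $(\log_2 x)^{k-1}$ contribution (and the $j=m$ contribution from $m=k-2$), and then evaluate the surviving $m=k-1$, $j=k-2$ term via Lemma~\ref{hprop} and \eqref{eq17}. Your write-up is in fact a bit more explicit than the paper's, in particular in noting that $\alpha_M\neq0$ (Theorem~\ref{mainthm}) is needed for the second displayed statement.
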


\begin{proof}
This is already established for $k=1,2$, so we assume $k \ge 3$.  Following \eqref{explicitsk}, we see that $m=k$ forces $M=0$, so this case can be ruled out and we must have $m<k$.  For $m=k-1$, the only term in the innermost sum that will produce $(\log_2 x)^{k-1}$ is for $j=m=k-1$, and in this case the coefficient is given by $1/\Gamma(1-M)=0$ due to the poles of $\Gamma$ at the non-positive integers.  Thus, the highest power of $\log_2 x$ appearing in such a term must be at most $k-2$.

Such a term can only appear when $m\ge k-2$ and in that case comes from the $j=k-2$ term in the innermost sum in \eqref{explicitsk}.  By the same argument about the poles of $\Gamma$, $m \neq k-2$, and so we must have $m=k-1$.  This means that all but one of the $k_i$'s must be 0, while the lone exception is 1.  Moreover, $M\ge 1$ implies that this lone exception cannot be $k_0$.  Let $k_i=1$ be this exception.

Now, since we have $m=k-1$ and $j=k-2$, we can use Lemma \ref{hprop} and \eqref{eq17} to see that this term's coefficient is
\begin{displaymath}
 \frac{h^{(i)}(0)}{i!} k(k-1) \frac{(\log_2 x)^{k-2}}{\log^M x} (-1)^{M-1} (M-1)!.
\end{displaymath}
Since $M=i$, this gives the result.
\end{proof}

We now consider the next term.

\begin{prop}\label{k-3}
For all $M\ge 1$, the coefficient of $(\log_2 x)^{k-3}/\log^M x$ in $\mathcal{S}_k(x)$ is given by

\begin{displaymath}
k(k-1)(k-2)\bigg(\alpha_M\big(\beta-h_1(M-1)\big)-\frac{1}{2}\sum_{1\le i\le M-1}i\binom{M-1}{i}\alpha_i\alpha_{M-i}\bigg)
\end{displaymath}
where $h_1(n):=\sum_{1\le i\le n}1/i$.
\end{prop}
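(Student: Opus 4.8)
The plan is to follow the method of Proposition \ref{s2s3}, reading the coefficient directly off the explicit expansion \eqref{explicitsk}. Fix $M\ge 1$; we may assume $k\ge 3$. A factor $(\log_2 x)^{k-3}$ can arise only from the term $j=k-3$ in the inner sum of \eqref{explicitsk}, which forces $m\ge k-3$. Since $(1/\Gamma)(1-M)=0$ for $M\ge 1$ by Lemma \ref{gammaderivatives}, the case $m=k-3$ (where $m-j=0$) contributes nothing to the $1/\log^M x$ coefficient, and the case $m=k$ forces all $k_i=0$ and hence $M=0$. So only $m=k-1$ and $m=k-2$ survive, and the first step is to enumerate the associated partitions: for $m=k-1$ one has $k_0+\cdots+k_N=1$, and since $M\ge 1$ the lone nonzero part must be $k_M=1$, giving a single partition that contributes through $(1/\Gamma)''(1-M)$; for $m=k-2$ one has $k_0+\cdots+k_N=2$ with $M=k_1+2k_2+\cdots+Nk_N\ge 1$, leaving three families, namely (i) $k_0=1$ together with $k_M=1$, (ii) $k_i=2$ for a single $i\ge 1$ with $M=2i$, and (iii) $k_i=k_{i'}=1$ for distinct $i,i'\ge 1$ with $i+i'=M$, each contributing through $(1/\Gamma)'(1-M)=(-1)^{M-1}(M-1)!$.

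The next step is to collect these contributions, substituting $h(0)=\beta-\gamma$ and $h^{(n)}(0)=(-1)^{n-1}n\alpha_n$ from Lemma \ref{hprop} together with $(1/\Gamma)'(1-M)=(-1)^{M-1}(M-1)!$ and $(1/\Gamma)''(1-M)=2(-1)^M(M-1)!\big(h_1(M-1)-\gamma\big)$ from Lemma \ref{gammaderivatives}. I expect the $m=k-1$ partition to yield $k(k-1)(k-2)\,\alpha_M\big(\gamma-h_1(M-1)\big)$ and family (i) to yield $k(k-1)(k-2)\,\alpha_M(\beta-\gamma)$, whose sum is $k(k-1)(k-2)\,\alpha_M\big(\beta-h_1(M-1)\big)$ once the $\gamma$'s cancel. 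Families (ii) and (iii) produce the bilinear terms $\alpha_i\alpha_{M-i}$; after simplifying the factorials via $(M-1)!/\big(i!\,(M-i)!\big)=\binom{M-1}{i}/(M-i)$ and using the symmetry $i\binom{M-1}{i}=(M-i)\binom{M-1}{M-i}$, family (ii) should supply exactly the $i=M/2$ summand (present only for even $M$) and family (iii), summed over unordered pairs $\{i,M-i\}$, the remaining summands, together giving $-\tfrac12 k(k-1)(k-2)\sum_{1\le i\le M-1}i\binom{M-1}{i}\alpha_i\alpha_{M-i}$. Adding the two pieces produces the claimed formula.

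The main obstacle is the combinatorial bookkeeping in that last step: one must check that the single ``diagonal'' partition $k_i=2$ with $M=2i$ reproduces precisely the $i=M/2$ summand, while each unordered off-diagonal pair $\{i,M-i\}$ from family (iii) is counted once but, by the displayed symmetry, equals both the $i$th and the $(M-i)$th summand, so that the ordered sum $\sum_{i=1}^{M-1}$ is recovered with the correct factor $\tfrac12$; one also has to be careful that the $\gamma$ appearing in $(1/\Gamma)''(1-M)$ is exactly cancelled by the $h(0)=\beta-\gamma$ coming from family (i), leaving $\beta-h_1(M-1)$. A convenient consistency check on the signs is that for $k=3$ this coefficient should equal $6\alpha_M\beta-v_M$ with $v_M$ as in \eqref{vn}, matching the expansion \eqref{s3} of $\mathcal{S}_3(x)$.
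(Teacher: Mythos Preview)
Your proposal is correct and follows essentially the same approach as the paper: the same case split into $m=k-1$ and $m=k-2$ (after ruling out $m=k-3$ via the pole of $\Gamma$ and $m=k$ via $M=0$), the same three subfamilies in the $m=k-2$ case, and the same use of Lemmas \ref{hprop} and \ref{gammaderivatives} to evaluate each contribution. Your additional remarks on the combinatorial bookkeeping (the diagonal versus off-diagonal pairs and the $\gamma$-cancellation) and the $k=3$ consistency check with \eqref{vn} are helpful elaborations but do not depart from the paper's argument.
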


\begin{proof}
Let $M\ge 1$.  We apply \eqref{explicitsk} with $N=M$, so that $k_1+2k_2+\cdots +Mk_M=M$.  We sum all terms such that $j=k-3$.  We must have $m\ge k-3$ in \eqref{explicitsk}, and in fact $m\ge k-2$.  Indeed, when $m=k-3$ we have $m-j=0$ and the contribution from this term is $0$ due to the poles of $\Gamma$.  Also, $m\le k-1$, because $m=k$ implies $M=0$, a contradiction.  This leaves only two cases: $m=k-1$ and $m=k-2$.
\begin{enumerate}
    \item Case I: $m=k-1$.  In this case $k_0+\ldots+k_M=1$.  Also $k_0=0$ because $k_1+2k_2+\ldots+Mk_M=M\ge 1$.  Thus $k_i=1$ for a unique $i\ge 1$, and $k_j=0$ otherwise.  Thus $M=i$ and we obtain from (40) and Lemma \ref{hprop} the expression
    \begin{displaymath}
    \frac{(-1)^{i-1}\alpha_i}{(i-1)!}\frac{k(k-1)(k-2)}{2}\left(\frac{1}{\Gamma}\right)''(1-M) = -\alpha_M k(k-1)(k-2)(h_1(M-1)-\gamma)
    \end{displaymath}
    by Lemma \ref{gammaderivatives}.
    \item Case II: $m=k-2$.  In this case $k_0+\ldots +k_M=2$ which leaves three possibilities.
    \begin{enumerate}
        \item $k_0=1$, $k_i=1$ for a unique $i\ge 1$, and $k_j=0$ otherwise.  Thus $M=i$, and by \eqref{explicitsk}, Lemma \ref{gammaderivatives}, and Lemma \ref{hprop}, the resulting expression is
        \begin{displaymath}
         k(k-1)(k-2)(\beta-\gamma)\alpha_M.
         \end{displaymath}
         \item $k_0=0$, $k_i=2$ for some $i\ge 1$, and $k_j=0$ otherwise.  Thus $M=2i$ and we obtain the expression
         \begin{displaymath}
          \frac{1}{2}\frac{\alpha_i^2}{(i-1)!}k(k-1)(k-2)(-1)^{M-1}\frac{(M-1)!}{(i-1)!}.
         \end{displaymath}
         Note that $(-1)^{M-1}=-1$ because $M=2i$ is even.
         \item $k_0=0$, $k_i=k_j=1$ for some $i\ne j$, $i,j\ge 1$, and $k_{\ell}=0$ otherwise.  In this case $M=i+j$, and we obtain the expression
         \begin{displaymath}
          \frac{1}{2}\frac{(-1)^{i-1}\alpha_i}{(i-1)!}\frac{(-1)^{j-1}\alpha_j}{(j-1)!}k(k-1)(k-2)(-1)^{i+j-1}(M-1)!
         \end{displaymath}
         where the factor $1/2$ arises due to the cases $i<j$ and $i>j$ representing the same term.  Writing $j=M-i$ in the final two cases and summing over $i$, the contribution from the final two cases is
         \begin{displaymath}
          -\frac{1}{2}k(k-1)(k-2)\sum_{1\le i\le M-1}i\binom{M-1}{i}\alpha_i\alpha_{M-i}.
         \end{displaymath}
    \end{enumerate}
\end{enumerate}
This completes the proof.
\end{proof}

\section{Numbers with three prime factors}\label{R3sec}

In this section we establish Theorem \ref{R3thm1}.  By \cite[Proposition 3.2]{HME} we have
\begin{equation}\label{R3}
\mathcal{R}_3(x) = \frac{1}{3!}\left(\sum_{pqr\le x}\frac{1}{pqr} + 3\sum_{pq^2\le x}\frac{1}{pq^2}+2\sum_{p^3\le x}\frac{1}{p^3}\right).
\end{equation}
The leftmost sum in \eqref{R3} above equals $\mathcal{S}_3(x)$ and we use the fine-scale asymptotic given in Proposition \ref{s2s3} above.  Next, by Lemma \ref{primesquaretail}, the rightmost sum in \eqref{R3} is
\begin{displaymath}
\sum_{p\le x^{1/3}}\frac{1}{p^3} = P(3) - \sum_{p > x^{1/3}}\frac{1}{p^3} = P(3) + O(x^{-2/3})
\end{displaymath}
so that
\begin{align}\label{R33}
\mathcal{R}_3(x) = \frac{1}{6}\mathcal{S}_3(x) + \frac{1}{2}\sum_{pq^2\le x}\frac{1}{pq^2}+\frac{1}{3}P(3) + O(x^{-2/3}).
\end{align}

For the sum over $pq^2$ in \eqref{R33}, we follow the approach in \cite[Proposition 3.3]{HME}. Recall that $E(t):=\sum_{p\le t}p^{-1}-\log_2 t-\beta$, so by Mertens' second theorem
\begin{align}\label{eq:pq2x}
\sum_{pq^2\le x}\frac{1}{pq^2} = \sum_{q\le \sqrt{\frac{x}{2}}}\frac{1}{q^2}\sum_{p\le \frac{x}{q^2}}\frac{1}{p} = \sum_{q\le \sqrt{\frac{x}{2}}}\left(\frac{\log_2\frac{x}{q^2}}{q^2}+\frac{\beta}{q^2}+\frac{E\left(\frac{x}{q^2}\right)}{q^2}\right).
\end{align}
By Lemma \ref{primesquaretail}, we have
\begin{align}\label{eq:rootE}
\sum_{q\le \sqrt{\frac{x}{2}}}\frac{\beta}{q^2} = P(2)\beta + O_A(\log^{-A}x)
\end{align}
for all $A>0$.  Moreover, we claim
\begin{align}\label{eq:rootx2E}
\sum_{q\le \sqrt{\frac{x}{2}}}\frac{E(x/q^2)}{q^2} \ll_A\log^{-A}x
\end{align}
for all $A>0$. Indeed, by the prime number theorem
\begin{equation}\label{mertens2error}
E(x):= \sum_{p\le x}\frac{1}{p} -\log_2 x-\beta\ll_A \log^{-A}x.
\end{equation}
So by splitting the sum in \eqref{eq:rootx2E} at $x^{1/3}$,
\begin{displaymath}
\sum_{x^{1/3}<q\le \sqrt{x/2}}\frac{E(x/q^2)}{q^2} \ll \sum_{x^{1/3}<q}\frac{1}{q^2}\ll \frac{1}{x^{1/3}\log(x^{1/3})}\ll_A \log^{-A}x
\end{displaymath}
by \eqref{mertens2error} and Lemma \ref{primesquaretail}.  Also by \eqref{mertens2error} we have
\begin{displaymath}
\sum_{q\le x^{1/3}}\frac{E(x/q^2)}{q^2} \ll_A \sum_{q\le x^{1/3}}\frac{1}{q^2\log^A(x/q^2)}\ll_A \frac{1}{\log^A x}\sum_q \frac{1}{q^2}\ll_A\log^{-A} x
\end{displaymath}
for all $A>0$. These combine to give \eqref{eq:rootx2E}. Hence plugging \eqref{eq:rootE}, \eqref{eq:rootx2E} back into \eqref{eq:pq2x} gives
\begin{align}
\sum_{pq^2\le x}\frac{1}{pq^2} = \sum_{q\le \sqrt{\frac{x}{2}}}\frac{\log_2(x/q^2)}{q^2} \ +\  P(2)\beta + O_A(\log^{-A} x).
\end{align}

For the main term we proceed as in \cite[Proposition 3.3]{HME}. By partial summation, we have
\begin{align*}
\sum_{q\le \sqrt{\frac{x}{2}}}\frac{\log_2(x/q^2)}{q^2} &= s_2\left(\sqrt{\frac{x}{2}}\right)\log_2 2 + \int_2^{\sqrt{\frac{x}{2}}} \frac{2s_2(t) dt}{t\log(x/t^2)}
\end{align*}
where $s_a(t):=\sum_{p\le t}p^{-a}$.  Writing $s_2(t) = P(2) - \epsilon_2(t)$, the right-hand side is
\begin{align*}
P(2)\log_2 2 -\epsilon_2\left(\sqrt{\frac{x}{2}}\right)\log_2 2 + P(2)\int_2^{\sqrt{\frac{x}{2}}} \frac{2~dt}{t\log(x/t^2)} - \int_2^{\sqrt{\frac{x}{2}}} \frac{2\epsilon_2(t)~dt}{t\log(x/t^2)}.
\end{align*}
Evaluating the first integral above directly and simplifying, we obtain 
\begin{equation}\label{integralformula}
\begin{aligned}
\sum_{q\le \sqrt{\frac{x}{2}}}\frac{\log_2(x/q^2)}{q^2} &= P(2)\log_2\frac{x}{4}+O_N\left(\log^{-N}x\right)-\int_2^{\sqrt{\frac{x}{2}}} \frac{2\epsilon_2(t)~dt}{t\log(x/t^2)}
\end{aligned}
\end{equation}
for any $N>0$.  Here we used Lemma \ref{primesquaretail} which gives the estimate
\begin{displaymath}
 \epsilon_2\left(\sqrt{\frac{x}{2}}\right)\ll 1/(\sqrt{x}\log x)\ll_N \log^{-N}x.
\end{displaymath}
Now, by \eqref{logexpand2},
\begin{displaymath}
P(2)\log_2\frac{x}{4} = P(2)\log_2 x - P(2)\sum_{j\ge 1}\frac{(\log 4)^j}{j\log^j x}.
\end{displaymath}
It remains to bound the integral
\begin{displaymath}
I:=\int_2^{\sqrt{\frac{x}{2}}} \frac{2\epsilon_2(t)~dt}{t\log(x/t^2)}
\end{displaymath}
appearing in \eqref{integralformula}.  We use the expansion
\begin{equation}\label{logexpand3}
\frac{1}{\log(x/t^2)} = \sum_{1\le m<N}\frac{2^{m-1}\log^{m-1}t}{\log^m x}+O_N\left(\frac{\log^{N} t}{\log^{N}x}\right)
\end{equation}
which follows from \eqref{logexpand}.  Recalling the notation of Lemma \ref{djk}, we have
\begin{equation}\label{dm2}
\int_{2}^{\sqrt{\frac{x}{2}}}\frac{\epsilon_2(t)\log^{m-1}t}{t}dt = d_{m,2} + O_N\left(\frac{1}{\log^{N} x}\right)
\end{equation}
uniformly for $1\le m<N$ by Lemma \ref{primesquaretail}.  Combining \eqref{logexpand3} and \eqref{dm2}, we find that for any $N > 0$ we have
\begin{equation}
I = \sum_{1\le m<N} \frac{2^m d_{m,2}}{\log^m x} + O_N\left(\frac{1}{\log^{N}x}\right).
\end{equation}
Referring to Section \ref{notes} and combining all terms, we find that the $1/\log x$ coefficient of $\mathcal{R}_3(x)$ is given by 
\begin{displaymath}
\alpha_1(\log_2 x+\beta)-P(2)\log 2-d_{1,2}.
\end{displaymath}
Finally, by Lemma \ref{djk}, we have $P(2)\log 2+d_{1,2} = \alpha_{1,2}$.  A similar argument applies to the general term with denominator $\log^j x$.

In fact, we have the following proposition, which can be proved in the same way by applying partial summation with the function $f_j(t)=\log^j t$.

\begin{prop}\label{pq2}
For each integer $N>0$, we have
\begin{displaymath}
\sum_{pq^2\le x}\frac{1}{pq^2} = P(2)(\log_2 x+\beta) - \sum_{1\le j<N}\frac{2^j\alpha_{j,2}}{j\log^j x}+O_N\left(\frac{1}{\log^{N}x}\right).
\end{displaymath}
\end{prop}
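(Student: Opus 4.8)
The plan is to follow the computation sketched between \eqref{eq:pq2x} and Lemma \ref{djk} above, but to carry all errors uniformly in $N$ rather than stopping at the $1/\log x$ term. Exactly as in \eqref{eq:pq2x}, Mertens' second theorem applied to the inner sum over $p$ gives
\begin{displaymath}
\sum_{pq^2\le x}\frac1{pq^2}=\sum_{q\le\sqrt{x/2}}\frac{1}{q^2}\Big(\log_2\tfrac{x}{q^2}+\beta+E\big(\tfrac{x}{q^2}\big)\Big).
\end{displaymath}
By \eqref{eq:rootE} and \eqref{eq:rootx2E}, the contributions of the $\beta/q^2$ and $E(x/q^2)/q^2$ terms are $P(2)\beta+O_A(\log^{-A}x)$ and $O_A(\log^{-A}x)$ respectively, for every $A>0$. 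Taking $A=N$, everything then reduces to the main sum $\sum_{q\le\sqrt{x/2}}\log_2(x/q^2)/q^2$.

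For that main sum I would apply the expansion \eqref{logexpand2} with $t=q$, $a=2$, valid and uniform for $q\in[2,\sqrt{x/2}]$, namely $\log_2(x/q^2)=\log_2 x-\sum_{1\le m<N}(2\log q)^m/(m\log^m x)+O_N(\log^N q/\log^N x)$, and sum against $1/q^2$ term by term. The $O_N$-term contributes $O_N(\log^{-N}x)$ since $\sum_q\log^N q/q^2$ converges; the leading piece contributes $(\log_2 x)\sum_{q\le\sqrt{x/2}}q^{-2}=P(2)\log_2 x+O_N(\log^{-N}x)$, because $\sum_{q\le\sqrt{x/2}}q^{-2}=P(2)-\epsilon_2(\sqrt{x/2})$ and $\epsilon_2(\sqrt{x/2})\ll(\sqrt x\log x)^{-1}$ by Lemma \ref{primesquaretail}; and the $m$-th middle term contributes $-\frac{2^m}{m\log^m x}\sum_{q\le\sqrt{x/2}}\frac{\log^m q}{q^2}$.

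The only point needing a short argument is replacing the truncated sum $\sum_{q\le\sqrt{x/2}}\log^m q/q^2$ by the full constant $\alpha_{m,2}=\sum_p(\log p)^m/p^2$ of \eqref{alphajk}. Here I would use partial summation with the function $f_m(t)=\log^m t$ against the tail $\epsilon_2(t)=\sum_{p>t}p^{-2}$, together with $\epsilon_2(t)\ll(t\log t)^{-1}$ from Lemma \ref{primesquaretail}, to obtain $\sum_{q>y}\log^m q/q^2\ll\log^{m-1}y/y$; taking $y=\sqrt{x/2}$ makes this tail $O_N(\log^{-N}x)$ uniformly for $1\le m<N$. Collecting everything yields
\begin{displaymath}
\sum_{pq^2\le x}\frac1{pq^2}=P(2)\log_2 x+P(2)\beta-\sum_{1\le j<N}\frac{2^j\alpha_{j,2}}{j\log^j x}+O_N(\log^{-N}x),
\end{displaymath}
which is the claim. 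Alternatively, one may instead keep the integral $I$ of the preceding discussion with its expansion $I=\sum_{m<N}2^m d_{m,2}/\log^m x+O_N(\log^{-N}x)$, combine it with $P(2)\log_2(x/4)=P(2)\log_2 x-\sum_{m<N}2^m P(2)\log^m 2/(m\log^m x)+O_N(\log^{-N}x)$, and then apply the identity $\alpha_{j,2}=P(2)\log^j 2+jd_{j,2}$ of Lemma \ref{djk}; this is the same computation reorganized. I expect no genuine obstacle in either route: the only care required is verifying that the three truncation errors — the tail of $\sum q^{-2}$, the tail of $\sum\log^m q/q^2$, and the $O_N$ remainder in \eqref{logexpand2} — are each of size $O_N(\log^{-N}x)$ uniformly in the relevant ranges, which the bounds above supply.
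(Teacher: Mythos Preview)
Your proposal is correct, and your second route (expand $I=\sum_{m<N}2^m d_{m,2}/\log^m x+O_N(\log^{-N}x)$, combine with the expansion of $P(2)\log_2(x/4)$, and invoke $\alpha_{j,2}=P(2)\log^j 2+jd_{j,2}$ from Lemma \ref{djk}) is exactly the paper's argument. Your first route---expanding $\log_2(x/q^2)$ via \eqref{logexpand2} and summing term by term to hit $\alpha_{m,2}$ directly---is a harmless reorganization that bypasses the integral $I$ and Lemma \ref{djk}; the tail bounds you cite are sufficient, so either path works.
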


It remains to verify the estimate \eqref{rnestimate}.  This follows directly from \eqref{vnestimate} (dividing by 6) along with Proposition \ref{alphajkestimate} with $k=2$, which gives $2^{j-1}\alpha_{j,2}/j = (1+o(1))2^{j-1}(j-1)!/j = (1+o(1))\alpha_j = o(\alpha_j\log j)$.  This completes the proof of Theorem \ref{R3thm1}.

\section{Numbers with four prime factors}\label{R4sec}

In this section, we prove a fine-scale asymptotic for $\mathcal{R}_4(x)$.

\begin{thm}\label{R4thm}
For any integer $N>0$, we have

\begin{displaymath}
\begin{aligned}
{\mathcal R}_4(x) &= \frac{1}{24}(\log_2 x+\beta)^4+\frac{P(2)-\zeta(2)}{4}(\log_2 x+\beta)^2
+\frac{P(3)+\zeta(3)}{3}(\log_2 x+\beta)\\
&\quad\qquad +\frac{P(4)}{4}+\frac{\zeta(4)}{16} + \frac{P(2)^2}{8}-\frac{P(2)\zeta(2)}{4}\\
   &\quad +\sum_{1\le j<N}\frac{\alpha_j((\log_2 x+\beta)^2+P(2)-\zeta(2))/2 - r_j(\log_2 x+\beta)-t_j}{\log^j x} \ + \ O_N\left(\frac{(\log_2 x)^2}{\log^{N} x}\right)
\end{aligned}
\end{displaymath}
for constants $t_j$.  Here $r_j$ is defined as in Theorem \ref{R3thm1}. Additionally, we have $t_1 = \alpha_{1,3}$ and $t_2 = 3\alpha_{2,3}/2-(\alpha_{2,2}+\alpha_1\alpha_{1,2}+\alpha_1^2/2)$.
\end{thm}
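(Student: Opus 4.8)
The plan is to mirror the proof of Theorem \ref{R3thm1}, with $\mathcal{S}_4(x)$ playing the role of $\mathcal{S}_3(x)$ and the fine-scale asymptotic \eqref{s2} for $\mathcal{S}_2(x)$ replacing Mertens' second theorem. First I would invoke the ($k=4$ case of the) decomposition of \cite[Proposition 3.2]{HME},
\begin{displaymath}
\mathcal{R}_4(x) = \frac{1}{24}\Bigl(\mathcal{S}_4(x) + 6\sum_{pqr^2\le x}\frac{1}{pqr^2} + 3\sum_{p^2q^2\le x}\frac{1}{p^2q^2} + 8\sum_{pq^3\le x}\frac{1}{pq^3} + 6\sum_{p^4\le x}\frac{1}{p^4}\Bigr),
\end{displaymath}
the five sums indexed by the partitions $(1^4),(2,1,1),(2,2),(3,1),(4)$ of $4$, with weights equal to the numbers of permutations of $S_4$ of the corresponding cycle types (summing to $4!$). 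The two ``small'' sums are immediate from Lemma \ref{primesquaretail}: splitting the inner prime-power sums at $x^{1/4}$ gives $\sum_{p^4\le x}p^{-4}=P(4)+O(x^{-3/4})$ and $\sum_{p^2q^2\le x}(pq)^{-2}=P(2)^2+O(x^{-1/4})$, which under the weights $\tfrac14$ and $\tfrac18$ contribute the $P(4)/4$ and $P(2)^2/8$ of the main term.

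Next I would handle $\sum_{pq^3\le x}(pq^3)^{-1}$ exactly as in Proposition \ref{pq2}, replacing $q^2$ by $q^3$ throughout (so $\mathcal{R}_1$ is evaluated at $x/q^3$, the expansion \eqref{logexpand3} is used with $a=3$, and Lemma \ref{djk} with $k=3$), obtaining
\begin{displaymath}
\sum_{pq^3\le x}\frac{1}{pq^3}=P(3)(\log_2 x+\beta)-\sum_{1\le j<N}\frac{3^j\alpha_{j,3}}{j\log^j x}+O_N(\log^{-N}x);
\end{displaymath}
weighted by $\tfrac13$ this supplies $P(3)/3$ in the main term and the summand $-3^{j-1}\alpha_{j,3}/j$ of $-t_j$ (consistent with the Remark after Theorem \ref{R3thm1} and with $t_1=\alpha_{1,3}$). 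Then I would extend Proposition \ref{s2s3} to $k=4$ by the Hankel-contour/multinomial argument of Section \ref{notes}: the $M=0$ part collapses to $\frac{d^4}{dz^4}\bigl[(\log x)^{z}e^{(\beta-\gamma)z}/\Gamma(z+1)\bigr]\big|_{z=0}$, which by \eqref{gammaexpansion} equals $(\log_2 x+\beta)^4-6\zeta(2)(\log_2 x+\beta)^2+8\zeta(3)(\log_2 x+\beta)+3\zeta(2)^2-6\zeta(4)$, and since $3\zeta(2)^2-6\zeta(4)=\tfrac32\zeta(4)$ dividing by $24$ gives the $(\log_2 x+\beta)^4/24$ and $\zeta(4)/16$ of the main term (and combines with the $pqr^2$ and $pq^3$ pieces to produce its $(\log_2 x+\beta)^2$- and $(\log_2 x+\beta)$-terms). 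For $M\ge1$, the $\log^{-M}x$-coefficient of $\mathcal{S}_4(x)$ is a quadratic in $\log_2 x$: its $(\log_2 x)^2$-coefficient is $12\alpha_M$ by the proposition giving the leading $(\log_2 x)^{k-2}$-term, its $(\log_2 x)^1$-coefficient is $24\bigl(\alpha_M(\beta-h_1(M-1))-\tfrac12\sum_{1\le i\le M-1}i\binom{M-1}{i}\alpha_i\alpha_{M-i}\bigr)$ by Proposition \ref{k-3} with $k=4$, and its $(\log_2 x)^0$-coefficient is assembled from the remaining partitions of $4$ (the $\zeta(2)$ coming from $(1/\Gamma)'''(1-M)$); the cases $M=1,2$ are already \eqref{s41} and \eqref{s42}. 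As in the $\mathcal{R}_3$ case, carrying this expansion one term further (to $j=N$) reduces its error from $O_N((\log_2 x)^3/\log^N x)$ to $O_N((\log_2 x)^2/\log^N x)$.

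The one genuinely new computation is the $pqr^2$ sum. Writing $\sum_{pqr^2\le x}(pqr^2)^{-1}=\sum_{r\le(x/4)^{1/2}}r^{-2}\,\mathcal{S}_2(x/r^2)$ and substituting \eqref{s2}, I would expand $\log_2(x/r^2)$ and $\log^{-i}(x/r^2)$ via \eqref{logexpand2} and \eqref{logexpand} with $a=2,\,t=r$, apply the multinomial theorem to $(\log_2(x/r^2)+\beta)^2$, and evaluate the resulting sums over $r$. The convergent weight $r^{-2}$ turns each $\sum_{r}r^{-2}(\log r)^{\ell}$ into $\alpha_{\ell,2}+O_N(\log^{-N}x)$ by Lemma \ref{primesquaretail}, and all the error terms (from \eqref{logexpand}, \eqref{logexpand2}, \eqref{s2}, and the tail $r>x^{1/4}$) combine, after splitting at $x^{1/4}$, to $O_N(\log_2 x/\log^N x)$. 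This yields $\sum_{pqr^2\le x}(pqr^2)^{-1}=P(2)((\log_2 x+\beta)^2-\zeta(2))+\sum_{1\le j<N}(a_j\log_2 x+b_j)/\log^j x+O_N(\log_2 x/\log^N x)$ with $a_j=-2^{j+1}\alpha_{j,2}/j$ and $b_j$ an explicit combination of $P(2),\alpha_\bullet,\alpha_{\bullet,2}$ whose ``$\alpha_j P(2)$''-part is $2\alpha_j P(2)$; under the weight $\tfrac14$ these become $-2^{j-1}\alpha_{j,2}\log_2 x/j$ and, in part, $\alpha_j P(2)/2$, which — together with the $\mathcal{S}_4$ contributions above (whose degree-$2$ $\log_2 x$-part furnishes $\alpha_j(\log_2 x+\beta)^2/2$ and whose $(1/\Gamma)'''$-$\zeta(2)$-term furnishes $-\alpha_j\zeta(2)/2$) — reproduce exactly $\alpha_j((\log_2 x+\beta)^2+P(2)-\zeta(2))/2-r_j(\log_2 x+\beta)$ with the same $r_j$ (and estimate \eqref{rnestimate}) as in Theorem \ref{R3thm1}. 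Adding all five pieces with weights $\tfrac1{24},\tfrac14,\tfrac18,\tfrac13,\tfrac14$ then gives the displayed main term and leaves, for each $j$, a finite unsimplified combination $t_j$ of $\alpha_{\bullet,\bullet},\alpha_\bullet,P(\bullet),\zeta(\bullet)$, with $t_1=\alpha_{1,3}$ and $t_2=3\alpha_{2,3}/2-(\alpha_{2,2}+\alpha_1\alpha_{1,2}+\alpha_1^2/2)$ read off from $j=1,2$. The main obstacle is the bookkeeping: one must track, for each power $\log^{-j}x$, the three $\log_2 x$-layers emerging from the extended $\mathcal{S}_4$-expansion simultaneously with the $\alpha_{\bullet,2}$- and $\alpha_\bullet$-terms produced by expanding and summing $\mathcal{S}_2(x/r^2)$ against $r^{-2}$, and verify that the weighted total telescopes into the stated form — in particular that the $(\log_2 x+\beta)^2$- and $(\log_2 x+\beta)$-coefficients of $\log^{-j}x$ come out to be precisely $\alpha_j/2$ and $-r_j$; a secondary point is ensuring the $\mathcal{S}_4$ piece is carried to $j=N$ so the final error is $O_N((\log_2 x)^2/\log^N x)$ rather than $O_N((\log_2 x)^3/\log^N x)$.
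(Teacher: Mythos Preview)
Your proposal is correct and follows essentially the same route as the paper: the same five-term decomposition from \cite[Proposition 3.2]{HME}, the same treatment of the $p^4$, $p^2q^2$, and $pq^3$ sums, the same Hankel-contour/multinomial expansion for $\mathcal{S}_4$ (invoking the $(\log_2 x)^{k-2}$- and $(\log_2 x)^{k-3}$-propositions of Section~\ref{notes}), and the same reduction of the $pqr^2$ sum to $\sum_r r^{-2}\mathcal{S}_2(x/r^2)$ followed by expansion of the logarithms. Your direct summation of $\sum_r r^{-2}(\log r)^\ell=\alpha_{\ell,2}+O_N(\log^{-N}x)$ corresponds to what the paper calls the ``alternative approach'' leading to \eqref{r4precisionterms}, whereas its primary presentation first applies partial summation with $s_2(t)=P(2)-\epsilon_2(t)$ as in Section~\ref{R3sec}; the two are equivalent via Lemma~\ref{djk}, and your version is arguably cleaner. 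You also supply the explicit $M=0$ computation for $\mathcal{S}_4$ (via $\tfrac{d^4}{dz^4}[e^{(\log_2 x+\beta)z}/\Gamma(z+1)]|_{z=0}$ and \eqref{gammaexpansion}), which the paper leaves implicit by citing \cite{tenenbaum2016generalized}.
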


\begin{rmk}
In principle, the proof of Theorem \ref{R4thm} also gives a method for determining formulas for $t_j$ for any $j$.
\end{rmk}

\begin{rmk}
Note that $((\log_2 x+\beta)^2+P(2)-\zeta(2))/2 = R_{2,0}(\log_2 x)$.  In the notation of \cite{HME} we have $R_{k,0}(\log_2 x)=R_k(\log_2 x+\beta)$.
\end{rmk}

For the proof of Theorem \ref{R4thm}, as with \cite[Proposition 3.2]{HME}, we must handle five sums, i.e.
\begin{displaymath}
\mathcal{R}_4(x) = \frac{1}{4!}\left(\sum_{pqrs\le x}\frac{1}{pqrs} + 6\sum_{p^2qr\le x}\frac{1}{p^2qr}+3\sum_{p^2 q^2\le x}\frac{1}{p^2q^2}+8\sum_{p^3q\le x}\frac{1}{p^3 q} + 6\sum_{p^4\le x}\frac{1}{p^4}\right).
\end{displaymath}

The sum over $pqrs\le x$ is $\mathcal{S}_4(x)$, so we may use \eqref{s41} and \eqref{s42} in Section \ref{notes}.  The sum over $p^3q\le x$ is handled by the following generalization of Proposition \ref{pq2}, whose proof is analogous.  

\begin{prop}\label{pqa}
For each integer $a\ge 2$ and each integer $N>0$, we have
\begin{displaymath}
\sum_{pq^a\le x}\frac{1}{pq^a} = P(a)(\log_2 x+\beta) - \sum_{1\le j<N}\frac{a^j\alpha_{j,a}}{j\log^j x}+O_{a,N}\left(\frac{1}{\log^{N}x}\right).
\end{displaymath}
\end{prop}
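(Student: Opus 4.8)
The plan is to mirror the proof of Proposition~\ref{pq2} (the case $a=2$) carried out in Section~\ref{R3sec}, the only essential change being that the faster-converging series $\sum_q q^{-a}$ and $\sum_q (\log^m q)q^{-a}$ replace $\sum_q q^{-2}$ and $\sum_q (\log^m q)q^{-2}$. First I would write, using Mertens' second theorem together with the notation $E(t)=\sum_{p\le t}p^{-1}-\log_2 t-\beta$,
\[
\sum_{pq^a\le x}\frac{1}{pq^a} \ = \ \sum_{q\le (x/2)^{1/a}}\frac{1}{q^a}\sum_{p\le x/q^a}\frac{1}{p} \ = \ \sum_{q\le (x/2)^{1/a}}\frac{1}{q^a}\Big(\log_2\tfrac{x}{q^a}+\beta+E\big(\tfrac{x}{q^a}\big)\Big).
\]
The $\beta$-contribution is $\beta\big(P(a)-\epsilon_a((x/2)^{1/a})\big)=\beta P(a)+O_{a,N}(\log^{-N}x)$ by Lemma~\ref{primesquaretail}. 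For the $E$-contribution I would split the range at $x^{1/(2a)}$: the trivial bound $E\ll1$ with Lemma~\ref{primesquaretail} handles $q>x^{1/(2a)}$, contributing $\ll x^{-(a-1)/(2a)}(\log x)^{-1}\ll_{a,N}(\log x)^{-N}$, while for $q\le x^{1/(2a)}$ one has $x/q^a\ge\sqrt x$, so the prime number theorem bound $E(y)\ll_A(\log y)^{-A}$ from \eqref{mertens2error} gives a contribution $\ll_N(\log x)^{-N}\sum_q q^{-a}\ll_{a,N}(\log x)^{-N}$.

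It then remains to treat the main term $\sum_{q\le(x/2)^{1/a}}q^{-a}\log_2(x/q^a)$. Expanding $\log_2(x/q^a)$ by \eqref{logexpand2}, which is valid uniformly for $q\le(x/2)^{1/a}$, gives
\[
\sum_{q\le(x/2)^{1/a}}\frac{\log_2(x/q^a)}{q^a} \ = \ \log_2 x\sum_{q\le(x/2)^{1/a}}\frac{1}{q^a} \ - \sum_{1\le m<N}\frac{a^m}{m\log^m x}\sum_{q\le(x/2)^{1/a}}\frac{\log^m q}{q^a} \ + \ \textnormal{(error)}.
\]
By Lemma~\ref{primesquaretail}, $\sum_{q\le(x/2)^{1/a}}q^{-a}=P(a)+O_{a,N}(\log^{-N}x)$; and for each $m<N$ the sum $\sum_{q\le(x/2)^{1/a}}(\log^m q)q^{-a}$ is a partial sum of the convergent series $\alpha_{m,a}=\sum_p(\log^m p)p^{-a}$ of \eqref{alphajk}, whose tail $\sum_{q>(x/2)^{1/a}}(\log^m q)q^{-a}$ is super-polynomially small in $\log x$, so this sum is $\alpha_{m,a}+O_{a,N}(\log^{-N}x)$. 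The truncation error from \eqref{logexpand2} contributes $\ll_{a,N}(\log x)^{-N}\sum_q(\log^N q)q^{-a}=\alpha_{N,a}(\log x)^{-N}\ll_{a,N}(\log x)^{-N}$. Collecting these, combining with the $\beta$-contribution, and reading off the coefficient of $1/\log^m x$ as $-a^m\alpha_{m,a}/m$ yields the claimed formula. (Alternatively, following Section~\ref{R3sec} closely, one evaluates $\sum_{q\le(x/2)^{1/a}}(\log^m q)q^{-a}$ by partial summation with $f_m(t)=\log^m t$ and $s_a(t)=P(a)-\epsilon_a(t)$, producing $P(a)\log^m 2+m\,d_{m,a}$ plus a negligible error, which equals $\alpha_{m,a}$ by Lemma~\ref{djk}; equivalently still, one does partial summation on the whole main term with $s_a$, producing $P(a)\log_2(x/2^a)$ and the integral $a\int_2^{(x/2)^{1/a}}\epsilon_a(t)\,(t\log(x/t^a))^{-1}\,dt$, and expands these by \eqref{logexpand2} and \eqref{logexpand}.)

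The proof involves no new idea beyond Proposition~\ref{pq2}; the main point requiring care is checking that every error is genuinely $O_{a,N}(\log^{-N}x)$ uniformly. The one subtlety is that the expansions \eqref{logexpand} and \eqref{logexpand2} need not be individually small when $q$ is close to the top of the summation range $q\approx(x/2)^{1/a}$; this is harmless since those errors carry the weight $q^{-a}$ (equivalently, one may invoke the geometric-series form as in \eqref{logexpand3}), so after summation they are controlled by the convergent series $\sum_p(\log^N p)p^{-a}=\alpha_{N,a}$. Moreover, for $a\ge3$ the stronger decay $\epsilon_a(t)\ll t^{1-a}(\log t)^{-1}$ from Lemma~\ref{primesquaretail} makes even the cruder expansion \eqref{logexpand} suffice directly, so the new cases $a\ge3$ are if anything easier than the already-proved case $a=2$.
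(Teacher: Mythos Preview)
Your proposal is correct and follows the paper's approach. The paper itself merely remarks that the proof of Proposition~\ref{pqa} ``is analogous'' to that of Proposition~\ref{pq2}, and your argument is exactly that analogy: the same Mertens-plus-error decomposition \eqref{eq:pq2x}, the same splitting for the $E$-term, and the same treatment of the main term. Your primary write-up expands $\log_2(x/q^a)$ directly via \eqref{logexpand2} and then sums termwise, whereas the paper (for $a=2$) first applies partial summation to reach an integral involving $1/\log(x/t^2)$ and then expands via \eqref{logexpand3}; but you explicitly record the paper's route as your parenthetical alternative, and the two computations are equivalent rearrangements of the same estimate (both ultimately identify the $1/\log^j x$ coefficient with $-a^j\alpha_{j,a}/j$ via Lemma~\ref{djk}). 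Your remark that the pointwise error in \eqref{logexpand2} is not small when $q$ is near $(x/2)^{1/a}$, but that the weight $q^{-a}$ absorbs this after summation, is the right observation; if one wants to make this fully rigorous without appealing to the uniformity claim in \eqref{logexpand2}, it suffices to split the $q$-sum at $x^{1/(2a)}$ exactly as you already do for the $E$-term.
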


The sum over $p^4\le x$ is $P(4)$ up to negligible error, and the sum over $p^2q^2\le x$ can be estimated as $P(2)^2$ up to negligible error by splitting the sum at $x^{1/4}$.  It remains then to deal with the sum

\begin{equation}\label{p2qr1}
\sum_{p^2qr\le x}\frac{1}{p^2qr} = \sum_{p\le \sqrt{\frac{x}{4}}}\frac{1}{p^2}
\,\mathcal{S}_2\left(\frac{x}{p^2}\right).
\end{equation}

We may apply the estimate for $\mathcal{S}_2(x)$ and write the sum in \eqref{p2qr1} as

\begin{equation}\label{p2qr2}
\sum_{p\le \sqrt{\frac{x}{4}}}\left(\frac{(\log_2 \frac{x}{p^2}+\beta)^2}{p^2}-\frac{\zeta(2)}{p^2} + \sum_{1\le j<N}\frac{2\alpha_j}{p^2\log^j\frac{x}{p^2}}+O_N\left(\frac{1}{p^2\log^{N}\frac{x}{p^2}}\right)\right).
\end{equation}

We bound the error term as $\ll_N \log^{-{N}}x$ by splitting the sum at $x^{1/4}$ and applying Lemma \ref{primesquaretail}. Next, note that by partial summation we have

\begin{displaymath}
\sum_{p\le \sqrt{\frac{x}{4}}}\frac{1}{p^2\log^j\frac{x}{p^2}} = \frac{P(2)}{\log^j\frac{x}{4}} + O\left(\frac{1}{\sqrt{x}\log x}\right) + j\int_2^{\sqrt{\frac{x}{4}}} \frac{2\epsilon_2(t)}{t\log^{j+1}\frac{x}{t^2}}dt.
\end{displaymath}

At this point, we can expand the logarithms recalling \eqref{logexpand3}, namely
\begin{displaymath}
\frac{1}{\log\frac{x}{t^2}} = \sum_{1\le m<N}\frac{2^{m-1}\log^{m-1}t}{\log^m x}+O_N\left(\frac{\log^{N} t}{\log^{N}x}\right),
\end{displaymath}
then raise the resulting expressions to the power $j$ using the multinomial theorem.  The remaining sums are similar to the ones above.  A slight modification of the argument handling \eqref{eq:pq2x} allows us to replace $\sqrt{x/2}$ with $\sqrt{x/4}$.  Specifically, by partial summation we have

\begin{displaymath}
\begin{aligned}
\sum_{p\le \sqrt{\frac{x}{4}}}\frac{(\log_2 \frac{x}{p^2})^2}{p^2} &= s_2\left(\sqrt{\frac{x}{4}}\right)(\log_2 4)^2 + \int_2^{\sqrt{\frac{x}{4}}}\frac{4s_2(t)\log_2\frac{x}{t^2}}{t\log\frac{x}{t^2}}dt\\
&= P(2)\left(\log_2\frac{x}{4}\right)^2 + O\left(\frac{1}{\sqrt{x}\log x}\right)-\int_2^{\sqrt{\frac{x}{4}}}\frac{4\epsilon_2(t)\log_2\frac{x}{t^2}}{t\log\frac{x}{t^2}}dt.
\end{aligned}
\end{displaymath}

We can now use the expansion
\begin{equation}\label{logexpand4}
\log_2\frac{x}{t^2} = \log_2 x - \sum_{j\ge 1}\frac{(2\log t)^j}{j\log^j x},
\end{equation}
which is a special case of \eqref{logexpand2} above,
and \eqref{logexpand3}.  Combining all estimates and taking Lemma \ref{djk} into account, we complete the proof of Theorem \ref{R4thm}.

Note that an alternative approach is to expand the logarithms to obtain an estimate for the sum in \eqref{p2qr2} up to error $\ll_N (\log_2 x)/\log^{N} x$ as
\begin{displaymath}
\sum_p p^{-2}\bigg(\big(\log_2 x - \sum_{1\le j<N} \frac{2^j\log^j p}{j\log^j x}+\beta\big)^2-\zeta(2)+\sum_{1\le j < N} 2\alpha_j \big(\sum_{1\le m < N}\frac{2^{m-1}\log^{m-1} p}{\log^m x}\big)^j\bigg).
\end{displaymath}

Rearranging terms, multiplying by $6/4!$, and adding the contribution from Proposition \ref{pqa} with $a=3$ (multiplying by $8/4!$), we obtain the following formula for the fine-scale terms in $\mathcal{R}_4(x)$ (aside from those in $\mathcal{S}_4(x)$ which are derived in Section \ref{notes}):
\begin{equation}\label{r4precisionterms}
 \begin{aligned}
 &-\frac{1}{2}(\log_2 x+\beta)\sum_{1\le j<N} \frac{2^{j}\alpha_{j,2}}{j\log^j x}-\frac{1}{3}\sum_{1\le j<N} \frac{3^j \alpha_{j,3}}{j\log^j x}+\frac{1}{4}\sum_p p^{-2}\left(\sum_{1\le j<N} \frac{2^{j}\log^j p}{j\log^j x}\right)^2\\
 &\qquad + \frac{1}{4}\sum_p  p^{-2}\sum_{1\le j<N} 2\alpha_j\left(\sum_{1\le m<N}\frac{2^{m-1}\log^{m-1} p}{\log^m x}\right)^j + O_N\left(\frac{\log_2 x}{\log^{N} x}\right).
 \end{aligned}
 \end{equation}

Combining Proposition \ref{k-3} with formula \eqref{r4precisionterms} gives Theorem \ref{R4thm}.

\section{Computations}\label{compute}

We produced the following numerical approximations of the ratio $\alpha_j/(2^jj!/2j^2)$ by implementing the formula \cite[(33)]{cricsan2021counting} of Cri{\c{s}}an and Erban using the computer program Pari/GP.  The table below illustrates the rate of convergence for the asymptotic estimate for the $\alpha_j$ given by Theorem \ref{mainthm}.

\begin{center}
    \begin{tabular}{c|c|c|c}
   $j$ & $\alpha_j /
  \left( \frac{j!\,2^{j}}{2j^2}\right)$
     & $j$ & $\alpha_j /
  \left( \frac{j!\,2^{j}}{2j^2}\right)$\\
     \hline
1 &  $1.332582$ & 14  & $1.012312$\\
2  & $1.277553$ & 15  & $1.009180$\\
3  & $1.281728$ & 16  & $1.006852$\\
4 &  $1.236091$ & 17  & $1.005118$\\
5  & $1.181314$ & 18  & $1.003825$\\
6  & $1.135188$ & 19  & $1.002861$\\
7  & $1.099931$ & 20  & $1.002140$\\
8  & $1.073764$ & 21  & $1.001602$\\
9  & $1.054504$ & 22  & $1.001199$\\
10  & $1.040343$ & 23  & $1.000898$\\
11  & $1.029915$ & 24  & $1.000673$\\
12  & $1.022220$ & 25 & $1.000504$\\
13  & $1.016529$ & 26 & $1.000377$
    \end{tabular}
\end{center}

\appendix

\section{Polylogarithm estimates} \label{polylogsec}

Let $M$ be fixed.  We want to show that for $x \in \lbrack 2, M\rbrack$,
\begin{displaymath}
\text{Li}_{1-j} \left( \frac{1}{x} \right) \ll_M \frac{(j-1)!}{(\log x)^j}.
\end{displaymath}

(We used this estimate with $M=e^{16}$ in the proof of Theorem \ref{mainthm}.)  We demonstrate that in fact, for fixed $x>1$ we have $\text{Li}_{-k}(1/x)\sim I(k,x)$, where
\begin{displaymath}
I(k,x) := \int_1^\infty \frac{t^k}{x^t}~dt
\end{displaymath}
and therefore $\text{Li}_{-k}(1/x)\sim k!/\log^{k+1}x$ as $k\to \infty$.  

We begin by mentioning a recurrence relation for this integral, as well as an explicit formula.  We then compare the sum $\text{Li}_{-k}\left(1/x\right)$ with the integral $I(k,x)$, verifying that they are asymptotic as $k\to \infty$.  The rate of convergence can be made explicit by using an explicit form of Stirling's formula.

\begin{prop}\label{polylogintegralformula}
Let $x>1$.  We have the recurrence relation
\begin{displaymath}
I(k,x) =\frac{1}{x\log x} + \frac{k}{\log x}I(k-1,x)~~(k\ge 1),~~I(0,x)=\frac{1}{x\log x}.
\end{displaymath}
It follows that for all $k\ge 0$, we have
\begin{displaymath}
I(k,x) = \frac{1}{x}\sum_{j=1}^{k+1}\frac{k!}{(k+1-j)!}\frac{1}{\log^j x}.
\end{displaymath}
Moreover,
\begin{equation} \label{I(k,x)bounds}
\frac{k!}{\log^{k+1}x}\left(1-\frac{\log^{k+1} x}{(k+1)!}\right) < I(k,x) < \frac{k!}{\log^{k+1}x}\left(1-\frac{\log^{k+1} x}{x(k+1)!}\right).
\end{equation}
Therefore, for fixed $x>1$ we have $I(k,x)\sim k!/\log^{k+1} x~~(k\to\infty)$.
\end{prop}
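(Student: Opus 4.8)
The plan is to treat the three assertions in sequence. Writing $x^{-t}=e^{-t\log x}$, I would obtain the recurrence by integration by parts in $I(k,x)=\int_1^\infty t^k e^{-t\log x}\,dt$, taking $u=t^k$ and $dv=e^{-t\log x}\,dt$: the boundary term at $t=\infty$ vanishes since $\log x>0$, the boundary term at $t=1$ contributes $\frac{1}{x\log x}$ (using $e^{-\log x}=1/x$), and what remains is $\frac{k}{\log x}I(k-1,x)$. The base case $I(0,x)=\int_1^\infty e^{-t\log x}\,dt=\frac{1}{x\log x}$ is a one-line computation. With the recurrence in hand, I would prove the closed form $I(k,x)=\frac{1}{x}\sum_{j=1}^{k+1}\frac{k!}{(k+1-j)!}\frac{1}{\log^j x}$ by induction on $k$, the base case being $I(0,x)$ above; feeding the formula for $I(k-1,x)$ into the recurrence gives $\frac{1}{x\log x}+\frac{1}{x}\sum_{j=1}^{k}\frac{k!}{(k-j)!}\frac{1}{\log^{j+1}x}$, and after shifting the summation index so that the exponent of $1/\log x$ runs over $2,\dots,k+1$ and recognizing $\frac{1}{x\log x}$ as the $j=1$ term (where $\frac{k!}{(k+1-j)!}=1$), this is exactly the claimed sum.

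\textbf{Bounds and asymptotic.} The key observation is that the \emph{same} integrand, but over the full half-line, is a Gamma integral: $\int_0^\infty t^k e^{-t\log x}\,dt=k!/(\log x)^{k+1}$. Hence
\[
I(k,x)=\frac{k!}{(\log x)^{k+1}}-\int_0^1 t^k x^{-t}\,dt .
\]
On the open interval $(0,1)$ one has $1<x^t<x$ since $x>1$, so that $\frac{1}{x(k+1)}<\int_0^1 t^k x^{-t}\,dt<\frac{1}{k+1}$; writing $\frac{1}{k+1}=\frac{k!}{(k+1)!}$ (and similarly with the factor $1/x$), subtracting, and pulling out $\frac{k!}{(\log x)^{k+1}}$ produces precisely the two-sided bound \eqref{I(k,x)bounds}, with strict inequalities because $1<x^t<x$ is strict on $(0,1)$. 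Letting $k\to\infty$ with $x$ fixed, both correction factors $1-(\log x)^{k+1}/((k+1)!)$ and $1-(\log x)^{k+1}/(x(k+1)!)$ tend to $1$, which gives $I(k,x)\sim k!/(\log x)^{k+1}$.

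\textbf{Main difficulty.} There is no serious obstacle here: the only real idea is the one in the bound step, namely that extending the lower limit from $1$ to $0$ turns $I(k,x)$ into a Gamma value and reduces everything to estimating the tiny correction $\int_0^1 t^k x^{-t}\,dt$ by the trivial bounds $1<x^t<x$. The rest is bookkeeping --- one should just be a little careful with the strictness of the inequalities and with the index shift in the inductive proof of the closed form. As an aside, the closed form is not actually needed for the bounds; it is recorded because it is used elsewhere, but one could derive the bounds directly from the Gamma-integral identity above.
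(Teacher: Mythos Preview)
Your argument is correct. The recurrence and the inductive derivation of the closed form match the paper's approach (the paper phrases the recurrence via the change of variables $u=t\log x$ and the incomplete gamma function, but this is the same integration-by-parts identity).

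For the bounds, however, you take a genuinely different route. The paper uses the closed form: dividing $I(k,x)$ by $k!/\log^{k+1}x$ gives $\frac{1}{x}\sum_{j=0}^{k}\frac{(\log x)^j}{j!}$, which is recognized as $(e^{\log x}-Q_k(\log x))/x$ with $Q_k$ the Taylor remainder for the exponential, and the two-sided bound then comes from the standard estimates $\frac{(\log x)^{k+1}}{(k+1)!}<Q_k(\log x)<\frac{x(\log x)^{k+1}}{(k+1)!}$. Your approach instead writes $I(k,x)=\frac{k!}{\log^{k+1}x}-\int_0^1 t^k x^{-t}\,dt$ via the Gamma integral and bounds the correction by the pointwise inequalities $1/x<x^{-t}<1$ on $(0,1)$. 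Your method is a bit cleaner in that it bypasses the closed form entirely, as you note; the paper's version has the virtue of making the connection to the exponential Taylor series explicit. The two are really the same estimate in disguise, since $\int_0^1 t^k x^{-t}\,dt=\frac{Q_k(\log x)}{x\log^{k+1}x}\cdot k!$ after the same substitution $u=t\log x$.
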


\begin{proof}
Under the change of variables $u=t\log x$, the recurrence relation is that of the upper incomplete gamma function.  The explicit formula then follows by induction on $k$.  Finally, note that
\begin{displaymath}
I(k,x)\Big/\frac{k!}{\log^{k+1}x} = \frac{1}{x}\sum_{j=1}^{k+1}\frac{\log^{k+1-j}x}{(k+1-j)!} = \frac{1}{x}\sum_{j=0}^k\frac{\log^j x}{j!} = \frac{e^{\log x}-Q_k(\log x)}{x},
\end{displaymath}
where
\begin{displaymath}
Q_k(v) := \sum_{j=k+1}^\infty \frac{v^j}{j!}
\end{displaymath}
is the remainder in the Taylor series for the exponential function.  We have
\begin{displaymath}
\frac{\log^{k+1}x}{(k+1)!}<Q_k(\log x)<\frac{x\log^{k+1}x}{(k+1)!},
\end{displaymath}
where the lower bound is obtained by retaining only one term, and the upper bound is given by Taylor's formula for the remainder term.  (Specifically, for some $z\in (0,\log x)$, we have $Q_k(\log x)=e^z \log^{k+1} x/(k+1)!$.)

Therefore,
\begin{displaymath}
1-\frac{\log^{k+1}x}{(k+1)!}<I(k,x)\Big/\frac{k!}{\log^{k+1}x} < 1-\frac{\log^{k+1}x}{x(k+1)!}.
\end{displaymath}
This completes the proof of Proposition \ref{polylogintegralformula}.
\end{proof}

We now relate the sum and the integral for fixed $x>1$, establishing their asymptotic equality as $k\to \infty$.  We will also use the following explicit form of Stirling's formula. (See for instance \cite[Remark 1.15]{lucadekoninck}.)

\begin{lem}\label{stirling}
For all $k\ge 1$, we have
\begin{displaymath}
1\le \frac{k!}{\left(\frac{k}{e}\right)^k\sqrt{2\pi k}}\le e^{1/(12k)}.
\end{displaymath}
\end{lem}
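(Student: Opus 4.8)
The plan is to study the sequence $d_k := \log k! - (k+\tfrac12)\log k + k$, for which the asserted inequalities are exactly $\log\sqrt{2\pi}\le d_k\le \log\sqrt{2\pi}+\tfrac1{12k}$. First I would record the one-step difference
\[
d_k - d_{k+1} = \bigl(k+\tfrac12\bigr)\log\frac{k+1}{k} - 1,
\]
and then make the substitution $t = 1/(2k+1)$, under which $\frac{k+1}{k} = \frac{1+t}{1-t}$ and $k+\tfrac12 = \frac1{2t}$. Using $\frac1{2t}\log\frac{1+t}{1-t} = \sum_{m\ge0}\frac{t^{2m}}{2m+1}$, this becomes
\[
d_k - d_{k+1} = \sum_{m\ge1}\frac{t^{2m}}{2m+1},
\]
which is positive, so $(d_k)$ is strictly decreasing. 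Bounding the right-hand side above by the geometric series $\tfrac13\sum_{m\ge1}t^{2m} = \frac{t^2}{3(1-t^2)} = \frac1{12k(k+1)} = \tfrac1{12}\bigl(\tfrac1k-\tfrac1{k+1}\bigr)$ shows simultaneously that $d_k - \tfrac1{12k}$ is strictly increasing.

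Since $(d_k)$ decreases and $(d_k - \tfrac1{12k})$ increases, both sequences are bounded, and $d_k$ converges to a limit $L$; as $d_k\downarrow L$ while $d_k - \tfrac1{12k}\uparrow L$, this gives $L < d_k < L + \tfrac1{12k}$ for every $k\ge1$. Exponentiating yields $e^L \le k!\big/\bigl((k/e)^k\sqrt{k}\bigr) \le e^L e^{1/(12k)}$, which is precisely the claimed double inequality provided $e^L = \sqrt{2\pi}$.

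Identifying the constant is the only step requiring genuine input beyond the monotonicity bookkeeping above, and I expect it to be the main obstacle, since that bookkeeping never sees the actual value of $e^L$. For this I would invoke the Wallis product in the form $\lim_{n\to\infty}\frac{2^{2n}(n!)^2}{(2n)!\sqrt{n}} = \sqrt{\pi}$, and substitute the asymptotic $n!\sim e^L (n/e)^n\sqrt{n}$ just obtained (with $2n$ in place of $n$ for the factor $(2n)!$); the powers of $2$, of $n$, and of $e$ all cancel, leaving $e^L/\sqrt{2} = \sqrt{\pi}$, hence $e^L = \sqrt{2\pi}$. Any of the usual alternatives — evaluating $e^L$ via the Gaussian integral together with the duplication formula, or from the asymptotics of the central binomial coefficient — would serve equally well, the choice being cosmetic. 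This completes the proof, and is essentially the argument in the reference cited after the statement.
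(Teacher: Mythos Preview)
Your proof is correct and complete; it is the classical Robbins-style derivation of Stirling's formula with explicit error, and indeed matches the argument in the reference the paper cites. The paper itself does not supply a proof of this lemma at all---it simply quotes \cite[Remark 1.15]{lucadekoninck}---so your write-up in fact supplies more than the paper does.
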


\begin{prop}\label{Sumvsintegral}
For all $x>1$ and $k\ge 1$, we have
\begin{displaymath}
\left|\frac{{\rm Li}_{-k}\left(1/x\right)}{I(k,x)} - 1\right|<\frac{x}{\sqrt{2\pi k}}.
\end{displaymath}
In particular, for fixed $x>1$ we have ${\rm Li}_{-k}\left(1/x\right)\sim I(k,x)~~(k\to \infty)$.
\end{prop}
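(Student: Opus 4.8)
The plan is to view both quantities as the sum and the integral of a single function and compare them directly. Write $f(t) := t^k x^{-t} = t^k e^{-t\log x}$, so that ${\rm Li}_{-k}(1/x) = \sum_{n\ge1}f(n)$ and $I(k,x) = \int_1^\infty f(t)\,dt$. Since $f'(t) = t^{k-1}e^{-t\log x}(k - t\log x)$, the function $f$ is strictly increasing on $[0,t^\ast]$ and strictly decreasing on $[t^\ast,\infty)$, where $t^\ast = k/\log x$, and its global maximum is $M := f(t^\ast) = \bigl(k/(e\log x)\bigr)^k$.

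First I would establish the comparison $\bigl|{\rm Li}_{-k}(1/x) - I(k,x)\bigr| \le M$. This is a standard sum-versus-integral estimate for a unimodal function: on the range where $f$ increases one has $f(n)\le\int_n^{n+1}f$, on the range where $f$ decreases one has $f(n)\le\int_{n-1}^n f$ (with the reversed inequalities giving the matching lower bound), so the terms telescope against adjacent unit intervals and every unit interval is used exactly once except possibly the one straddling $t^\ast$. On that interval one controls the two consecutive terms $f(n),f(n+1)$ by noting that $\int_n^{n+1}f \ge \min\bigl(f(n),f(n+1)\bigr)$ (the integral dominates a convex combination of the endpoint values since $f$ exceeds $f(n)$ on $[n,t^\ast]$ and $f(n+1)$ on $[t^\ast,n+1]$), so only a single extra copy of $\max(f(n),f(n+1))\le M$ is incurred; the boundary term $f(1)$ is also $\le M$. (The degenerate case $t^\ast<1$, where $f$ is already decreasing on $[1,\infty)$, and the case where $t^\ast$ is an integer, are handled the same way, using $f(\lfloor t^\ast\rfloor)\le M$.)

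Next I would bound $I(k,x)$ from below. By the explicit formula of Proposition~\ref{polylogintegralformula}, $I(k,x) = \tfrac1x\sum_{j=1}^{k+1}\tfrac{k!}{(k+1-j)!\log^j x}$, and since every summand is positive, retaining only the term $j=k$ gives $I(k,x) > \tfrac{k!}{x\log^k x}$. Applying the lower bound $k! \ge (k/e)^k\sqrt{2\pi k}$ from Lemma~\ref{stirling} yields
\[
I(k,x) > \frac{(k/e)^k\sqrt{2\pi k}}{x\log^k x} = \frac{\sqrt{2\pi k}}{x}\,M.
\]
Combining this with the comparison above,
\[
\left|\frac{{\rm Li}_{-k}(1/x)}{I(k,x)} - 1\right| = \frac{\bigl|{\rm Li}_{-k}(1/x) - I(k,x)\bigr|}{I(k,x)} \le \frac{M}{I(k,x)} < \frac{x}{\sqrt{2\pi k}},
\]
which is the asserted bound; since the right-hand side tends to $0$ for fixed $x>1$, the asymptotic equivalence ${\rm Li}_{-k}(1/x)\sim I(k,x)$ follows.

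I expect the main obstacle to be the sum-versus-integral comparison with the clean constant $1$ rather than a larger constant: the naïve telescoping over-counts near the peak, and the care indicated above is exactly what guarantees the over-count is only a single extra $M$ — which is the amount the subsequent Stirling estimate for $I(k,x)$ can absorb.
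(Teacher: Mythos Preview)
Your proof is correct and follows the same strategy as the paper: bound $\lvert {\rm Li}_{-k}(1/x)-I(k,x)\rvert$ by the peak value $M=f(k/\log x)=(k/(e\log x))^k$ using the unimodality of $f(t)=t^k/x^t$, then combine with the identical lower bound $I(k,x)>M\sqrt{2\pi k}/x$ obtained from Proposition~\ref{polylogintegralformula} and Lemma~\ref{stirling}. The only difference is presentational: the paper packages the sum--integral comparison as first-order Euler summation, writing ${\rm Li}_{-k}(1/x)=\tfrac{1}{x}+I(k,x)+\int_1^\infty f'(t)(t-\lfloor t\rfloor)\,dt$ and splitting this error integral at $t^\ast=k/\log x$, whereas you carry out the equivalent telescoping comparison directly.
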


\begin{proof}
By Euler's summation formula, we have

\begin{displaymath}
\begin{aligned}
\text{Li}_{-k}\left(\frac{1}{x}\right) &= \frac{1}{x} + \lim_{z\to \infty}\sum_{1<n\le z}\frac{n^k}{x^n}\\
& = \frac{1}{x} + \lim_{z\to \infty}\left(\int_1^z \frac{t^k}{x^t}~dt + \int_1^z \frac{t^{k-1}(k - t\log x)}{x^t}(t-\lfloor t\rfloor)~dt + \frac{z^k}{x^z}(\lfloor z\rfloor - z)\right)\\
& = \frac{1}{x} + I(k,x) + \int_1^\infty \frac{t^{k-1}(k - t\log x)}{x^t}(t-\lfloor t\rfloor)~dt.
\end{aligned}
\end{displaymath}
We now bound the integral on the right and show that it is $o(I(k,x))$ as $k\to \infty$ by splitting the interval at $k/\log x$.  We have $0\le t-\lfloor t\rfloor < 1 $, so the integrand is nonnegative for $1\le t\le k/\log x$ and nonpositive for $t\ge k/\log x$.  Letting $f(t) = t^k/x^t$, we thus bound

\begin{displaymath}
0\le \int_1^{k/\log x} \frac{t^{k-1}(k - t\log x)}{x^t}(t-\lfloor t\rfloor)~dt \le \int_1^{k/\log x} f'(t)~dt = f\left(\frac{k}{\log x}\right) -\frac{1}{x}
\end{displaymath}

and 

\begin{displaymath}
\begin{aligned}
0\ge \int_{k/\log x}^\infty \frac{t^{k-1}(k - t\log x)}{x^t}(t-\lfloor t\rfloor)~dt &= -\int_{k/\log x}^\infty -\frac{t^{k-1}(k - t\log x)}{x^t}(t-\lfloor t\rfloor)~dt\\
&\ge -\int_{k/\log x}^\infty -f'(t) dt = -f\left(\frac{k}{\log x}\right).
\end{aligned}
\end{displaymath}

Therefore,
\begin{displaymath}
\left|\int_1^\infty \frac{t^{k-1}(k - t\log x)}{x^t}(t-\lfloor t\rfloor)~dt\right| \le f\left(\frac{k}{\log x}\right) = \frac{\left(\frac{k}{\log x}\right)^k}{x^{k/\log x}} = \frac{\left(\frac{k}{e}\right)^k}{\log^k x},
\end{displaymath}
noting that $x^{k/\log x} = e^k$.  On the other hand,
\begin{displaymath}
 I(k,x) > \frac{k!}{x\log^k x} > \frac{\left(\frac{k}{e}\right)^k\sqrt{2\pi k}}{x\log^k x} 
\end{displaymath}
by Proposition \ref{polylogintegralformula} and the lower bound of Lemma \ref{stirling}.  Thus $$\frac{\left(\frac{k}{e}\right)^k}{\log^k x}\Big/ I(k,x) < \frac{x}{\sqrt{2\pi k}}.$$
Noting that for the upper bound we may cancel out the two terms $1/x$ and $-1/x$, we complete the proof of Proposition \ref{Sumvsintegral}.
\end{proof}

\section*{Acknowledgments}
We thank Ofir Gorodetsky for sketching an alternate proof of the asymptotic relation in Theorem \ref{mainthm} via the von Mangoldt explicit formula. We are also grateful to Michael Morris and Samanthak Thiagarajan for helpful discussions involving polylogarithms. The third author was supported in part by a G-Research grant, and a Clarendon Scholarship at the University of Oxford.

\bibliographystyle{plain}

\end{document}